\newcolumntype{?}{!{\vrule width 1.5pt}}
\newcommand{\tr}{\mathrm{tr}}
\newcommand{\Smnp}{{\mathcal{S}_M(n,p)}}
\newcommand{\bb}[1]{\mathbb{#1}}
\newcommand{\ca}[1]{\mathcal{#1}}
\newcommand{\Rnp}{{\mathbb{R}^{n\times p}}}
\newcommand{\tp}{^\top}
\newcommand{\inner}[1]{\left\langle #1 \right\rangle}
\newcommand{\norm}[1]{\left\Vert#1\right\Vert}
\newcommand{\A}{\mathcal{A}}
\newcommand{\mybrace}[1]{\left(#1\right)}
\newcommand{\sigmamax}[1]{\sigma_{\max}\left(#1\right)}
\newcommand{\sigmamin}[1]{\sigma_{\min}\left(#1\right)}
\newtheorem{theorem}{Theorem}[section]
\newtheorem{definition}[theorem]{Definition}
\newtheorem{proposition}[theorem]{Proposition} 
\newtheorem{assumption}[theorem]{Assumption}
\newtheorem{lemma}[theorem]{Lemma}
\newtheorem{example}[theorem]{Example}
\newtheorem{remark}[theorem]{Remark}
\newtheorem{corollary}[theorem]{Corollary}
\newcommand{\st}{\mathrm{s.~t.}}
\tikzstyle{kuang} = [rectangle, rounded corners, text centered, draw = black]
\begin{document}
     \title{Stochastic optimization over expectation-formulated generalized Stiefel manifold}
\author{Linshuo Jiang\thanks{State Key Laboratory of Scientific and Engineering Computing, Academy of Mathematics and Systems Science, Chinese Academy of Sciences, and University of Chinese Academy of Sciences, China (e-mail: jianglinshuo@lsec.cc.ac.cn)}, ~ 
	Nachuan Xiao\thanks{The Institute of Operations Research and Analytics, National University of Singapore, Singapore. (e-mail: xnc@lsec.cc.ac.cn).},
	~ 
	Xin Liu\thanks{State Key Laboratory of Scientific and Engineering Computing, Academy of Mathematics and Systems Science, Chinese Academy of Sciences, and University of Chinese Academy of Sciences, China (e-mail: liuxin@lsec.cc.ac.cn)}}
     	\maketitle
      \begin{abstract}

In this paper, we consider a class of stochastic optimization problems over the expectation-formulated generalized Stiefel manifold \eqref{sogse}, where the objective function $f$ is continuously differentiable.
We propose a novel constraint dissolving penalty function with a customized penalty term \eqref{cdfcp}, which maintains the same order of differentiability as $f$. Our theoretical analysis establishes the global equivalence between \ref{cdfcp} and  \ref{sogse}, in the sense that they share the same first-order and second-order stationary points under mild conditions. These results on equivalence enable the direct implementation of various stochastic optimization approaches to solve \ref{sogse}. In particular, we develop a stochastic gradient algorithm and its accelerated variant by incorporating an adaptive step size strategy. Furthermore, we prove their $\mathcal{O}(\varepsilon^{-4})$ sample complexity for finding an $\varepsilon$-stationary point of \ref{cdfcp}. Comprehensive numerical experiments show the efficiency and robustness of our proposed algorithms.

  \textbf{Keywords:} generalized Stiefel manifold, penalty function, expectation constraints, nested stochastic optimization, stochastic gradient methods
      \end{abstract}
 \section{Introduction}\label{intro}
     In this paper, we focus on the following stochastic optimization problem with expectation constraints, 
    	\begin{equation}\tag{SOEGS}\label{sogse}
    	\begin{aligned}
    	\min_{X\in\mathbb{R}^{n\times p}}\;\;\; &f(X) := \bb{E}_{\xi\sim \Xi}[f_{\xi}(X)],\\
    		\st\;\;\; &X^\top \bb{E}_{\theta \sim \Theta}[M_{\theta}] X=I_p,
    	\end{aligned}
    \end{equation}
where $I_p$ refers to the $p$-by-$p$ identity matrix with $p\le n$, $f_{\xi}: \mathbb{R}^{n\times p} \to \mathbb{R}$ is a possibly nonconvex function parameterized by the random vector $\xi$ following some unknown distribution $\Xi$, and $M_{\theta} \in \mathbb{R}^{n\times n}$ is a matrix depending on the random vector $\theta$ following certain unknown distribution $\Theta$. Throughout this paper, we make the following assumptions on problem \ref{sogse}. 
\begin{assumption}[Blanket assumptions]\label{as:blanket}
        \begin{enumerate}[(1)]
            \item  The objective function $f(X)$ is differentiable and its gradient $\nabla f(X)$ is globally $L_g$-Lipschitz continuous over $\Rnp$. 
            \item  For almost every $\xi \in \Xi$, $f_{\xi}$ is differentiable. Moreover, it holds for any $X \in \Rnp$ that 
            $\bb{E}_{\xi\sim \Xi}[\nabla f_{\xi}(X)] = \nabla f(X).$
            \item The matrix $M := \bb{E}_{\theta \sim \Theta}[M_{\theta}]\in\mathbb{R}^{n\times n}$ is positive definite. 
        \end{enumerate}
        
\end{assumption}

It is worth mentioning that  the feasible region of \ref{sogse}, denoted as  $\Smnp:=\{X\in\mathbb{R}^{n\times p}:X\tp M X=I_p\}$, can be regarded as an embedded submanifold of $\Rnp$. Such manifold is usually referred to as the generalized Stiefel manifold \cite{absil_optimization_2008,boumal_manopt_2014,shustin_preconditioned_2021,sato_cholesky_2019}.

The optimization problem \ref{sogse} has a wide range of applications in scientific computing, data science and statistics \cite{bendory_nonconvex_2016,kasai_riemannian_2018,gao_orthogonalization-free_2022}, including the generalized canonical correlation analysis (GCCA) \cite{kettenring_canonical_1971,sorensen_generalized_2021,gao_sparse_2021} and the linear discriminant analysis (LDA) \cite{chen_sparse_2013,luo_linear_2011}. We present a brief introduction to GCCA problem in the following. 

\begin{example}[GCCA problem]\label{pr:1}
 Given $m\in\mathbb{N}$ $(m\ge2)$ random vectors $\theta_1 \in \bb{R}^{n_1}$, $\theta_2 \in \bb{R}^{n_2}$, ... $\theta_m \in \bb{R}^{n_m}$, the GCCA problem aims to find a linear projection of these random vectors to a low-dimensional subspace $\mathbb{R}^p$, where $p \leq \min \{n_i,\;1\le i\le m\}$, such that the overall correlation of the projected vectors is maximized. According to \cite{kettenring_canonical_1971,gao_sparse_2021}, the GCCA problem can be formulated as the following optimization problem, 
\begin{equation}\label{p:GCCA}
    \begin{aligned}   
		&&\min_{X\in\mathbb{R}^{n\times p}}\;\;\;&-\sum_{r=1}^p\sum_{\substack{i,j=1\\i\neq j}}^m c_{ij} g\left({X^{[i]}_r}^\top \mathbb{E}[\mathcal{\theta}_i\mathcal{\theta}_j^\top]X^{[j]}_r\right),\\
   &&\st\;\;\;& X^\top MX=I_p,
   \end{aligned}
\end{equation}
where $n:=\sum_{i = 1}^m n_i$, $X^{[i]}\in\mathbb{R}^{n_i\times p}$ ($1\le i\le m$) is a submatrix of $X$ consisting of rows from $(\sum_{j=1}^{i-1} n_j)+1$ to $\sum_{j=1}^{i} n_j$, and $X^{[i]}_r$ denotes the $r$-th column of $X^{[i]}$ for $r=1,\dots,p$. Moreover, $M\in\mathbb{R}^{n\times n}$ is an expectation-formulated matrix defined by
\[M:= \mathbb{E}\left[\begin{pmatrix}
	 \mathcal{\theta}_1\mathcal{\theta}_1^\top&&&\\ 
	  & \mathcal{\theta}_2\mathcal{\theta}_2^\top&&\\
    &&\ddots&\\
	   &&&\mathcal{\theta}_m\mathcal{\theta}_m^\top\\
   \end{pmatrix}\right],\]
and $g: \bb{R} \to \bb{R}$ refers to the merit function. Popular choices of $g$ include the identity mapping $g(t)=t$ \cite{horst1961generalized} and the absolute value function $g(t)=|t|$ \cite{steel1951minimum}. The latter one is nonsmooth and a widely used smooth approximation for it is the Huber loss function \cite{huber1992robust}, defined as 
             \[g(t):=\left\{\begin{aligned}
           &\frac{1}{2}t^2, &&|t|\le\mu,\\
           &\mu\left(|t|-\frac{1}{2}\mu\right),&&|t|>\mu,
        \end{aligned}\right.\]
        where $\mu>0$ refers to the Huber threshold parameter.

 It is worth mentioning that when we choose $m=2$ and $g(t)=t$, the model \eqref{p:GCCA} yields the following classical formulation of canonical correlation analysis (CCA) \cite{hotelling_cca_1936, hardoon_canonical_2004, arora_cca_2017},
\begin{equation}\label{eq:classcca}
     \begin{aligned}	&&\min_{\substack{X^{[1]}\in\mathbb{R}^{n_1\times p},\\X^{[2]}\in\mathbb{R}^{n_2\times p}}}\;\;\;&-\frac{1}{2}\operatorname{tr}\mybrace{{X^{[1]}}^\top\mathbb{E}\left[\theta_1\theta_2^\top
\right] X^{[2]}},\\
	   &&\st\;\;\;& {X^{[1]}}^\top \mathbb{E}[
         \theta_1\theta_1^\top] X^{[1]}=I_p,\;{X^{[2]}}^\top \mathbb{E}[
         \theta_2\theta_2^\top] X^{[2]}=I_p.
	   \end{aligned}
\end{equation}
\end{example}


\subsection{Related works}\label{existwork}

For nonlinear programming problems where the feasible region admits a differentiable manifold structure,
a great number of existing algorithms fall into the category of Riemannian optimization approaches \cite{absil_optimization_2008,boumal2023introduction},   including Riemannian gradient methods \cite{abrudan_steepest_2008,iannazzo_r_2018}, Riemannian conjugate gradient methods \cite{sato_new_2015,sato_riemannian_2022}, Riemannian trust-region methods \cite{absil_trust_2007,baker_riemannian_2008}, Riemannian Newton methods \cite{adler_newton_2002}, Riemannian quasi-Newton methods \cite{huang_broyden_2015}, etc. Typically, as mentioned in \cite{absil_trust_2007,absil_optimization_2008}, these methods typically operate through an iterative process: they first ``lift'' the objective function from the manifold to the tangent space at the current iterate, which provides a local linear approximation of the underlying manifold. Subsequently, they execute one step of the unconstrained optimization algorithm within this tangent space, followed by ``pulling'' the resulting point back onto the manifold to generate the next iterate. These ``lift to" and ``pull back" operations \cite{absil_trust_2007,absil_optimization_2008} are implemented via exponential mapping or, for computational efficiency, through its relaxed alternatives known as retractions.  Interested readers could refer to the books \cite{absil_optimization_2008,boumal2023introduction} and a recent survey paper \cite{hu2020brief} for more details.

 However, these approaches necessitate the noiseless evaluations of differential geometric components of the underlying manifold, such as the tangent spaces and the retractions. This requirement presents a significant challenge for the generalized Stiefel manifold in problem \ref{sogse} as its expectation formulation makes these geometric components computationally intractable. As a result, all these aforementioned Riemannian optimization approaches cannot be directly applied to solve \ref{sogse}.

 For expectation-constrained optimization problems, several methods have been proposed, including proximal-point-based methods   \cite{ma2019proximally, Boob2022stochastic} and the inexact augmented Lagrangian method \cite{li2022stochastic}. However, these methods rely on critical theoretical assumptions (cf. \cite[Assumption 1B]{ma2019proximally}, \cite[Assumption 3.3]{Boob2022stochastic}, and \cite[Assumption 3]{li2022stochastic}) that fail to hold in the context of \ref{sogse}, hence making them inapplicable to \ref{sogse}.

Very recently,  for equality-constrained optimization problems, \cite{xiao_constraint_2022} presents a novel penalty function, named the constraint dissolving function \eqref{cdf}. When applied to problem \ref{sogse}, a possible choice of \ref{cdf} can be expressed as,
\begin{equation}\label{cdf}\tag{CDF}
   f(\mathcal{A}(X))+\frac{\beta}{4}\|X^\top MX-I_p\|_F^2.
\end{equation}
Here, $\mathcal{A}(X):=X\left(\frac{3}{2}I_p-\frac{1}{2}X^\top MX\right)$ refers to the constraint dissolving operator, the second term is the penalty term, and $\beta>0$ is the penalty parameter. With a sufficiently large parameter $\beta$, \ref{cdf} and \ref{sogse} share the same stationary points and local minimizers within a neighborhood of the feasible region $\ca{S}_{M}(n,p)$, as demonstrated in \cite[Theorem 4.5]{xiao_constraint_2022}. This implies that the iterates should be restricted in this neighborhood to ensure that the algorithm for minimizing \ref{cdf} theoretically converges to the critical points of \ref{sogse}. Moreover, a similar requirement is also imposed in the convergence of the landing method for solving problem \ref{sogse}  \cite[Theorem 2.9]{vary2024optimization}. However, controlling the distance between iterates and the feasible region becomes challenging when $M$ takes expectation formulation, as the noiseless evaluation of the feasibility is unavailable. Therefore, these aforementioned algorithms cannot be applied to solve \ref{sogse} with global convergence guarantees. To the best of our knowledge, the efficient optimization approach for solving \ref{sogse} with global convergence guarantees remains undiscovered.

\subsection{Motivation}

As Assumption \ref{as:blanket} assumes the global Lipschitz continuity of $\nabla f(X)$, the growth rate of $f(\A(X))$ can be estimated by $\limsup_{\norm{X} \to +\infty} |f(\A(X))|/ \norm{X}^6 < +\infty$. Therefore, by choosing a specific penalty term for the constraints $X\tp MX = I_p$, the resulting penalty function can be bounded from below.
Specifically,  we consider the following constraint dissolving penalty function with a customized sixth-order penalty term. 
 \begin{equation}\tag{CDFCP}\label{cdfcp}
 \begin{aligned}    h(X):={}&f(\mathcal{A}(X))+\frac{\beta}{6} \mathrm{tr}\left( X\tp M X((X\tp MX)^2 - 3I_p) \right). 
 \end{aligned}
\end{equation}
As we will show later, due to the introduction of this sixth-order penalty term, all the first-order stationary points of \ref{cdfcp} are restricted within a neighborhood of $\ca{S}_{M}(n, p)$ as the parameter $\beta$ is sufficiently large (see Proposition \ref{pro:sigma_up} in Section \ref{subsec:fo}). Consequently, although the iterates may travel far away from $\ca{S}_{M}(n, p)$ when employing stochastic optimization approaches in minimizing \ref{cdfcp}, these iterates eventually stabilize within a neighborhood of $\ca{S}_{M}(n, p)$.

\subsection{Contributions}\label{contri}
In this paper, we develop an novel penalty function named \ref{cdfcp} for stochastic optimization problems over the expectation-formulated generalized Stiefel manifold \eqref{sogse}. 
First, we prove that the minimization of \ref{cdfcp} is globally equivalent to problem \ref{sogse}. 
Specifically, we show that all first-order stationary point of \ref{cdfcp} are either first-order stationary points of \ref{sogse}, or strict saddle points of \ref{cdfcp}.  Furthermore, we prove that \ref{cdfcp} shares the same second-order stationary points with \ref{sogse}.
Notice that minimizing \ref{cdfcp} can be regarded as a nested optimization problem \cite{wang2017stochastic}.
These theoretical results enable the direct implementations of various existing nested stochastic optimization approaches to solve \ref{sogse} via \ref{cdfcp}.

Particularly, we propose a stochastic gradient algorithm for solving \ref{sogse} by employing the inner function value tracking technique to deal with the nested expectation structure emerging in \ref{cdfcp}. Instead of storing an $n$-by-$n$ matrix to track the expectation matrix $M$, our proposed algorithm only requires the storage of a $p$-by-$p$ matrix to track the value of $X^\top MX$, which significantly reduces its memory cost in solving \ref{sogse}. Additionally, our proposed algorithm only requires matrix multiplications, hence can be efficiently implemented in practice. We establish the $O(\varepsilon^{-4})$ sample complexity of this algorithm for reaching an $\varepsilon$-stationary point of \ref{cdfcp}. By incorporating an adaptive step size strategy, we also develop its accelerated variant, which maintains the same sample complexity while demonstrating faster practical convergence.
Finally, we conduct extensive numerical experiments on the GCCA problem to demonstrate the efficiency of our proposed algorithms. The experimental results illustrates that our proposed algorithms exhibit superior performance compared to existing approaches.

\subsection{Organization}\label{org}

The rest of this paper is organized as follows. Section \ref{sec:prelimi}
 introduces preliminary materials, including the notations used throughout this paper and the optimality conditions for \ref{sogse} and \ref{cdfcp}, respectively. In Section \ref{equivalence}, we analyze the relationships between \ref{sogse} and \ref{cdfcp}. Section \ref{algorithm} presents a stochastic gradient algorithm and its accelerated variant for solving \ref{sogse}, together with a detailed analysis of their sample complexities. In Section \ref{numerical}, we present numerical experiments to demonstrate the efficiency and robustness of our proposed algorithms through comparisons with state-of-the-art stochastic algorithms for solving the GCCA problem. We conclude our paper in the last section.

\section{Preliminaries}\label{sec:prelimi}

In this section, we present the basic notations used throughout this paper and introduce some necessary definitions for our theoretical analysis.

\subsection{Notations}\label{notation}
In this paper, we adopt the following notations.   Given a matrix $A$, we denote its $i,j$-th element as $A(i,j)$. We use $\sigma_{\max}(A)$ and $\sigma_{\min}(A)$ to denote its largest and smallest singular values, respectively, and hence $\kappa(A) := \frac{\sigmamax{M}}{\sigmamin{M}}$  stands for the condition number of $A$.  Furthermore, for any square matrix $A$, we denote its trace as $\operatorname{tr}(A)$, and define its symmetrization as $\Psi(A):= \frac{1}{2}(A + A^\top)$. For two matrices $X,Y$ with the same order, the Euclidean inner product is defined as $\langle X,Y\rangle:=\operatorname{tr}(X^\top Y)$, and $X\odot Y$, $(X)^{\odot 2}$ refer to the element-wise multiplication and the element-wise square, respectively.  $\|X\|$ represents the Frobenius norm of a matrix $X$.  Given a differentiable function $q(X):\Rnp\to\mathbb{R}$, the $i,j$-th entry of its gradient $\nabla q(X)$ is represented by $\nabla_{i,j} q(X)$.
  $\mathbb{S}^p$ stands for the set of all symmetric matrices in $\mathbb{R}^{p\times p}$. We denote $\mathcal{T}_{\Smnp}(X)$ as the tangent space of $\mathcal{S}_M(n,p)$ at $X$, which is given by
\begin{equation}\label{eq:tx}
    \mathcal{T}_{\Smnp}(X):= \left\{D\in\mathbb{R}^{n\times p}| \Psi(D^\top M X) = 0\right\},
\end{equation}
while $\mathcal{N}_{\Smnp}(X)$ refers to the normal space of $\Smnp$ at $X$,
\begin{equation}\label{eq:nx}
    \mathcal{N}_{\Smnp}(X):=\{XS\in\mathbb{R}^{n\times p}|\;S\in\mathbb{S}^p\}. 
\end{equation}
$\Omega$ denotes a neighborhood of $\ca{S}_M(n,p)$ defined as follows
\begin{equation}
    \label{eq:omega}
    \Omega:=\{X\in\Rnp:\sigma_{\max}\left(X\tp MX\right)\le 1\}.
\end{equation}
$\Omega_r$ with $r>0$ represents a class of neighborhoods of $\ca{S}_M(n,p)$ given by 
\begin{equation}\label{eq:omegar}
    \Omega_{r}:=\{X\in\Rnp:\norm{X\tp MX-I_p}\le r\}. 
\end{equation}
For simplicity, we define the function $g(X): \bb{R}^{n\times p} \to \bb{R}$, the mapping $G(X): \bb{R}^{n\times p} \to \bb{R}^{n\times p}$ and $L(X): \bb{R}^{n\times p} \to \bb{R}^{n\times p}$ as follows,
\begin{equation}\label{eq:defgG}
    g(X):=f(\mathcal{A}(X)), \; G(X):=\nabla f(\mathcal{A}(X)),
\end{equation}
\begin{equation}
    \begin{aligned}
        L(X) :={}& \nabla f(X)-MX\Psi({X}^\top \nabla f(X)),
    \end{aligned}
\end{equation}
respectively.

\subsection{Optimality conditions}
We first introduce the optimality conditions for problem \ref{sogse} as follows. 
\begin{definition}\cite{absil_optimization_2008}\label{def:ocso}
We call $X\in\Rnp$ a first-order stationary point of \ref{sogse} if and only if
 \begin{equation}\label{eq:regrad}
 \left\{ \begin{aligned}
     & 0=\nabla f(X)-MX\Psi({X}^\top \nabla f(X)),\\
     &0={X}^\top MX-I_p.
 \end{aligned} \right.   
 \end{equation}
 Furthermore, we call $X\in\Rnp$ a second-order stationary point of \ref{sogse} if and only if $X$ is a first-order stationary point of \ref{sogse} and for any $T\in\mathcal{T}_{\Smnp}(X)$, it holds that 
\[
		\inner{D,M^{-1}\nabla^2 f(X)[T]-T{X}^\top\nabla f(X)}\ge0.
\]
\end{definition}

Then we present the definitions of the optimality conditions for minimizing the penalty function \ref{cdfcp}. 
\begin{definition}\cite{nocedal_numerical_1999}\label{def:occd}
    We call $X\in\Rnp$ a first-order stationary point of  \ref{cdfcp} if and only if $\nabla h(X)=0$. We call $X\in\Rnp$ a second-order stationary point of  \ref{cdfcp} if and only if $X$ is a first-order stationary point of  \ref{cdfcp} and for any $D\in\Rnp$,  
    \[\inner{D,\nabla^2 h(X)[D]}\ge0.\]
Furthermore, we call $X\in\Rnp$ a $c$-saddle point ($c>0$) of \ref{cdfcp}, if and only if $X$ is a first-order stationary point of \ref{cdfcp}, and there is a direction $D\in\Rnp$ such that
\[\inner{D,\nabla^2 h(X)[D]}\le-c\norm{D}^2.\]
\end{definition}

Then, in the following two definitions, we introduce the concept of the $\varepsilon$-first-order stationary points for \ref{sogse} and \ref{cdfcp}, respectively.
\begin{definition}
    We call $X\in\Rnp$ a $\varepsilon$-first-order stationary point of \ref{sogse} if and only if
 \begin{equation}\label{eq:epfos}
 \left\{ \begin{aligned}
     & \|f(X)-MX\Psi({X}^\top \nabla f(X))\|\le\varepsilon,\\
     &\|X\tp MX  - I_p\|\le \varepsilon.
 \end{aligned} \right.   
 \end{equation}
\end{definition}

\begin{definition}
    We call $X\in\Rnp$ a $\varepsilon$-first-order stationary point of \ref{cdfcp} if and only if
$\|\nabla h(X)\|\le \varepsilon$.
\end{definition}

\section{Equivalence}
\label{equivalence}

In this section, we demonstrate the equivalence between \ref{sogse} and \ref{cdfcp} with sufficiently large penalty parameter $\beta$. Specifically, Section \ref{subsec:cdfcp_properties} summarizes some fundamental properties of \ref{cdfcp} that are instrumental for subsequent analysis. Sections \ref{subsec:fo} and \ref{subsec:so} demonstrate the equivalence between \ref{cdfcp} and \ref{sogse} with respect to first-order and second-order stationary points, respectively. Finally, Section \ref{subsec:approxfo} presents their equivalence concerning
$\varepsilon$-first-order stationary points.

\subsection{Basic properties of \ref{cdfcp}}\label{subsec:cdfcp_properties}
We begin our theoretical analysis by providing several basic properties of \ref{cdfcp}. 
The following lemma presents the explicit expression of $\nabla h(X)$. The result in the following lemma straightforwardly follows from the expression of $h(X)$ and thus its proof is omitted for simplicity. 
\begin{lemma}
\label{Le_formulation_gradient_f}
  Suppose Assumption \ref{as:blanket} holds, then for any $X \in \Rnp$, the gradient of $h$ at $X$ can be expressed as 
    \begin{equation}\label{eq:gradh}
        \nabla h(X) = G(X)\left( \frac{3}{2}I_p - \frac{1}{2} X\tp M X \right) - MX\Lambda(X) + \beta MX ( (X\tp M X)^2 - I_p ),
    \end{equation}
    where $\Lambda(X) := \Psi(X\tp G(X))$.
\end{lemma}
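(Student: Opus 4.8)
The plan is to compute $\nabla h(X)$ directly by differentiating the two summands of
\[
h(X) = f(\mathcal{A}(X)) + \frac{\beta}{6}\operatorname{tr}\!\left( X\tp M X\,\big((X\tp M X)^2 - 3I_p\big)\right)
\]
term by term, using the chain rule and the standard identities for derivatives of matrix traces. First I would handle the composite term $g(X) = f(\mathcal{A}(X))$ with $\mathcal{A}(X) = X\left(\tfrac{3}{2}I_p - \tfrac{1}{2}X\tp M X\right)$. Writing $G(X) = \nabla f(\mathcal{A}(X))$, the differential is $\mathrm{d}g = \langle G(X), \mathrm{d}\mathcal{A}(X)\rangle$, and since $\mathrm{d}\mathcal{A}(X) = \mathrm{d}X\left(\tfrac{3}{2}I_p - \tfrac{1}{2}X\tp M X\right) - \tfrac{1}{2}X\,\Psi\big(2\,\mathrm{d}X\tp M X\big)\cdot(\text{careful symmetrization})$, collecting the coefficient of $\mathrm{d}X$ yields $G(X)\left(\tfrac{3}{2}I_p - \tfrac{1}{2}X\tp M X\right) - M X\,\Psi\big(X\tp G(X)\big)$, which is exactly the first two terms in \eqref{eq:gradh} once we set $\Lambda(X) = \Psi(X\tp G(X))$. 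The symmetry of $X\tp M X$ is what makes the $\tfrac{1}{2}$ factors combine cleanly, and the $\Psi$ appears because $X\tp M X$ enters $\mathcal{A}$ in a way that is symmetric in the perturbation.

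Next I would differentiate the penalty term. Setting $P = X\tp M X \in \mathbb{S}^p$, the term is $\tfrac{\beta}{6}\operatorname{tr}(P^3 - 3P) = \tfrac{\beta}{6}\big(\operatorname{tr}(P^3) - 3\operatorname{tr}(P)\big)$. Using $\mathrm{d}\operatorname{tr}(P^3) = 3\operatorname{tr}(P^2\,\mathrm{d}P)$ (valid because $P$ is symmetric, so the three cyclic terms coincide) and $\mathrm{d}P = \mathrm{d}X\tp M X + X\tp M\,\mathrm{d}X$, one gets $\mathrm{d}\big[\tfrac{\beta}{6}\operatorname{tr}(P^3-3P)\big] = \tfrac{\beta}{2}\operatorname{tr}\big((P^2 - I_p)\,\mathrm{d}P\big) = \beta\,\langle M X (P^2 - I_p), \mathrm{d}X\rangle$, where the last step uses $P^2 - I_p \in \mathbb{S}^p$ to absorb the symmetrization of $\mathrm{d}P$. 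This contributes $\beta M X\big((X\tp M X)^2 - I_p\big)$, matching the third term in \eqref{eq:gradh}. Summing the two contributions gives the claimed formula.

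The only mild subtlety — and the place where a careless computation would go wrong — is tracking the symmetrizations: in both the $\mathcal{A}$-term and the penalty term, a perturbation $\mathrm{d}X$ enters $X\tp M X$ twice (once through $\mathrm{d}X\tp M X$ and once through $X\tp M\,\mathrm{d}X$), and one must verify that the coefficient matrices multiplying these (namely $G(X)$-related quantities and $P^2 - I_p$) are such that the two pieces can be merged using $\langle A, \mathrm{d}X\tp B\rangle = \langle B A\tp, \mathrm{d}X\rangle$ together with symmetry. Since $X\tp M X$, $(X\tp M X)^2$, and $I_p$ are all symmetric, this merging is automatic for the penalty term, and for the $g$-term it is precisely what produces the $\Psi(\cdot)$ in $\Lambda(X)$. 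Because the excerpt explicitly states that this lemma's proof "straightforwardly follows from the expression of $h(X)$," I would present exactly this differentiation as outlined and omit the fully expanded index-level bookkeeping; there is no genuine obstacle here, only the need for care with the chain rule and the inner-product transpose identities.
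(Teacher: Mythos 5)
Your computation is correct and is exactly the direct chain-rule differentiation that the paper has in mind; the paper omits the proof entirely, stating that it ``straightforwardly follows from the expression of $h(X)$,'' and your term-by-term treatment of $f(\mathcal{A}(X))$ and the trace penalty recovers all three terms of \eqref{eq:gradh} correctly, including the symmetrization producing $\Lambda(X)$. (One trivial remark: the identity $\mathrm{d}\,\mathrm{tr}(P^3)=3\,\mathrm{tr}(P^2\,\mathrm{d}P)$ follows from cyclicity of the trace alone, not from the symmetry of $P$, though symmetry of $P^2-I_p$ is indeed what you need to merge the two pieces of $\mathrm{d}P$.)
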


Lemma \ref{Le_formulation_gradient_f} illustrates that the computation of $\nabla h(X)$ only contains matrix multiplications and hence can be computed efficiently in practice.

Then the following auxiliary lemma presents some basic properties of $\nabla h(X)$. 
\begin{lemma}
    \label{Le_inner_nablah_XQ}
    Suppose Assumption \ref{as:blanket} holds, then for any $X \in \Rnp$ and any matrix $Q \in \mathbb{S}^p$ satisfying $QX\tp MX = X\tp MX Q$, it holds that 
    \begin{equation}\label{eq:innergrah}
         \inner{XQ, \nabla h(X)} = \mathrm{tr}\left( \left(\beta X\tp MX (X\tp MX + I_p) - \frac{3}{2}\Lambda(X) \right)(X\tp MX - I_p)Q  \right).
    \end{equation}
\end{lemma}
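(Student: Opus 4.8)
The plan is to substitute the explicit formula \eqref{eq:gradh} for $\nabla h(X)$ into the inner product $\inner{XQ, \nabla h(X)}$ and simplify term by term, using the commutativity hypothesis $QX\tp MX = X\tp MXQ$ and cyclicity of the trace. First I would write
\[
\inner{XQ,\nabla h(X)} = \inner{XQ, G(X)\left(\tfrac{3}{2}I_p - \tfrac12 X\tp MX\right)} - \inner{XQ, MX\Lambda(X)} + \beta\inner{XQ, MX((X\tp MX)^2 - I_p)}.
\]
For the middle term, $\inner{XQ, MX\Lambda(X)} = \tr(Q\tp X\tp MX\Lambda(X)) = \tr(X\tp MXQ\Lambda(X))$ since $Q$ is symmetric; this is already in terms of $X\tp MX$, $Q$, and $\Lambda(X)$. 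For the last term, $\inner{XQ, MX((X\tp MX)^2 - I_p)} = \tr(Q X\tp MX((X\tp MX)^2 - I_p)) = \tr(X\tp MX(X\tp MX + I_p)(X\tp MX - I_p)Q)$, where I use the factorization $(X\tp MX)^2 - I_p = (X\tp MX + I_p)(X\tp MX - I_p)$ together with the commutativity of $Q$ with $X\tp MX$ to move $Q$ to the right. This already matches the $\beta$-part of the target expression.

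The first term is the one requiring care. Writing $B := X\tp MX$ for brevity (so $\Lambda(X) = \Psi(X\tp G(X))$), I would expand
\[
\inner{XQ, G(X)\left(\tfrac32 I_p - \tfrac12 B\right)} = \tr\!\left(Q\tp X\tp G(X)\left(\tfrac32 I_p - \tfrac12 B\right)\right) = \tfrac32\tr(QX\tp G(X)) - \tfrac12\tr(QX\tp G(X)B).
\]
The key observation is that $\tr(QX\tp G(X)) = \tr(Q\,\Psi(X\tp G(X))) = \tr(Q\Lambda(X))$, because $Q$ is symmetric and the trace of a product of a symmetric matrix with any matrix equals the trace of the symmetric matrix with the symmetrization of the other. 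The same trick applied after using commutativity of $Q$ and $B$ turns $\tr(QX\tp G(X)B) = \tr(BQX\tp G(X)) = \tr(QBX\tp G(X))$ into $\tr(QB\,\Psi(X\tp G(X)))$? — here I must be careful: $QB$ need not be symmetric even though $Q$ and $B$ separately are, so I would instead symmetrize using the identity $\tr(SAB') $ manipulations only when the prefactor is genuinely symmetric. The cleaner route is to note $QB = BQ$ is symmetric (product of two commuting symmetric matrices is symmetric), so $\tr(QBX\tp G(X)) = \tr((QB)\Psi(X\tp G(X))) = \tr(QB\Lambda(X)) = \tr(Q\Lambda(X)B)$ using commutativity once more. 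Collecting, the first term becomes $\tfrac32\tr(Q\Lambda(X)) - \tfrac12\tr(Q\Lambda(X)B)$, and combining with the middle term $-\tr(BQ\Lambda(X)) = -\tr(QB\Lambda(X))$ I get $\tfrac32\tr(Q\Lambda(X)) - \tfrac32\tr(Q\Lambda(X)B) = -\tfrac32\tr(\Lambda(X)(B - I_p)Q)$, again moving $Q$ past $B$.

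Adding the $\beta$-part gives exactly the right-hand side of \eqref{eq:innergrah}. The main obstacle, as flagged above, is the bookkeeping of symmetry: $X\tp G(X)$ is not symmetric, only its symmetrization $\Lambda(X)$ is, so every step that replaces $X\tp G(X)$ by $\Lambda(X)$ inside a trace must be justified by having a genuinely symmetric matrix (namely $Q$ or $QB = BQ$) as the adjacent factor, and this is where the commutativity hypothesis $QX\tp MX = X\tp MXQ$ is essential rather than cosmetic. Once the symmetrization steps are correctly licensed, the remaining algebra is just trace cyclicity and the factorization of $(X\tp MX)^2 - I_p$.
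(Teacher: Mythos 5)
Your proposal is correct and follows essentially the same route as the paper's proof: substitute the explicit formula \eqref{eq:gradh} for $\nabla h(X)$, use cyclicity of the trace together with the symmetry of $Q$ (and of the commuting product $QX\tp MX$) to replace $X\tp G(X)$ by $\Lambda(X)$ inside the traces, and factor $(X\tp MX)^2 - I_p = (X\tp MX + I_p)(X\tp MX - I_p)$. Your explicit flagging of which symmetrization steps are licensed by the commutativity hypothesis is exactly the point the paper's proof relies on (via the symmetry of $\mybrace{\frac{3}{2}I_p - \frac{1}{2}X\tp MX}Q$), so no gap remains.
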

\begin{proof}
    For any $X \in \Rnp$ and any $Q\in \mathbb{S}^p$ that satisfies $QX\tp MX = X\tp MX Q$, it is easy to verify that $X\tp MXQ$ is a symmetric matrix. Then by the expression of $\nabla h(X)$ in 
    \eqref{eq:gradh}, we have
    \[\begin{aligned}
        &\inner{XQ,\nabla h(X)}\\
        \overset{}{=}{}&\left\langle XQ,G(X)\mybrace{\frac{3}{2}I_p-\frac{1}{2}X\tp MX}-MX\Lambda(X)\right.
        \left.+\beta MX\mybrace{\mybrace{X\tp MX}^2-I_p}\right\rangle\\
        ={}&\tr\mybrace{Q X\tp G(X)\mybrace{\frac{3}{2}I_p-\frac{1}{2}X\tp MX}}-\tr\mybrace{Q X\tp MX\Lambda(X)}\\
        &+\beta\tr\mybrace{Q X\tp MX\mybrace{\mybrace{X\tp MX}^2-I_p}}\\
        \overset{(i)}{=}{}&\tr\mybrace{\Lambda(X)\mybrace{\frac{3}{2}I_p-\frac{1}{2}X\tp MX}Q}-\tr\mybrace{\Lambda(X)X\tp MXQ}\\
        &+ \beta\tr\mybrace{X\tp MX\mybrace{X\tp MX+I_p}\mybrace{X\tp MX-I_p}Q}\\
        ={} &\mathrm{tr}\left( \left(\beta X\tp MX \mybrace{X\tp MX + I_p} - \frac{3}{2}\Lambda(X) \right)\mybrace{X\tp MX + I_p}Q  \right),
    \end{aligned}\]
    where the equation $(i)$ follows from the fact that $\mybrace{\frac{3}{2}I_p-\frac{1}{2}X\tp MX}Q$ is symmetric. This completes the proof. 
    
\end{proof}

According to Assumption \ref{as:blanket}, $\nabla f$ is globally $L_g$-Lipschitz continuous over $\Rnp$. Let  $L_0 :=\norm{\nabla f(0)}$, then it is easy to verify that 
\begin{equation}\label{eq:normgraf}
    \norm{\nabla f(X)} \leq L_g \norm{X} + L_0
\end{equation}
holds for any $X \in \Rnp$. 
Building on this, we establish an upper bound for the norm of $G(X)$ in the following lemma.
\begin{lemma}\label{le:normg}
    Suppose Assumption \ref{as:blanket} holds, then for any $X\in\Rnp$,  it holds that 
    \begin{equation}\label{eq:normg}
        \norm{G(X)} \leq  L_g\left(\frac{3}{2}  + \frac{1}{2} \sigma_{\max}(X\tp MX)  \right)\norm{X} + L_0.
    \end{equation}
\end{lemma}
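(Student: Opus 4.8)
The plan is to combine the affine growth bound \eqref{eq:normgraf} on $\nabla f$ with a short operator-norm estimate on the constraint dissolving operator $\mathcal{A}$. Since $G(X) = \nabla f(\mathcal{A}(X))$ by \eqref{eq:defgG}, applying \eqref{eq:normgraf} at the point $\mathcal{A}(X)$ gives immediately
\[
\norm{G(X)} \le L_g\norm{\mathcal{A}(X)} + L_0,
\]
so the whole problem reduces to bounding $\norm{\mathcal{A}(X)} = \norm{X\left(\tfrac{3}{2}I_p - \tfrac{1}{2}X\tp MX\right)}$ in terms of $\norm{X}$ and $\sigma_{\max}(X\tp MX)$.

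For that step I would use the mixed-norm submultiplicativity inequality $\norm{AB} \le \norm{A}\,\sigma_{\max}(B)$ (Frobenius times spectral), which yields $\norm{\mathcal{A}(X)} \le \norm{X}\,\sigma_{\max}\!\left(\tfrac{3}{2}I_p - \tfrac{1}{2}X\tp MX\right)$. The remaining scalar estimate follows from the triangle inequality for the spectral norm, $\sigma_{\max}\!\left(\tfrac{3}{2}I_p - \tfrac{1}{2}X\tp MX\right) \le \tfrac{3}{2}\sigma_{\max}(I_p) + \tfrac{1}{2}\sigma_{\max}(X\tp MX) = \tfrac{3}{2} + \tfrac{1}{2}\sigma_{\max}(X\tp MX)$; alternatively, since $M$ is positive definite by Assumption \ref{as:blanket}(3) the matrix $X\tp MX$ is positive semidefinite, so the eigenvalues of $\tfrac{3}{2}I_p - \tfrac{1}{2}X\tp MX$ lie in $\left[\tfrac{3}{2} - \tfrac{1}{2}\sigma_{\max}(X\tp MX),\, \tfrac{3}{2}\right]$, giving the same bound. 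Chaining the two estimates gives $\norm{\mathcal{A}(X)} \le \left(\tfrac{3}{2} + \tfrac{1}{2}\sigma_{\max}(X\tp MX)\right)\norm{X}$, and substituting into the first display produces \eqref{eq:normg}.

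There is no genuine obstacle here; the only points that require (minor) care are invoking the correct mixed-norm inequality $\norm{AB}_{\mathrm F} \le \norm{A}_{\mathrm F}\,\sigma_{\max}(B)$ rather than the cruder $\norm{A}_{\mathrm F}\norm{B}_{\mathrm F}$ bound (which would not reproduce the stated constant), and keeping the roles of the two norms straight, since $\norm{\cdot}$ denotes the Frobenius norm throughout while $\sigma_{\max}(\cdot)$ is the spectral norm. The growth estimate \eqref{eq:normgraf} has already been established just before the lemma, so it can be quoted directly without further argument.
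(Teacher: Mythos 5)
Your proposal is correct and follows essentially the same route as the paper's proof: apply the growth bound \eqref{eq:normgraf} at the point $\mathcal{A}(X)$ and then estimate $\norm{\mathcal{A}(X)}$ via the mixed Frobenius--spectral submultiplicativity inequality. The paper states the final bound on $\norm{\mathcal{A}(X)}$ without spelling out the intermediate norm manipulations, which you supply explicitly; no difference in substance.
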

\begin{proof}
By the expression of $G(X)$ in \eqref{eq:defgG}, it directly follows from \eqref{eq:normgraf} that 
    \begin{equation}
        \norm{G(X)}=\norm{\nabla f(\A(X))} \leq L_g\norm{\A(X)} + L_0 \leq L_g\left(\frac{3}{2}  + \frac{1}{2} \sigma_{\max}(X\tp MX)  \right)\norm{X} + L_0. 
    \end{equation}
    This completes the proof. 
\end{proof}

In the following, we introduce an additional assumption regarding the second-order differentiability of the objective function $f$, which is assumed to hold only in the context of the Hessian operator of $h$.

\begin{assumption}[The second-order differentiability of $f$]\label{as:s}
      $\nabla^2 f(X)$ exists at every $X\in\mathbb{R}^{n\times p}$.
\end{assumption}

Given Assumption \ref{as:s}, the twice-differentiability of $h$ 
  is straightforward to establish. Specifically, the following lemma shows the explicit expression of the Hessian operator of $h$. 
\begin{lemma}
    \label{Le_formulation_hessian_h}
     Suppose Assumption \ref{as:blanket} and Assumption \ref{as:s} hold, then for any $X,D \in \Rnp$, it holds that
      \begin{equation}\label{eq:hessh}
          \begin{aligned}
      &\nabla^2 h(X)[D]\\
      ={}& \mathcal{J}_G(X)[D]\left(\frac{3}{2}I_p-\frac{1}{2}X\tp MX\right)-G(X) \Psi(X\tp MD)-MD\Psi(X\tp G(X))\\
      &-MX\Psi(D\tp G(X))-MX\Psi(X\tp \mathcal{J}_G(X)[D])\\
      & +\beta MD((X\tp MX)^2-I_p)
      +4\beta MX\Psi(X\tp MD)X\tp MX,
  \end{aligned}
      \end{equation}
      where $\mathcal{J}_G(\cdot)[\cdot] : \Rnp \times \Rnp \to \Rnp $ is defined as
      \begin{equation}\label{eq:GD}
          \mathcal{J}_G(X)[D] : =\nabla^2 f(\mathcal{A}(X))\left[D\left(\frac{3}{2}I_p-\frac{1}{2}X\tp MX\right)-X\Psi(X\tp MD)\right].
      \end{equation}
\end{lemma}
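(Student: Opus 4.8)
The plan is to compute $\nabla^2 h(X)[D]$ directly by differentiating the gradient formula \eqref{eq:gradh} from Lemma \ref{Le_formulation_gradient_f}, term by term, using the directional derivative $\frac{d}{dt}\big|_{t=0}\nabla h(X+tD)$. The gradient has three additive pieces: $G(X)\left(\frac{3}{2}I_p-\frac{1}{2}X\tp MX\right)$, $-MX\Lambda(X)$ with $\Lambda(X)=\Psi(X\tp G(X))$, and the penalty piece $\beta MX((X\tp MX)^2-I_p)$. I would handle each separately, applying the product rule and linearity.

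First I would establish the auxiliary identity that $\frac{d}{dt}\big|_{t=0} G(X+tD) = \mathcal{J}_G(X)[D]$, where $\mathcal{J}_G(X)[D]$ is defined in \eqref{eq:GD}. This follows from the chain rule applied to $G(X)=\nabla f(\mathcal{A}(X))$: the derivative of $\mathcal{A}(X)=X\left(\frac{3}{2}I_p-\frac{1}{2}X\tp MX\right)$ in direction $D$ is $D\left(\frac{3}{2}I_p-\frac{1}{2}X\tp MX\right)-\frac{1}{2}X(D\tp MX+X\tp MD) = D\left(\frac{3}{2}I_p-\frac{1}{2}X\tp MX\right)-X\Psi(X\tp MD)$, and then composing with $\nabla^2 f(\mathcal{A}(X))$ gives the stated expression. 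Also I would record $\frac{d}{dt}\big|_{t=0}(X\tp MX) = D\tp MX + X\tp MD = 2\Psi(X\tp MD)$.

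With these in hand, differentiating term one gives $\mathcal{J}_G(X)[D]\left(\frac{3}{2}I_p-\frac{1}{2}X\tp MX\right) - G(X)\Psi(X\tp MD)$; differentiating term two, $-MX\Lambda(X)$, via the product rule on $M$, $X$, and $\Lambda(X)=\Psi(X\tp G(X))$ (whose derivative is $\Psi(D\tp G(X)+X\tp\mathcal{J}_G(X)[D])=\Psi(D\tp G(X))+\Psi(X\tp\mathcal{J}_G(X)[D])$), gives $-MD\Psi(X\tp G(X)) - MX\Psi(D\tp G(X)) - MX\Psi(X\tp\mathcal{J}_G(X)[D])$; and differentiating the penalty term $\beta MX((X\tp MX)^2-I_p)$ gives $\beta MD((X\tp MX)^2-I_p) + \beta MX\cdot\frac{d}{dt}\big|_{t=0}(X\tp MX)^2$, where the last derivative is $2\Psi(X\tp MD)\cdot X\tp MX + X\tp MX\cdot 2\Psi(X\tp MD)$ — but since we want the matrix expression in the form stated in \eqref{eq:hessh}, I would use the identity that $\beta MX\big[(X\tp MX)(2\Psi(X\tp MD)) + (2\Psi(X\tp MD))(X\tp MX)\big]$ combined appropriately yields $4\beta MX\Psi(X\tp MD)X\tp MX$ — this requires care, and in fact the symmetric form in the statement suggests one should verify that $MX(AB+BA)$ with $A=X\tp MX$, $B=2\Psi(X\tp MD)$ can be rewritten; I expect this penalty-term bookkeeping to be the main obstacle, since getting exactly $4\beta MX\Psi(X\tp MD)X\tp MX$ (rather than a symmetrized combination) relies on a nontrivial rearrangement, and I would double-check whether the stated formula implicitly uses symmetry of $X\tp MX$ or absorbs a term. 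Collecting all pieces and matching against \eqref{eq:hessh} completes the proof; since the paper notes the result "is straightforward to establish," the write-up would present the term-by-term differentiation compactly and defer the routine Frobenius-inner-product manipulations.
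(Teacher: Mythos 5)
Your plan --- differentiating \eqref{eq:gradh} term by term in the direction $D$ --- is exactly the computation the paper has in mind (it omits the proof as ``directly following from the expression of $\nabla h(X)$''), and your treatment of the first two pieces, of $\mathrm{D}\mathcal{A}(X)[D]$, and of the derivative of $\Lambda(X)$ is correct and reproduces the first five terms of \eqref{eq:hessh} as well as \eqref{eq:GD}.

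The concern you flag about the penalty term is well-founded, and you should resolve it in the opposite direction from what the statement suggests: no rearrangement produces the term $4\beta MX\Psi(X\tp MD)X\tp MX$ in general. Writing $K:=X\tp MX$ and $S:=\Psi(X\tp MD)$, your own computation gives the directional derivative of $\beta MX(K^2-I_p)$ as
\begin{equation*}
\beta MD(K^2-I_p)+2\beta MX\left(SK+KS\right),
\end{equation*}
which differs from $\beta MD(K^2-I_p)+4\beta MXSK$ by $2\beta MX(KS-SK)$; since $K$ and $S$ are symmetric but need not commute, this does not vanish for arbitrary $X,D$. A concrete check: take $n=p=2$, $M=I_2$, $f\equiv 0$, $X=\mathrm{diag}(1,2)$, and $D$ the matrix with $D(1,2)=1$ and all other entries zero. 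Expanding $h(X+tD)=\frac{\beta}{6}\left(50+60t^2+\mathcal{O}(t^4)\right)$ gives $\inner{D,\nabla^2 h(X)[D]}=20\beta$, whereas \eqref{eq:hessh} evaluates to $23\beta$. So a correct write-up must keep the symmetrized combination $2\beta MX(SK+KS)$ in place of $4\beta MX\Psi(X\tp MD)X\tp MX$. This discrepancy is harmless for the rest of the paper, because every subsequent application of \eqref{eq:hessh} uses directions for which $K$ and $S$ commute: either $X\in\Smnp$ (so $K=I_p$), or $D=Xvv\tp$ with $v$ a unit eigenvector of $K$ (so $S=\sigma vv\tp$ and $KS=SK=\sigma^2vv\tp$), or $D$ tangent (so $S=0$). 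State the symmetrized form, note that it reduces to the paper's expression under these commutation conditions, and your proof is complete.
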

The proof of Lemma \ref{Le_formulation_hessian_h} directly follows from the expression of $\nabla h(X)$ in \eqref{eq:gradh}, hence is omitted for simplicity. 
In the following lemma, we provide an upper bound for the norm of $\mathcal{J}_G(X)[D]$. 
\begin{lemma}\label{le:normj}
    Suppose Assumption \ref{as:blanket} and Assumption \ref{as:s} hold, then for any $X\in\Omega$, it holds that
    \begin{equation}
        \norm{\mathcal{J}_G(X)[D]} \leq (2+\kappa(M))L_g\norm{D}.
    \end{equation}
\end{lemma}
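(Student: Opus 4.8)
The plan is to bound $\norm{\mathcal{J}_G(X)[D]}$ directly from its definition in \eqref{eq:GD}, which expresses $\mathcal{J}_G(X)[D]$ as $\nabla^2 f(\mathcal{A}(X))$ applied to the matrix $E := D\left(\frac{3}{2}I_p - \frac{1}{2}X\tp MX\right) - X\Psi(X\tp MD)$. First I would record the fact that Assumption \ref{as:blanket}(1) — the global $L_g$-Lipschitz continuity of $\nabla f$ — together with Assumption \ref{as:s} implies that the Hessian operator of $f$ is bounded in operator norm by $L_g$, i.e. $\norm{\nabla^2 f(Y)[E]} \le L_g \norm{E}$ for all $Y, E \in \Rnp$. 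Hence it suffices to show $\norm{E} \le (2 + \kappa(M))\norm{D}$ whenever $X \in \Omega$.

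Next I would bound $\norm{E}$ by the triangle inequality as $\norm{E} \le \norm{D}\cdot\norm{\frac{3}{2}I_p - \frac{1}{2}X\tp MX} + \norm{X\Psi(X\tp MD)}$. For the first term, since $X \in \Omega$ means $\sigmamax{X\tp MX} \le 1$ and $X\tp MX \succeq 0$, all eigenvalues of $\frac{3}{2}I_p - \frac{1}{2}X\tp MX$ lie in $[1, \frac{3}{2}]$, so the operator norm of that factor is at most $\frac{3}{2}$; this gives $\norm{D\left(\frac{3}{2}I_p - \frac{1}{2}X\tp MX\right)} \le \frac{3}{2}\norm{D}$. For the second term, I would write $\norm{X\Psi(X\tp MD)} \le \norm{X}\cdot\norm{\Psi(X\tp MD)} \le \norm{X}\cdot\norm{X\tp MD} \le \norm{X}^2 \norm{M}\norm{D}$, where $\norm{M} = \sigmamax{M}$. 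The point is then to control $\norm{X}^2\sigmamax{M}$ using $X \in \Omega$: from $X\tp MX \preceq I_p$ and $M \succeq \sigmamin{M} I_n$ one gets $\sigmamin{M}\norm{X}\fnorms \le \tr(X\tp MX) \le p \le \norm{\cdot}$... more carefully, $\sigmamin{M}\norm{X}^2 \le \sigmamax{X\tp MX}\cdot(\text{something})$; the clean estimate is $\sigmamin{M}\sigmamax{XX\tp} \le \sigmamax{M^{1/2}XX\tp M^{1/2}} = \sigmamax{X\tp MX} \le 1$, hence $\norm{X}_2^2 = \sigmamax{XX\tp} \le 1/\sigmamin{M}$, and therefore $\norm{X}_2^2\sigmamax{M} \le \kappa(M)$. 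Using this spectral-norm bound in place of the Frobenius norm in the submultiplicative step (i.e. $\norm{X\Psi(X\tp MD)}\fnormf \le \norm{X}_2^2\sigmamax{M}\norm{D}\fnormf \le \kappa(M)\norm{D}$) yields the second term $\le \kappa(M)\norm{D}$.

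Combining, $\norm{E} \le \frac{3}{2}\norm{D} + \kappa(M)\norm{D}$, and then $\norm{\mathcal{J}_G(X)[D]} \le L_g\norm{E} \le \left(\frac{3}{2} + \kappa(M)\right)L_g\norm{D} \le (2 + \kappa(M))L_g\norm{D}$, which is the claim (with a little slack, $\frac{3}{2} \le 2$). I would then just close the proof.

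The main obstacle, and the step to be careful about, is the mixed use of Frobenius versus spectral norms: the naive Frobenius-only chain $\norm{X\Psi(X\tp MD)}\fnormf \le \norm{X}\fnormf^2\sigmamax{M}\norm{D}\fnormf$ does \emph{not} give a $\kappa(M)$ bound because $\norm{X}\fnorms$ can be as large as $p/\sigmamin{M}$ on $\Omega$, not $1/\sigmamin{M}$. The fix is to peel off operator norms at the right places — bound $\norm{XY}\fnormf \le \norm{X}_2\norm{Y}\fnormf$ and $\norm{YX}\fnormf \le \norm{X}_2\norm{Y}\fnormf$ — so that only $\norm{X}_2^2 = \sigmamax{XX\tp}$ appears, which genuinely is $\le 1/\sigmamin{M}$ on $\Omega$. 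One should also double-check the operator-norm bound $\norm{\Psi(A)}_2 \le \norm{A}_2$ (true, since $\Psi$ is an averaging of $A$ and $A\tp$, both with the same spectral norm) and the elementary inequality $\norm{\Psi(A)}\fnormf \le \norm{A}\fnormf$. Everything else is routine.
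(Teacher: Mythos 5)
Your proof is correct and follows essentially the same route as the paper's: bound the Hessian operator norm of $f$ by $L_g$ via Assumption \ref{as:blanket}(1), split the argument of $\nabla^2 f(\A(X))$ by the triangle inequality, use $\sigmamax{X\tp MX}\le 1$ for the first factor, and use $\sigma_{\max}(XX\tp)\le \sigmamax{X\tp MX}/\sigmamin{M}\le 1/\sigmamin{M}$ to obtain the $\kappa(M)$ factor for the second term. Your explicit care with the spectral-versus-Frobenius submultiplicativity (i.e.\ $\norm{AB}\le \sigma_{\max}(A)\norm{B}$) is exactly the step the paper uses implicitly in its inequality $(iii)$, and your constant $\tfrac{3}{2}+\kappa(M)$ is marginally sharper than the stated $2+\kappa(M)$.
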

\begin{proof}

According to the expression  of $\norm{\mathcal{J}_G(X)[D]}$ in \eqref{eq:GD}, we have
    \begin{equation}
    \begin{aligned}
        \norm{\mathcal{J}_G(X)[D]}{}&\overset{(i)}{\le} L_g\norm{D\left(\frac{3}{2}I_p-\frac{1}{2}X\tp MX\right)-X\Psi(X\tp MD)}\\
        &\overset{(ii)}{\le} L_g\left(2\norm{D}+\norm{XX\tp MD}\right)\overset{(iii)}{\le}(2+\kappa(M))L_g\norm{D},
    \end{aligned}
    \end{equation}
where the inequality $(i)$  can be directly obtained  from Assumption \ref{as:blanket} that  $\nabla f$ is $L_g-$Lipschitz continuous, and the inequality $(ii)$ follows from the fact that $\sigmamax{X\tp MX}\le 1$. In addition, the  inequality $(iii)$ holds because 
\[\|XX\tp MD\|\le \sigmamax{XX\tp}\sigmamax{M}\|D\|\le\frac{\sigmamax{X\tp MX}\sigmamax{M}}{\sigmamin{M}}\norm{D}\le \kappa(M)\norm{D}.\]
The proof is completed. 
\end{proof}

Before analyzing the relationships between \ref{cdfcp} and \ref{sogse}, we define the thresholding value $\tilde{\beta}$ of the penalty parameter $\beta$ as 
\begin{equation}
    \tilde{\beta} \geq \frac{12\kappa(M)(3(p+1)L_g+\sigma_{\min}^{\frac{1}{2}}(M)L_0)}{\sigma_{\min}(M)}.  
\end{equation}

\subsection{Relationships on first-order stationary points}\label{subsec:fo}
In this subsection, we discuss the relationships between \ref{cdfcp} and \ref{sogse}  regarding first-order stationary points.
To begin with, the following proposition states that when $\beta\ge\tilde{\beta}$, all first-order stationary points of \ref{cdfcp} are restricted in the neighborhood $\Omega$ of $\Smnp$. 
\begin{proposition}\label{pro:sigma_up}
    Suppose Assumption \ref{as:blanket} holds, and $\beta > \tilde{\beta}$. Then for any first-order stationary point $X \in \Rnp$ of \ref{cdfcp}, it holds that $\sigmamax{X^\top MX}\le 1$, i.e., $X\in\Omega$.
\end{proposition}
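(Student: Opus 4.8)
The plan is to argue by contradiction: suppose $X$ is a first-order stationary point of \ref{cdfcp} with $\sigma_{\max}(X^\top M X) > 1$, and derive a contradiction by exhibiting a particular symmetric matrix $Q$ commuting with $X^\top M X$ for which the identity in Lemma \ref{Le_inner_nablah_XQ} forces $\inner{XQ,\nabla h(X)} \neq 0$, contradicting $\nabla h(X) = 0$. Concretely, I would diagonalize $S := X^\top M X = U \Sigma U^\top$ with $\Sigma = \mathrm{diag}(s_1,\dots,s_p)$, $s_1 \geq \cdots \geq s_p \geq 0$, and $s_1 = \sigma_{\max}(X^\top M X) > 1$. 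The natural choice is $Q := U E_{11} U^\top$ where $E_{11}$ is the rank-one projector onto the top eigenvector, or more robustly $Q$ as the spectral projector onto all eigenvalues exceeding $1$; this $Q$ is symmetric, positive semidefinite, and commutes with $S$, so Lemma \ref{Le_inner_nablah_XQ} applies.

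Plugging into \eqref{eq:innergrah}, the right-hand side becomes $\mathrm{tr}\big( (\beta S(S+I_p) - \tfrac32 \Lambda(X))(S - I_p) Q \big)$. Since $Q$ projects onto eigenspaces where $s_i > 1$, the factor $(S-I_p)Q$ is positive semidefinite (indeed positive definite on the relevant subspace), and $S(S+I_p)Q$ is positive semidefinite with eigenvalues $s_i(s_i+1) > 2$ there. So the $\beta$-term contributes something like $\beta \sum_{i : s_i > 1} s_i(s_i+1)(s_i - 1) > 0$, which grows with $\beta$, while the $\Lambda(X)$-term is bounded. The key quantitative step is to bound $\|\Lambda(X)\| = \|\Psi(X^\top G(X))\|$ from above: using $\|\Lambda(X)\| \le \|X\|\,\|G(X)\|$ together with Lemma \ref{le:normg} and the relation $\|X\|^2 \le \mathrm{tr}(S)/\sigma_{\min}(M) \le p\, s_1 / \sigma_{\min}(M)$ (since $\mathrm{tr}(X^\top M X) \ge \sigma_{\min}(M)\|X\|^2$), I can express everything in terms of $s_1$, $p$, $L_g$, $L_0$, $\kappa(M)$, $\sigma_{\min}(M)$. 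Then for $\beta > \tilde\beta$ the positive $\beta$-term strictly dominates the $\Lambda$-term, making the trace strictly positive, contradicting $\inner{XQ, \nabla h(X)} = 0$.

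The main obstacle I anticipate is the bookkeeping in the quantitative comparison: I need to show that the chosen $\tilde\beta$ — roughly $\tilde\beta \gtrsim \kappa(M)(pL_g + \sigma_{\min}^{1/2}(M) L_0)/\sigma_{\min}(M)$ — is genuinely large enough. The subtlety is that when $s_1$ is only slightly larger than $1$, the $\beta$-term $\beta(s_1-1)\,s_1(s_1+1)$ is small (proportional to $s_1 - 1$), but so is the gap I need to create; however $\|\Lambda(X)\|$ does \emph{not} vanish as $s_1 \to 1$, so I must be careful that the comparison still works. The resolution is that $(S-I_p)Q$ appears linearly on \emph{both} sides of the effective inequality once one isolates the direction: more precisely, dividing through by the positive quantity $\mathrm{tr}((S-I_p)Q)$ (or picking $Q = UE_{11}U^\top$ so the trace is just $(s_1-1)$ times scalar factors), the $(s_1 - 1)$ factor cancels and one is left comparing $\beta\, s_1(s_1+1) \ge \beta \cdot 1 \cdot 2 = 2\beta$ against $\tfrac32\|\Lambda(X)\|$ plus lower-order terms — and since $s_1 > 1$ forces $\|X\|$ to be bounded below in a way that does not help, I instead use that $\|\Lambda(X)\|$ is bounded \emph{above} in terms of $\|X\|^2 \lesssim p s_1/\sigma_{\min}(M)$, so the inequality to verify is of the form $2\beta \gtrsim \tfrac32 \cdot (\text{const}) \cdot p s_1 L_g / \sigma_{\min}(M) + \dots$, which unfortunately still has $s_1$ on the right. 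To handle this I would instead keep the full factor and show $\beta s_1(s_1+1)(s_1-1) > \tfrac32 \|\Lambda(X)\|(s_1-1) \cdot(\text{something} \le s_1)$: dividing by $(s_1-1)$ and noting $s_1(s_1+1) \ge s_1$, it suffices that $\beta s_1 > \tfrac32 c\, p s_1 L_g/\sigma_{\min}(M) + \dots$, i.e. $\beta > \tfrac32 c\,pL_g/\sigma_{\min}(M) + \dots$, giving exactly the claimed threshold after tracking the constant $c = \tfrac32 + \tfrac12 s_1$ from Lemma \ref{le:normg} — here I'd use that on a stationary point outside $\Omega$ one can still get a self-improving bound, or alternatively restrict attention to the largest eigenvalue only and absorb the $s_1$-dependence. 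Getting these constants to line up with $\tilde\beta$ is the delicate part; the conceptual structure (contradiction via a commuting spectral-projector test matrix in Lemma \ref{Le_inner_nablah_XQ}) is clear.
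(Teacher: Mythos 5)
Your proposal is correct and follows essentially the same route as the paper: the paper also argues by contradiction, tests stationarity against $Q = vv^\top$ with $v$ the top unit eigenvector of $X^\top MX$ (your rank-one spectral projector), invokes Lemma \ref{Le_inner_nablah_XQ}, cancels the common factor $\sigma_{\max}(X^\top MX)-1 > 0$, and then bounds $\sigma_{\max}(X^\top G(X))$ via Lemma \ref{le:normg} together with $\sigma_{\max}^2(X)\le \sigma_{\max}(X^\top MX)/\sigma_{\min}(M)$ to show that $\beta\sigma_{\max}(X^\top MX)(\sigma_{\max}(X^\top MX)+1)$ dominates for $\beta>\tilde\beta$, exactly resolving the $s_1$-dependence the way you describe (the bound on $\Lambda(X)$ is a linear-plus-quadratic polynomial in $s_1$, so dividing by $s_1$ yields an $s_1$-free threshold). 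The bookkeeping you were worried about goes through as anticipated; see \eqref{eq:normlambda} in the paper for the precise constants.
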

\begin{proof}
    We prove this proposition by contradiction. Suppose $\sigma_{\max}(X\tp MX) > 1$, and let $v\in\mathbb{R}^p$ be a unit eigenvector of $X\tp MX$ corresponding to $\sigma_{\max}(X\tp MX)$. Since $X$ is a first-order stationary point of \ref{cdfcp}, we have
    \begin{equation}
        \begin{aligned}
            0 ={}& \inner{Xvv\tp, \nabla h(X)}\\
            \overset{(i)}{=}{}&(\sigma_{\max}(X\tp MX)-1)\left(\beta\sigma_{\max}(X\tp MX)(\sigma_{\max}(X\tp MX)+1)-\frac{3}{2}\tr\mybrace{vv\tp X\tp G(X)}\right)\\
            \overset{(ii)}{\ge}{}&(\sigma_{\max}(X\tp MX)-1)\left(\beta\sigma_{\max}(X\tp MX)(\sigma_{\max}(X\tp MX)+1)-\frac{3}{2}\sigma_{\max}(X^\top G(X))\right),
        \end{aligned}
    \end{equation}
where the equation $(i)$ follows from \eqref{eq:innergrah}, and the inequality $(ii)$ holds since $\tr\mybrace{vv\tp X\tp G(X)}=v\tp X\tp G(X)v\le\sigmamax{X^\top G(X)}$. Consequently, by  $\sigma_{\max}(X\tp MX) > 1$, we obtain
\begin{equation}\label{eq:stacontra1}
     0 \ge\beta\sigma_{\max}(X\tp MX)(\sigma_{\max}(X\tp MX)+1)-\frac{3}{2}\sigma_{\max}(X^\top G(X)).
\end{equation}
For the second term on the right-hand side of the above inequality, we have the following estimation.
    \begin{equation}\label{eq:normlambda}
        \begin{aligned}
            &\sigma_{\max}(X\tp G(X) )\leq \sigma_{\max}(X) \norm{G(X)}\\
            \overset{(i)}{\le}{}& \sigma_{\max}(X)\left(L_g\left(\frac{3}{2}  + \frac{1}{2} \sigma_{\max}(X\tp MX)  \right)\norm{X} + L_0 \right)\\
            \overset{(ii)}{\le}{}& \sigma_{\max}(X)\left( p L_g\left(\frac{3}{2}  + \frac{1}{2} \sigma_{\max}(X\tp MX)  \right)\sigma_{\max}(X) + L_0 \right)\\
            \leq{}&  p L_g\left(\frac{3}{2} + \frac{1}{2} \sigma_{\max}(X\tp MX)  \right)\sigma_{\max}^2(X) + L_0\sigma_{\max}(X)\\
            \overset{(iii)}{\le}{}&  p L_g\left(\frac{3}{2} + \frac{1}{2} \sigma_{\max}(X\tp MX)  \right)\frac{\sigma_{\max}(X\tp MX)}{\sigma_{\min}(M)} + L_0\frac{\sigma_{\max}^{\frac{1}{2}}(X\tp MX)}{\sigma_{\min}^{\frac{1}{2}}(M)} \\
            \leq{}&  p L_g\left(\frac{3}{2} + \frac{1}{2} \sigma_{\max}(X\tp MX)  \right)\frac{\sigma_{\max}(X\tp MX)}{\sigma_{\min}(M)} + L_0\frac{\sigma_{\max}(X\tp MX)}{\sigma_{\min}^{\frac{1}{2}}(M)} \\
            \leq{}& \frac{3 pL_g + 2L_0\sigma_{\min}^{\frac{1}{2}}(M) }{2\sigma_{\min} (M)} \sigma_{\max}(X\tp MX) + \frac{pL_g}{2\sigma_{\min}(M)} \sigma_{\max}^2(X\tp MX),
        \end{aligned}
    \end{equation}
    where the inequality $(i)$ follows from \eqref{eq:normg}, the inequality $(ii)$ follows from the fact that $\|X\|\le p\sigma_{\max}(X)$, and the inequality $(iii)$ utilizes the following estimation of $ \sigma_{\max}^2(X)$,
    \begin{equation}\label{eq:sigmax}
        \sigma_{\max}^2(X)=\sigmamax{X\tp X}\le\frac{\sigmamax{X^\top MX}}{\sigmamin{M}}.
    \end{equation}
    
    Combining \eqref{eq:stacontra1} with \eqref{eq:normlambda}, we immediately obtain that
    \begin{equation}
        \begin{aligned}
        0\ge{}&\mybrace{\beta\sigma_{\max}(X\tp MX)(\sigma_{\max}(X\tp MX)+1)-\frac{3}{2}\sigma_{\max}(X^\top G(X))} \\
            \geq{}& \mybrace{ \sigma_{\max}(X\tp MX) \left(\beta  - \frac{9 pL_g + 6L_0\sigma_{\min}^{\frac{1}{2}}(M)}{4\sigma_{\min}(M)}  \right) + \sigma_{\max}^2(X\tp MX) \left( \beta - \frac{3pL_g}{4\sigma_{\min}(M)}  \right)}\\
            >{}& 0,
        \end{aligned}
    \end{equation}
   which leads to a contradiction. Therefore, we can conclude that $\sigma_{\max}\left( X\tp M X \right) \leq 1$, and complete the proof. 
\end{proof}

The following proposition shows an equivalence relationship between \ref{sogse} and \ref{cdfcp} regarding first-order stationary points over the manifold $\Smnp$, without imposing any conditions on the penalty parameter $\beta$.

\begin{proposition}\label{Prop_first_order_equivalence_feasible}
   Suppose Assumption \ref{as:blanket} holds, then \ref{sogse} and \ref{cdfcp} share the same first-order stationary points over the manifold $\mathcal{S}_M(n,p)$. 
\end{proposition}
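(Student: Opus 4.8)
The plan is to exploit the fact that on the feasible set the constraint dissolving operator $\A$ reduces to the identity map, which collapses the expression for $\nabla h$ onto the Riemannian-type quantity $L(X)$, after which both stationarity systems reduce to the single equation $L(X)=0$.

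First I would fix an arbitrary $X\in\Smnp$, so that $X\tp MX=I_p$. Then
\[
\A(X)=X\mybrace{\tfrac{3}{2}I_p-\tfrac{1}{2}X\tp MX}=X,
\]
and consequently $G(X)=\nabla f(\A(X))=\nabla f(X)$ and $\Lambda(X)=\Psi(X\tp G(X))=\Psi(X\tp\nabla f(X))$. Substituting these identities, together with the fact that $(X\tp MX)^2-I_p=0$, into the expression for $\nabla h(X)$ furnished by Lemma \ref{Le_formulation_gradient_f}, the penalty term vanishes, the factor $\tfrac{3}{2}I_p-\tfrac{1}{2}X\tp MX$ becomes $I_p$, and one is left with
\[
\nabla h(X)=\nabla f(X)-MX\Psi(X\tp\nabla f(X))=L(X).
\]
Hence, restricted to $\Smnp$, the first-order stationarity condition $\nabla h(X)=0$ of \ref{cdfcp} is equivalent to $L(X)=0$, and this holds for every value of the penalty parameter $\beta$.

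Next I would invoke Definition \ref{def:ocso}. For $X\in\Smnp$ the feasibility equation $X\tp MX-I_p=0$ in \eqref{eq:regrad} is automatically satisfied, so $X$ is a first-order stationary point of \ref{sogse} if and only if $\nabla f(X)-MX\Psi(X\tp\nabla f(X))=L(X)=0$. Comparing the two characterizations, a point of $\Smnp$ is first-order stationary for \ref{sogse} exactly when it is first-order stationary for \ref{cdfcp}, which is the assertion of the proposition.

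There is no substantial obstacle here; the only points needing minor care are verifying that $\A$ restricts to the identity on $\Smnp$ and correctly tracking the symmetrization $\Psi$ through the substitution into \eqref{eq:gradh}. Since both the stationarity condition for \ref{sogse} (on a feasible point) and the one for \ref{cdfcp} reduce to the identical equation $L(X)=0$, both inclusions follow simultaneously, and — unlike the neighborhood-type equivalence in Proposition \ref{pro:sigma_up} — no lower bound on $\beta$ is required.
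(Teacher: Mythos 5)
Your proposal is correct and follows essentially the same route as the paper: both reduce $\nabla h(X)$ on the feasible set to $\nabla f(X)-MX\Psi(X\tp\nabla f(X))$ using $X\tp MX=I_p$ (so that $\A(X)=X$ and the penalty gradient vanishes), and then identify this with the stationarity condition of \ref{sogse} via Definition \ref{def:ocso}. Your added remark that no lower bound on $\beta$ is needed matches the paper's statement of the proposition.
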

\begin{proof}
    For any $X\in\Smnp$, it follows from the feasibility of $X$ (i.e., $X\tp MX = I_p$) and the expression of $\nabla h(X)$ that 
    \begin{equation*}
        \nabla h(X) = \nabla f(X) - MX\Psi(X\tp \nabla f(X)). 
    \end{equation*}

    Then for any $X \in \Smnp$ that is a first-order stationary point of \ref{cdfcp}, from Definition  \ref{def:occd}, it holds that $0 = \nabla h(X) = \nabla f(X) - MX\Psi(X\tp \nabla f(X))$, hence Definition \ref{def:ocso} illustrates that $X$ is a first-order stationary point of \ref{sogse}. 

    Furthermore, for any $X \in \Smnp$ that is a first-order stationary point of \ref{sogse}, it follows from Definition \ref{def:ocso} that $\nabla h(X) = \nabla f(X) - MX\Psi(X\tp \nabla f(X)) = 0$. As a result, together with Definition \ref{def:occd}, we can conclude that $X$ is a first-order stationary point of \ref{cdfcp}. This completes the proof.  

\end{proof}

 For the relationship between first-order stationary points of \ref{sogse} and those of \ref{cdfcp} over the entire space $\Rnp$, the following theorem illustrates that a first-order stationary point of \ref{cdfcp} is either a first-order stationary point of \ref{sogse}, or sufficiently far away from $\Smnp$. 
\begin{theorem}
    \label{The_first_order_equivalence_feasible}
    Suppose Assumption \ref{as:blanket} holds and $\beta \geq \tilde{\beta}$, then for any $X \in \Rnp$ that is a first-order stationary point of \ref{cdfcp}, $X$ is either a first-order stationary point of \ref{sogse}, or satisfies $\sigma_{\min}(X\tp MX) \leq \frac{\tilde{\beta}}{4 \beta}$. 
\end{theorem}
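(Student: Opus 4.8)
The plan is to show that if $X$ is a first-order stationary point of \ref{cdfcp} that is \emph{not} a first-order stationary point of \ref{sogse}, then $\sigma_{\min}(X\tp MX)$ must be small, specifically at most $\tilde\beta/(4\beta)$. By Proposition \ref{pro:sigma_up}, we already know $X\in\Omega$, i.e. $\sigma_{\max}(X\tp MX)\le 1$, so all eigenvalues of $X\tp MX$ lie in $[0,1]$; this will be used freely. The strategy splits into two cases according to whether $X\tp MX = I_p$ or not.

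First I would handle the case $X\tp MX = I_p$. Then $X\in\Smnp$, and Proposition \ref{Prop_first_order_equivalence_feasible} immediately gives that $X$ is a first-order stationary point of \ref{sogse}, contradicting our assumption. So this case cannot occur, and we may assume $X\tp MX\ne I_p$. The main work is then the case $X\tp MX \ne I_p$: I want to extract a contradiction-style inequality unless $\sigma_{\min}(X\tp MX)$ is small. The natural tool is Lemma \ref{Le_inner_nablah_XQ}: since $X\tp MX$ is symmetric positive definite, for \emph{any} $Q\in\mathbb{S}^p$ commuting with $X\tp MX$ (e.g. any polynomial in $X\tp MX$, or any spectral projector of $X\tp MX$) we have $\inner{XQ,\nabla h(X)} = 0$ because $\nabla h(X)=0$. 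Plugging into \eqref{eq:innergrah}:
\begin{equation*}
   \mathrm{tr}\!\left(\left(\beta X\tp MX(X\tp MX+I_p) - \tfrac{3}{2}\Lambda(X)\right)(X\tp MX-I_p)Q\right) = 0 .
\end{equation*}
The idea is to choose $Q$ to be the spectral projector of $X\tp MX$ onto eigenvalues in some range, or more simply to diagonalize: in the eigenbasis of $X\tp MX$, let $\lambda_1\ge\cdots\ge\lambda_p$ be its eigenvalues (all in $[0,1]$) and let $e_ie_i\tp$ be the corresponding rank-one projectors, which commute with $X\tp MX$. Taking $Q=e_ie_i\tp$ yields, for each $i$ with $\lambda_i\ne 1$,
\begin{equation*}
   \beta\lambda_i(\lambda_i+1) = \tfrac{3}{2}\,e_i\tp\Lambda(X)e_i .
\end{equation*}
(When $\lambda_i=1$ the factor $\lambda_i-1$ kills the equation and gives no information.) If $X\tp MX\ne I_p$, at least one eigenvalue, and in particular the \emph{smallest} one $\lambda_{\min}=\sigma_{\min}(X\tp MX)$, satisfies $\lambda_{\min}<1$ — unless all eigenvalues equal $1$, which is exactly the excluded case. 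So for $i$ achieving the minimum we get $\beta\lambda_{\min}(\lambda_{\min}+1) = \tfrac{3}{2}e_i\tp\Lambda(X)e_i \le \tfrac{3}{2}\|\Lambda(X)\| \le \tfrac{3}{2}\sigma_{\max}(X\tp G(X))$.

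The last step is to bound $\sigma_{\max}(X\tp G(X))$ using the estimate already derived inside the proof of Proposition \ref{pro:sigma_up} (inequality \eqref{eq:normlambda} together with $\sigma_{\max}(X\tp MX)\le 1$), which gives $\sigma_{\max}(X\tp G(X)) \le \tfrac{3pL_g+2L_0\sigma_{\min}^{1/2}(M)}{2\sigma_{\min}(M)} + \tfrac{pL_g}{2\sigma_{\min}(M)}$, a quantity comfortably bounded by something like $\tfrac{2\big(3(p+1)L_g+\sigma_{\min}^{1/2}(M)L_0\big)}{2\sigma_{\min}(M)}$, i.e. of order $\tilde\beta/(6\kappa(M))$ up to constants. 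Combining, and using $\lambda_{\min}+1\ge 1$, we obtain $\beta\lambda_{\min}\le \beta\lambda_{\min}(\lambda_{\min}+1)\le \tfrac{3}{2}\sigma_{\max}(X\tp G(X)) \le \tfrac{\tilde\beta}{4}$, hence $\sigma_{\min}(X\tp MX)=\lambda_{\min}\le \tfrac{\tilde\beta}{4\beta}$, which is exactly the claimed bound. I expect the main obstacle to be purely bookkeeping: carefully tracking the constants so that the bound on $\tfrac{3}{2}\sigma_{\max}(X\tp G(X))$ lands under $\tilde\beta/4$ with the specific definition of $\tilde\beta$ given in the excerpt (the factor $12\kappa(M)$ and the $3(p+1)L_g$ term), and making sure the degenerate subcases — $\lambda_{\min}=1$ (forcing $X\tp MX=I_p$, already handled) versus $\lambda_{\min}<1$ — are cleanly separated so that the application of Lemma \ref{Le_inner_nablah_XQ} with a rank-one commuting $Q$ is legitimate in each.
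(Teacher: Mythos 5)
Your proposal is correct and follows essentially the same route as the paper: Proposition \ref{pro:sigma_up} to place $X$ in $\Omega$, Proposition \ref{Prop_first_order_equivalence_feasible} to dispose of the feasible case, and Lemma \ref{Le_inner_nablah_XQ} combined with the bound \eqref{eq:normlambda} on $\sigma_{\max}(X\tp G(X))$ for the infeasible case. The only difference is the choice of test matrix: the paper takes $Q = X\tp MX - I_p$ and lower-bounds the resulting trace by $\left(\beta\sigma_{\min}(X\tp MX)-\tfrac{3}{2}\sigma_{\max}(X\tp G(X))\right)\norm{X\tp MX-I_p}^2$ before dividing by $\norm{X\tp MX-I_p}^2>0$, whereas you take the rank-one spectral projector associated with $\lambda_{\min}<1$ and read off the scalar identity directly; both yield the same final constant.
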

\begin{proof}
    For any $X \in \Rnp$ that is a first-order stationary point of \ref{cdfcp}, it follows from the expression of $\nabla h(X)$ in \eqref{eq:gradh} and Lemma \ref{Le_inner_nablah_XQ} that 
\begin{equation}\label{eq:pro2-1}
    \begin{aligned}
      0{}&=\left\langle X(X\tp MX-I_p),\nabla h(X) \right\rangle\\
      &\overset{(i)}{=}\operatorname{tr}\left(\left(\beta X\tp MX(X\tp MX+I_p)-\frac{3}{2}\Lambda(X)\right)\left(X\tp MX-I_p\right)^2\right)\\
      &\overset{(ii)}{\ge}\left(\beta\sigma_{\min}(X^\top MX)-\frac{3}{2}\sigmamax{X\tp G(X)}\right)\left\|X\tp MX-I_p\right\|^2\\
      &\overset{(iii)}{\ge}\left(\beta\sigma_{\min}(X^\top MX)-\frac{6pL_g+3\sigma_{\min}^{\frac{1}{2}}(M)L_0}{2\sigma_{\min}(M)}\right)\left\|X\tp MX-I_p\right\|^2,
\end{aligned}
\end{equation} 
where the equation $(i)$ directly follows from Lemma \ref{Le_inner_nablah_XQ}, and the inequality $(ii)$ utilizes the expression of $\Lambda(X)$ and the fact that $X\tp MX(X\tp MX+I_p) \succeq X\tp MX$. Moreover, the inequality
$(iii)$ follows from Proposition \ref{pro:sigma_up} and \eqref{eq:normlambda}. Specifically, according to Proposition \ref{pro:sigma_up}, we have $\sigmamax{X\tp MX}\le1$, then by \eqref{eq:normlambda} it holds that $\sigmamax{X\tp G(X)}\le 2pL_g+\sigma_{\min}^{\frac{1}{2}}(M)L_0$. 

Then, when $X \in \Smnp$, Proposition \ref{Prop_first_order_equivalence_feasible} illustrates that $X$ is a first-order stationary point of \ref{sogse}. On the other hand, when $X \notin \Smnp$, it holds that $\|X\tp MX-I_p\|>0$. Therefore, \eqref{eq:pro2-1} illustrates that $\sigma_{\min}(X\tp MX)\le\frac{6pL_g+3\sigma_{\min}^{\frac{1}{2}}(M)L_0}{2\sigma_{
  \min}(M)\beta}$. Together with the fact that the penalty parameter $\beta$ satisfies $\beta\ge\tilde{\beta}\ge\frac{12pL_g+6\sigma_{\min}^{\frac{1}{2}}(M)L_0}{\sigma_{\min}(M)}$, it holds that $\sigma_{\min}(X\tp MX)\le\frac{\tilde{\beta}}{4\beta}$. The proof is completed.
\end{proof}

In the following proposition, we prove that all infeasible first-order stationary points of \ref{cdfcp} are strict saddle points of \ref{cdfcp} when $\beta\ge\tilde{\beta}$. 
\begin{proposition}
    \label{pro:saddle} 
    Suppose Assumptions \ref{as:blanket} and \ref{as:s} hold, and $\beta \geq \tilde{\beta}$. Then for any  $X \in \Rnp \setminus \ca{S}_{n,p}$ that is a first-order stationary point of \ref{cdfcp}, it holds that $X$ is a $\frac{\sigma_{\min}(M)}{4}\beta$-strict saddle point of \ref{cdfcp}.
\end{proposition}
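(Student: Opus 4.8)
The plan is to exhibit, for every first-order stationary point $X\notin\Smnp$ of \ref{cdfcp}, an explicit direction $D$ along which $\inner{D,\nabla^2 h(X)[D]}$ is strongly negative. Since $\beta\ge\tilde{\beta}$, Proposition \ref{pro:sigma_up} gives $X\in\Omega$, i.e.\ $\sigma_{\max}(X\tp MX)\le 1$; and because every first-order stationary point of \ref{sogse} is feasible, Theorem \ref{The_first_order_equivalence_feasible} together with $X\notin\Smnp$ forces $\lambda:=\sigma_{\min}(X\tp MX)\le\tilde{\beta}/(4\beta)\le\tfrac14$. Let $v\in\bb{R}^p$ be a unit eigenvector of $N:=X\tp MX$ associated with $\lambda$, and take $D:=Xvv\tp$; in the degenerate case $\lambda=0$ (equivalently $Xv=0$, as $M\succ 0$) take instead $D:=wv\tp$ for an arbitrary unit vector $w\in\bb{R}^n$. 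I would record at once the elementary facts $\|D\|^2=\|Xv\|^2$, $\sigma_{\min}(M)\|D\|^2\le\lambda\le\sigma_{\max}(M)\|D\|^2$, and $\|D\|^2\le 1/\sigma_{\min}(M)$, all of which are used below.

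First I would substitute this $D$ into the closed form of $\nabla^2 h(X)[D]$ from Lemma \ref{Le_formulation_hessian_h}. Using $Nv=\lambda v$ one gets $X\tp MD=\lambda vv\tp$, $D((X\tp MX)^2-I_p)=(\lambda^2-1)D$, and $\mathcal{J}_G(X)[D]=\tfrac{3}{2}(1-\lambda)\,\nabla^2 f(\mathcal{A}(X))[Xvv\tp]$, so the two penalty-dependent terms of the Hessian collapse to
\[
\beta\inner{D,MD((X\tp MX)^2-I_p)}+4\beta\inner{D,MX\Psi(X\tp MD)X\tp MX}=\beta\lambda(\lambda^2-1)+4\beta\lambda^3=\beta\lambda(5\lambda^2-1).
\]
Since $\lambda\le\tfrac14$ we have $5\lambda^2-1\le-\tfrac{11}{16}$, hence this contribution is at most $-\tfrac{11}{16}\beta\lambda$.

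Next I would handle the remaining five terms of $\nabla^2 h(X)[D]$, bounding each of their inner products with $D$ by $C\|D\|^2$ for a constant $C$ that depends only on $p$, $L_g$, $L_0$ and the spectrum of $M$, and in particular not on $\beta$: on $\Omega$ the quantities $\|X\|$, $\|G(X)\|$ (Lemma \ref{le:normg}), $\|\Lambda(X)\|$ (via \eqref{eq:normlambda}), $\|\mathcal{J}_G(X)[D]\|/\|D\|$ (Lemma \ref{le:normj}) and $\|D\|$ itself are all controlled by such constants, and any bare factor of $\lambda$ that arises can be absorbed via $\lambda\le\sigma_{\max}(M)\|D\|^2$. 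Combining with the previous paragraph and $\|D\|^2\le\lambda/\sigma_{\min}(M)$ then gives
\[
\inner{D,\nabla^2 h(X)[D]}\le-\tfrac{11}{16}\beta\lambda+\tfrac{C}{\sigma_{\min}(M)}\lambda=\Big(\tfrac{C}{\sigma_{\min}(M)}-\tfrac{11}{16}\beta\Big)\lambda.
\]

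Finally, since the coefficient above is negative once $\beta\ge\tilde{\beta}$, I would insert $\lambda\ge\sigma_{\min}(M)\|D\|^2$ to obtain $\inner{D,\nabla^2 h(X)[D]}\le\big(C-\tfrac{11}{16}\beta\sigma_{\min}(M)\big)\|D\|^2$, and then use that $\tilde{\beta}$ is chosen so that $\tfrac{11}{16}\beta\sigma_{\min}(M)-C\ge\tfrac14\beta\sigma_{\min}(M)$, which yields $\inner{D,\nabla^2 h(X)[D]}\le-\tfrac{\sigma_{\min}(M)}{4}\beta\|D\|^2$; by Definition \ref{def:occd} this makes $X$ a $\tfrac{\sigma_{\min}(M)}{4}\beta$-strict saddle point of \ref{cdfcp}. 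The degenerate case is entirely analogous: when $Xv=0$ both summands of the $4\beta$-part vanish, the penalty contribution is simply $-\beta\,w\tp Mw\le-\beta\sigma_{\min}(M)\|D\|^2$, and the five remaining terms are again $O(\|D\|^2)$ with a $\beta$-free constant. I expect the genuinely delicate step to be the third paragraph, namely tracking the constant $C$ precisely enough to confirm that the prescribed threshold $\tilde{\beta}$ really does dominate $\tfrac{16}{7}C/\sigma_{\min}(M)$ — equivalently, that the slack $11/16$ gained from $\lambda\le\tfrac14$ leaves enough room for the advertised coefficient $\sigma_{\min}(M)\beta/4$; the rest is routine matrix algebra driven by the eigenvector identity $Nv=\lambda v$.
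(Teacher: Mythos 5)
Your proposal is correct and follows essentially the same route as the paper's proof: localize $X$ via Proposition \ref{pro:sigma_up} and Theorem \ref{The_first_order_equivalence_feasible}, test the Hessian along $D=Xvv^\top$ with $v$ a unit eigenvector for $\lambda=\sigma_{\min}(X^\top MX)\le\frac{1}{4}$, observe that the penalty-dependent terms collapse to $\beta\lambda(5\lambda^2-1)$, and absorb the remaining terms into a $\beta$-free constant times $\|D\|^2$ using Lemmas \ref{le:normg} and \ref{le:normj} together with $\sigma_{\min}(M)\|D\|^2\le\lambda\le\sigma_{\max}(M)\|D\|^2$, exactly as the paper does. Your separate treatment of the degenerate case $\lambda=0$ (where $D=Xvv^\top$ vanishes and the paper's inequality becomes vacuous) is a small refinement the paper omits; otherwise the two arguments coincide, including the final comparison of the accumulated constant with $\tilde{\beta}$.
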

\begin{proof}
  For any  $X \in \Rnp \setminus \ca{S}_{n,p}$ that is a first-order stationary point of \ref{cdfcp}, by Propositions \ref{pro:sigma_up} and \ref{The_first_order_equivalence_feasible}, we have $\sigmamax{X\tp MX}\le 1$ and $\sigmamin{X\tp MX}\le \frac{\tilde{\beta}}{4\beta}\le \frac{1}{4}$.
 Let $v\in\mathbb{R}^p$ be a unit eigenvector of $X\tp MX$ corresponding to $\sigma_{\min}(X\tp MX)$, and let $D:=Xvv\tp$. Then by the expression of $\nabla^2 h(X)[D]$ in \eqref{eq:hessh}, we have
  \[\begin{aligned}
      &\inner{D,\nabla^2 h(X)[D]}\\
      \overset{}{=}{}&\left\langle D, \mathcal{J}_G(X)[D]\left(\frac{3}{2}I_p-\frac{1}{2}X\tp MX\right)\right\rangle-3\sigma_{\min}(X\tp MX)\langle D, G(X)\rangle\\
     & -\sigma_{\min}(X\tp MX)\inner{D,\mathcal{J}_G(X)[D]}+\beta\sigma_{\min}(X\tp MX)\left(5\sigma_{\min}^2(X\tp MX)-1\right)\\
     \le{} & \norm{D}\norm{\mathcal{J}_G(X)[D]}\mybrace{\frac{3}{2}+\frac{1}{2}\sigmamax{X\tp MX}}+3\sigma_{\min}(X\tp MX)\norm{D}
     \norm{G(X)}\\
     &+\beta\sigma_{\min}(X\tp MX)\left(5\sigma_{\min}^2(X\tp MX)-1\right)\\
     \le{} & 3\norm{D}\norm{\mathcal{J}_G(X)[D]}+3\sigma_{\min}(X\tp MX)\norm{D}
     \norm{G(X)}+\beta\sigma_{\min}(X\tp MX)\left(\frac{5}{16}-1\right)\\
     \overset{(i)}{\le}{} & 3(2+\kappa(M))L_g\norm{D}^2+3\sigma_{\min}^\frac{1}{2}(X\tp MX)\norm{D}
     \mybrace{\frac{2L_g p}{\sigma_{\min}^{\frac{1}{2}}(M)}+L_0}-\beta\frac{\sigma_{\min}(X\tp MX)}{2}\\
      \overset{(ii)}{\le} {}& 3(2+\kappa(M))L_g\norm{D}^2+3\sigma_{\max}^{\frac{1}{2}}(M)\|D\|^2\mybrace{\frac{2L_g p}{\sigma_{\min}^{\frac{1}{2}}(M)}+L_0}-\beta\frac{\sigma_{\min}(M)}{2}\norm{D}^2\\
     \le {}& \left((3\kappa(M)(3(p+1)L_g+\sigma_{\min}^{\frac{1}{2}}(M)L_0)-\beta\frac{\sigma_{\min}(M)}{2}\right)\|D\|^2,
  \end{aligned}\]
  where the inequality $(i)$ follows from Lemmas \ref{le:normg} and \ref{le:normj}. Specifically, according to Lemma \ref{le:normg}, we have 
  \[\|G(X)\|\le L_g\mybrace{\frac{3}{2}+\frac{1}{2}\sigmamax{X\tp MX}}\|X\|+L_0\le2L_gp\sigmamax{X}+L_0\le\frac{2L_gp}{\sigma_{\min}^{\frac{1}{2}}(M)}+L_0. \]
 Furthermore, $(ii)$ follows from
  $\sigma_{\min}(M)\norm{D}^2\le \sigma_{\min}(X\tp MX)=\operatorname{tr}(vv\tp X\tp M Xvv\tp)\le\sigma_{\max}(M)\norm{D}^2.$
Therefore, with the fact that $\beta\ge\tilde{\beta}$, we have
   \[\inner{D,\nabla^2 h(X)[D]}\le -\frac{\sigma_{\min}(M)}{4}\beta\norm{D}^2,\]
which implies  that $X$ is a $\frac{\sigma_{\min}(M)}{4}\beta$-strict saddle point of \ref{cdfcp} and thus completes the proof. 
\end{proof}

In the following proposition, we show that when $f$ is a quadratic function, then any infeasible first-order stationary point $X\in\Rnp$ of \ref{cdfcp} satisfies  $\sigma_{\min}(X) = 0$. 
\begin{proposition}
    Suppose the objective function $f(X) := \frac{1}{2} \mathrm{tr}\left( X\tp AX \right)$, where $A \in \mathbb{S}^{n}$, and $\beta\ge \tilde{\beta}$. Then for any first-order stationary point $X \in \Rnp$ of \ref{cdfcp}, the eigenvalue of $X\tp MX$ is either $0$ or $1$. 
\end{proposition}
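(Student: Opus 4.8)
Write $P := X^\top M X\in\mathbb{S}^p$. Since $M\succ 0$ we have $P\succeq 0$, and Proposition~\ref{pro:sigma_up} gives $\sigma_{\max}(P)\le 1$, so every eigenvalue of $P$ lies in $[0,1]$; it therefore suffices to fix a unit eigenvector $w$ of $P$ with eigenvalue $\lambda\in[0,1)$ and prove $\lambda=0$. The plan is, first, to show that $w$ must automatically be an eigenvector of $B:=X^\top A X$ as well, and then to restrict the stationarity equation $\nabla h(X)=0$ to $w$ so as to turn the sixth-order penalty into a generalized-eigenvalue equation for the pencil $(A,M)$ carrying an eigenvalue of order $\beta$ --- which is incompatible with $\beta\ge\tilde\beta$. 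For the setup I would specialize the gradient: $\nabla f(X)=AX$ gives $G(X)=A\mathcal{A}(X)=AXC$ with $C:=\tfrac32 I_p-\tfrac12 P$, and $\Lambda(X)=\Psi(X^\top G(X))=\tfrac12(BC+CB)$, so Lemma~\ref{Le_formulation_gradient_f} reads $\nabla h(X)=AXC^2-\tfrac12 MX(BC+CB)+\beta MX(P^2-I_p)$; left-multiplying $\nabla h(X)=0$ by $X^\top$ produces the $p\times p$ identity
\begin{equation}\label{eq:quadstar}
    BC^2-\tfrac12 P(BC+CB)+\beta P(P^2-I_p)=0,
\end{equation}
which, together with its transpose, drives the first step.

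For the first step, fix an orthonormal eigenbasis $w_1,\dots,w_p$ of $P$, with $Pw_i=\lambda_i w_i$ and $Cw_i=c_i w_i$ where $c_i=\tfrac32-\tfrac12\lambda_i\in[1,\tfrac32]$, and set $b_{ij}:=w_i^\top B w_j$. Evaluating \eqref{eq:quadstar} and its transpose between $w_i$ and $w_j$ for $i\ne j$ yields $b_{ij}\bigl(c_j^2-\tfrac12\lambda_i(c_i+c_j)\bigr)=0$ and $b_{ij}\bigl(c_i^2-\tfrac12\lambda_j(c_i+c_j)\bigr)=0$. Assuming $b_{ij}\ne 0$ and substituting $\lambda_k=3-2c_k$, adding the two relations collapses them to $c_i^2+c_ic_j+c_j^2=\tfrac32(c_i+c_j)$; combined with the elementary bound $c_i^2+c_ic_j+c_j^2\ge\tfrac34(c_i+c_j)^2$ this forces $c_i+c_j\le 2$, whence $c_i,c_j\ge 1$ gives $c_i=c_j=1$, i.e. $\lambda_i=\lambda_j=1$. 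Therefore $b_{ij}=0$ whenever $\lambda_i\ne 1$ or $\lambda_j\ne 1$; in particular $Bw=(w^\top B w)\,w$ since our $w$ has $\lambda\ne 1$.

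For the second step, put $u:=Xw$, $b:=w^\top B w$ and $c:=\tfrac32-\tfrac12\lambda>0$. Then $Bw=bw$ and $Cw=cw$ give $\Psi(BC)w=bc\,w$, so applying $\nabla h(X)=0$ to $w$ produces $c^2 Au=\bigl(bc+\beta(1-\lambda^2)\bigr)Mu$. If $u=0$ then $\lambda=w^\top P w=u^\top M u=0$ and we are done. Otherwise $\mu:=\bigl(bc+\beta(1-\lambda^2)\bigr)/c^2$ satisfies $Au=\mu Mu$ with $u\ne 0$, and $u^\top A u=b$, $u^\top M u=\lambda$ force $b=\mu\lambda$; substituting this back and cancelling the nonzero factor $1-\lambda$ gives the explicit identity
\begin{equation*}
    \mu=\frac{4\beta(1+\lambda)}{3(3-\lambda)}\ge\frac{4\beta}{9},
\end{equation*}
using $\lambda\in[0,1)$. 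But $Au=\mu Mu$ with $u\ne 0$ makes $\mu$ an eigenvalue of the symmetric matrix $M^{-1/2}AM^{-1/2}$, hence $\mu\le\sigma_{\max}(A)/\sigma_{\min}(M)$; since for this $f$ one may take $L_g=\sigma_{\max}(A)$ and $L_0=\|\nabla f(0)\|=0$, so $\tilde\beta\ge 36(p+1)\kappa(M)\sigma_{\max}(A)/\sigma_{\min}(M)$, the hypothesis $\beta\ge\tilde\beta$ makes $\tfrac{4\beta}{9}$ strictly exceed $\sigma_{\max}(A)/\sigma_{\min}(M)$ (as $16(p+1)\kappa(M)>1$), a contradiction. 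Hence $u=0$, $\lambda=0$, and every eigenvalue of $X^\top M X$ lies in $\{0,1\}$.

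I expect the first step to be the main obstacle. Each of the two off-diagonal scalar relations coming from \eqref{eq:quadstar} and its transpose is non-symmetric in $(i,j)$ and, on its own, does not force $\lambda_i=\lambda_j$; only after passing to the variables $c_i,c_j$ via $\lambda_k=3-2c_k$ and adding them do they reduce to the single quadratic identity $c_i^2+c_ic_j+c_j^2=\tfrac32(c_i+c_j)$, and it is the interaction of this identity with the box $c_i,c_j\in[1,\tfrac32]$ (which itself relies on $\sigma_{\max}(P)\le1$ from Proposition~\ref{pro:sigma_up}) that pins down $c_i=c_j=1$. Once $Bw=bw$ is in hand, the second step is a routine instance of the ``penalty outgrows the fixed data'' phenomenon already exploited in Proposition~\ref{pro:sigma_up} and Theorem~\ref{The_first_order_equivalence_feasible}: the sixth-order term inflates the pencil eigenvalue $\mu$ to order $\beta$, which cannot coexist with the $\beta$-independent bound $\mu\le\sigma_{\max}(A)/\sigma_{\min}(M)$ once $\beta\ge\tilde\beta$.
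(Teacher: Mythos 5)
Your proposal is correct, but it takes a genuinely different and considerably longer route than the paper's. The paper never invokes the full vector equation $\nabla h(X)w=0$: it pairs $\nabla h(X)=0$ with the rank-one direction $Xvv^\top$ and applies Lemma \ref{Le_inner_nablah_XQ}, which immediately yields the scalar identity $(\sigma-1)\bigl(\beta\sigma(\sigma+1)-\tfrac{3}{2}\bigl(\tfrac{3}{2}-\tfrac{1}{2}\sigma\bigr)v^\top X^\top AXv\bigr)=0$ for each eigenpair $(\sigma,v)$ of $X^\top MX$; for $\sigma\in(0,1)$ (the range guaranteed by Proposition \ref{pro:sigma_up}) one then only needs the one-sided bound $v^\top X^\top AXv=\sigma\,u^\top M^{-\frac{1}{2}}AM^{-\frac{1}{2}}u\le\sigma\,\sigma_{\max}(A)/\sigma_{\min}(M)\le\sigma\tilde{\beta}/12$ to contradict $\beta\ge\tilde{\beta}$, and no spectral information about $X^\top AX$ is needed. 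Your entire first step --- showing that $w_i^\top X^\top AX w_j=0$ unless $\lambda_i=\lambda_j=1$, so that $w$ is automatically an eigenvector of $X^\top AX$ --- is therefore unnecessary for the stated conclusion, although it is correct and it does buy something the paper's argument does not deliver: the exact pencil identity $Au=\mu Mu$ with $\mu=4\beta(1+\lambda)/(3(3-\lambda))$, a sharper structural description of infeasible stationary points than the inequality the paper extracts. Two small points to tidy: (i) when $\lambda$ has multiplicity you should choose the orthonormal eigenbasis so that $w$ itself is one of the $w_i$, since $b_{ij}=0$ for $j\ne i$ gives $Bw_i=b_{ii}w_i$ only for basis vectors, and the diagonal entries $b_{ii}$ need not agree across a degenerate eigenspace; (ii) your final strict inequality needs $\sigma_{\max}(A)>0$ or, failing that, $\beta>0$, which is the same implicit assumption the paper's own last step makes.
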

\begin{proof}
    Suppose $\sigma$ is an eigenvalue of $X^\top MX$, and $v$ is a corresponding unit eigenvector.
    Then as $X$ is a first-order stationary of \ref{cdfcp}, we have
    \begin{equation}\label{pro3}
        \begin{aligned}
        0=\langle Xvv\tp,\nabla h(X)\rangle\overset{(i)}{=}(\sigma-1)\left(\beta\sigma(\sigma+1)-\frac{3}{2}v\tp X\tp AXv\left(\frac{3}{2}-\frac{1}{2}\sigma\right)\right),
    \end{aligned}
    \end{equation}
    where the equation $(i)$ follows from Lemma \ref{Le_inner_nablah_XQ}. Moreover, it follows from Proposition \ref{pro:sigma_up} that $0\le\sigma\le 1$. 

    Then whenever $0<\sigma< 1$, let $u: = \sigma^{-\frac{1}{2}}M^{\frac{1}{2}}Xv$. It is easy to verify that $\norm{u} = 1$. Then we have 
    \begin{equation*}
        \begin{aligned}
            0={}&\beta\sigma(1+\sigma)-\frac{3}{2}v\tp X\tp AXv\left(\frac{3}{2}-\frac{1}{2}\sigma\right)\\
            ={}& \beta\sigma(1+\sigma)-\frac{3}{2} \sigma \left(\frac{3}{2}-\frac{1}{2}\sigma\right) u\tp M^{-\frac{1}{2}} A M^{-\frac{1}{2}} u \\
            \overset{(i)}{\geq}{}& \beta\sigma(1+\sigma) - \frac{\tilde{\beta}}{12} \cdot \left( \frac{3}{2} \sigma \left(\frac{3}{2}-\frac{1}{2}\sigma\right)\right)\\
            \geq{}& \sigma \left( \beta(1+\sigma) - \frac{\tilde{\beta}}{4}  \right)>0,
        \end{aligned}
    \end{equation*}
   which leads to a contradiction. Here, the inequality $(i)$ holds since 
   $u\tp M^{-\frac{1}{2}} A M^{-\frac{1}{2}} u\le\frac{\sigmamax{A}}{\sigmamin{M}}$ and $\sigmamax{A}$ is a Lipschitz continuous constant  of $\nabla f(X)$.
   Therefore, we can conclude that $\sigma = 0$ or $1$. This completes the proof. 
\end{proof}

Then we present the following theorem to illustrate that any first-order stationary point of \ref{cdfcp} is either a first-order stationary point of \ref{sogse} or a strict saddle point of \ref{cdfcp}. These results directly follow from Proposition \ref{Prop_first_order_equivalence_feasible} and  \ref{pro:saddle}, hence their proof is omitted for simplicity. 

\begin{theorem}\label{thm:refir}
Suppose Assumptions \ref{as:blanket} and \ref{as:s} hold, and $\beta\ge\tilde{\beta}$. If $X \in \Rnp$ is a first-order stationary point of \ref{cdfcp}. Then either of the following facts holds
\begin{enumerate}[(1)]
    \item $X \in \Smnp$, hence is a first-order stationary point of \ref{sogse}.
    \item $X \notin \Smnp$, hence is a strict saddle point of \ref{cdfcp}.
\end{enumerate}
\end{theorem}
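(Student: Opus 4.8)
The statement of Theorem~\ref{thm:refir} is an immediate consolidation of two results already established in the excerpt, so the plan is essentially to observe that nothing new needs to be proved. Let $X \in \Rnp$ be a first-order stationary point of \ref{cdfcp}. The dichotomy is simply whether $X$ lies on the manifold $\Smnp$ or not, and these two cases are handled by Proposition~\ref{Prop_first_order_equivalence_feasible} and Proposition~\ref{pro:saddle} respectively.

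For case (1), when $X \in \Smnp$, Proposition~\ref{Prop_first_order_equivalence_feasible} states that \ref{sogse} and \ref{cdfcp} share the same first-order stationary points over the manifold $\Smnp$, and this holds without any condition on $\beta$. Since $X$ is by hypothesis a first-order stationary point of \ref{cdfcp} lying in $\Smnp$, it is therefore a first-order stationary point of \ref{sogse}. For case (2), when $X \notin \Smnp$, i.e. $X \in \Rnp \setminus \Smnp$, we invoke Proposition~\ref{pro:saddle}: under Assumptions~\ref{as:blanket} and \ref{as:s} and with $\beta \geq \tilde{\beta}$, any such infeasible first-order stationary point of \ref{cdfcp} is a $\frac{\sigma_{\min}(M)}{4}\beta$-strict saddle point of \ref{cdfcp}; in particular it is a strict saddle point. (One should note the minor notational slip in Proposition~\ref{pro:saddle} and its surrounding text, where $\ca{S}_{n,p}$ and $\ca{S}_{M}(n,p)$ are used interchangeably; in the write-up I would simply use $\Smnp$ consistently.)

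Since these two cases are exhaustive and mutually exclusive, the proof reduces to a single sentence citing the two propositions. There is no genuine obstacle here — the hypotheses of Theorem~\ref{thm:refir} (Assumptions~\ref{as:blanket}, \ref{as:s} and $\beta \geq \tilde{\beta}$) are precisely the union of the hypotheses needed by the two propositions, so the logical bookkeeping is trivial. Accordingly, as the excerpt itself already anticipates by saying "their proof is omitted for simplicity," I would write at most: \emph{If $X \in \Smnp$, then Proposition~\ref{Prop_first_order_equivalence_feasible} implies $X$ is a first-order stationary point of \ref{sogse}; otherwise $X \in \Rnp \setminus \Smnp$ and Proposition~\ref{pro:saddle} implies $X$ is a $\frac{\sigma_{\min}(M)}{4}\beta$-strict saddle point of \ref{cdfcp}, hence a strict saddle point.} The only "work" is making sure the constant $\tilde\beta$ threshold and the two assumptions line up, which they do by construction.
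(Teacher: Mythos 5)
Your proposal is correct and matches the paper exactly: the paper itself states that Theorem~\ref{thm:refir} ``directly follow[s] from Proposition \ref{Prop_first_order_equivalence_feasible} and \ref{pro:saddle}'' and omits the proof, which is precisely the two-case dispatch you wrote out. Nothing to add.
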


\subsection{Relationships on second-order stationary points}\label{subsec:so}

In this subsection, we establish the global equivalence between \ref{sogse} and \ref{cdfcp} with respect to the second-order stationary points. 

We begin our analysis with the following auxiliary lemma, which uncovers a crucial characteristic of $\nabla^2 h$. 
\begin{lemma}\label{le:sop_slep}
	Suppose Assumptions \ref{as:blanket} and \ref{as:s} hold. Then for any $X\in\Smnp$ that is a first-order stationary point of \ref{cdfcp}, $T\in\mathcal{T}_{\Smnp}(X)$ and $N\in\mathcal{N}_{\Smnp}(X)$, it holds that
	\begin{equation}\label{eq:tn}
	    \left\langle N,\nabla^2  h(X)[T]\right\rangle=0.
	\end{equation}
	Moreover, suppose $\beta\ge\tilde{\beta}$, then for any non-zero element $N\in\mathcal{N}_{\Smnp}(X)$, it holds that
	\begin{equation}\label{eq:nn}
	    \left\langle N,\nabla^2 h(X)[N] \right\rangle>0.
	\end{equation}
\end{lemma}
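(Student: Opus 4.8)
The plan is to exploit the orthogonal decomposition $\Rnp = \mathcal{T}_{\Smnp}(X) \oplus \mathcal{N}_{\Smnp}(X)$ at a feasible first-order stationary point $X$, where $X\tp MX = I_p$ holds. First I would simplify the Hessian formula \eqref{eq:hessh} at such a point: since $X\tp MX = I_p$, we have $\frac{3}{2}I_p - \frac{1}{2}X\tp MX = I_p$, the term $\beta MD((X\tp MX)^2 - I_p)$ vanishes, and $\mathcal{J}_G(X)[D] = \nabla^2 f(X)[D - X\Psi(X\tp MD)]$, while $G(X) = \nabla f(X)$. This should reduce \eqref{eq:hessh} to a manageable expression of the form $\nabla^2 h(X)[D] = \mathcal{J}_G(X)[D] - \nabla f(X)\Psi(X\tp MD) - MD\Psi(X\tp\nabla f(X)) - MX\Psi(D\tp\nabla f(X)) - MX\Psi(X\tp \mathcal{J}_G(X)[D]) + 4\beta MX\Psi(X\tp MD)$.

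For the cross term \eqref{eq:tn}, I would write $N = XS$ with $S \in \mathbb{S}^p$ and $T \in \mathcal{T}_{\Smnp}(X)$, so $\Psi(T\tp MX) = 0$. The strategy is to compute $\inner{XS, \nabla^2 h(X)[T]}$ term by term, repeatedly using $\Psi(X\tp MT) = 0$ to kill the penalty term $4\beta MX\Psi(X\tp MT) = 0$ and several others, and using $X\tp MX = I_p$ together with the symmetry of $S$ and the first-order stationarity $\nabla f(X) = MX\Psi(X\tp\nabla f(X))$ to cancel what remains. One should pair up the terms involving $\mathcal{J}_G(X)[T]$: $\inner{XS, \mathcal{J}_G(X)[T]} = \inner{\Psi(X\tp MXS), \mathcal{J}_G(X)[T] \text{-related}}$... more precisely $\inner{XS, \mathcal{J}_G(X)[T] - MX\Psi(X\tp\mathcal{J}_G(X)[T])} = \inner{XS - MX S, \ldots}$ should cancel using $X\tp MX = I_p$, i.e. $\inner{XS, \mathcal{J}_G(X)[T]} - \inner{XS, MX\Psi(X\tp\mathcal{J}_G(X)[T])} = \tr(SX\tp\mathcal{J}_G(X)[T]) - \tr(S\Psi(X\tp\mathcal{J}_G(X)[T])) = 0$ since $S$ is symmetric. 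A similar pairing handles the $\nabla f(X)$ terms, using stationarity to write $X\tp\nabla f(X) = \Psi(X\tp\nabla f(X))$ (the skew part is zero at a stationary point, since $\nabla f(X) - MX\Psi(X\tp\nabla f(X)) = 0$ forces $X\tp\nabla f(X) = X\tp MX\Psi(X\tp\nabla f(X)) = \Psi(X\tp\nabla f(X))$, symmetric).

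For \eqref{eq:nn}, I would take $N = XS$ with $S \in \mathbb{S}^p$, $S \neq 0$. After the cancellations, the dominant surviving term should be the penalty contribution $\inner{XS, 4\beta MX\Psi(X\tp MXS)} = 4\beta\inner{XS, MXS} = 4\beta\tr(SX\tp MXS) = 4\beta\tr(S^2) = 4\beta\norm{S}^2$, which is strictly positive and scales with $\beta$. The remaining terms — those coming from $\mathcal{J}_G$, $G(X) = \nabla f(X)$, and the $\Psi(D\tp G(X))$-type terms — are all bounded by a constant (depending on $L_g$, $L_0$, $\kappa(M)$, $\sigmamax{M}$, $p$) times $\norm{S}^2$, using Lemmas \ref{le:normg} and \ref{le:normj} and $X \in \Omega$ (from Proposition \ref{pro:sigma_up}). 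Choosing $\beta \ge \tilde{\beta}$, which exceeds a multiple of $\kappa(M)(pL_g + \sigmamin{M}^{1/2}L_0)/\sigmamin{M}$, ensures $4\beta\norm{S}^2$ strictly dominates this constant, giving $\inner{N, \nabla^2 h(X)[N]} > 0$.

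The main obstacle I anticipate is the bookkeeping in \eqref{eq:tn}: there are several terms of comparable form, and one must carefully track which inner products vanish due to $\Psi(T\tp MX) = 0$, which vanish due to symmetry of $S$ combined with $X\tp MX = I_p$, and which vanish due to first-order stationarity; it is easy to mis-pair them. A secondary concern is making the constant bound in \eqref{eq:nn} explicit enough to verify against the precise form of $\tilde{\beta}$, but since $\tilde{\beta}$ already carries a comfortable factor of $12$, there should be ample slack and I would not need a tight estimate.
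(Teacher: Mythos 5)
Your proposal is correct and follows essentially the same route as the paper: simplify the Hessian at a feasible stationary point, write $N=XS$ with $S$ symmetric, cancel the cross term $\inner{N,\nabla^2 h(X)[T]}$ using the symmetry of $S$, the skew-symmetry of $X\tp MT$, $X\tp MX=I_p$, and the stationarity identity $\nabla f(X)=MX\Psi(X\tp\nabla f(X))$, and then show $\inner{N,\nabla^2 h(X)[N]}$ reduces to $4\beta\norm{S}^2$ minus a term controlled by $\sigmamax{X\tp\nabla f(X)}\norm{S}^2$, which $\beta\ge\tilde\beta$ dominates. The paper in fact gets the exact expression $4\beta\norm{S}^2-3\,\mathrm{tr}(S^2X\tp\nabla f(X))$ before bounding, but this is only a sharper version of the estimate you describe.
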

\begin{proof}
For any $X\in\Smnp$ that is a first-order stationary point of \ref{cdfcp}, it follows from Proposition \ref{Prop_first_order_equivalence_feasible} that 
\begin{equation}\label{eq:hesshpro1}
    \nabla f(X)-M X\Psi({X}^\top \nabla f(X))=0.
\end{equation}
 For any $T\in\mathcal{T}_{\Smnp}(X)$, by the definition of $\mathcal{T}_{\Smnp}(X)$ in \eqref{eq:tx}, we have $T\tp MX+X\tp MT=0$, which implies that $X\tp MT$ is skew-symmetric. Moreover, for any $N\in\mathcal{N}_{\Smnp}(X)$, by the definition of $\mathcal{N}_{\Smnp}(X)$ in \eqref{eq:nx}, there is a symmetric matrix $S\in\mathbb{S}^p$ such that $N=XS$. Then it follows from the expression of $\nabla^2 h(X)$ in \eqref{eq:hessh} that
 \[\begin{aligned}
		\left\langle N, \nabla^2 h(X)[T]\right\rangle
		={}&\left\langle N,\nabla^2 f(X)[T]-MX\Psi({X}^\top \nabla^2 f(X)[T])\right\rangle-\left\langle N, MT\Psi(\nabla f(X)^\top X)\right\rangle
	      \\&-\left\langle N, MX\Psi(\nabla f(X)^\top T)\right\rangle\\
		={}&\operatorname{tr}(S{X}^\top\nabla^2 f(X)[T])-\operatorname{tr}(S\Psi({X}^\top \nabla^2 f(X)[T]))-\operatorname{tr}(S{X}^\top M T\Psi({X}^\top \nabla f(X)))\\
&-\operatorname{tr}(S{X}^\top MX\Psi(T^\top \nabla f(X)))\\
		\overset{(i)}{=}{}&\operatorname{tr}(ST^\top M X\Psi({X}^\top \nabla f(X)))-\operatorname{tr}(S\Psi(T^\top \nabla f(X)))\\
		\overset{(ii)}{=}{}&\operatorname{tr}(ST^\top \nabla f(X))-\operatorname{tr}(ST^\top \nabla f(X))=0,
	\end{aligned}\]
 which proves the equality \eqref{eq:tn}. Here, the equation $(i)$ follows from the fact that $X\tp MT$ is skew-symmetric, and the equation $(ii)$ directly follows from \eqref{eq:hesshpro1}.

In particular, if $N=XS\neq0$,  which implies that $\|S\|>0$, then it follows again from the expression of $\nabla^2 h(X)$ in \eqref{eq:hessh} that
\[\begin{aligned}
	&\left\langle N,\nabla^2 h(X)[N]\right\rangle \\
 ={}&\left\langle N,\nabla^2 f(X)[N]-MX\Psi({X}^\top \nabla^2 f(X)[N])\right\rangle-\left\langle N,\nabla f(X)S\right\rangle\\
 &-\left\langle N,	MN\Psi({X}^\top \nabla f(X)) \right\rangle -\left\langle N, 	MX\Psi(N^\top\nabla f(X))\right\rangle+4\beta\left\langle N,MN \right\rangle\\
 ={}&\operatorname{tr}(S{X}^\top\nabla^2 f(X)[N])-\operatorname{tr}(S\Psi({X}^\top \nabla^2 f(X)[N]))
 -\operatorname{tr}\left( SX\tp\nabla f(X)S\right)\\
 &-\operatorname{tr}\left( S^2\Psi(\nabla f(X)^\top X)\right)-\operatorname{tr}\left( S\Psi(SX^\top\nabla f(X))\right)+4\beta\operatorname{tr}\left( S^2\right)\\
	={}&4\beta\norm{S}^2-3\operatorname{tr}(S^2{X}^\top\nabla f(X))
	={}4\beta\norm{S}^2-3\sigmamax{X^\top\nabla f(X)}\norm{S}^2\\
	\overset{(i)}{\ge}{}&\mybrace{4\beta-\frac{2pL_g+L_0\sigma_{\min}^{\frac{1}{2}}(M)}{\sigmamin{M}}}\norm{S}^2\overset{(ii)}{>}0,
\end{aligned}\]
where the inequality $(i)$ follows form the upper bound of $\sigmamax{X^\top\nabla f(X)}$ given in \eqref{eq:normlambda}, and the inequality $(ii)$ directly follows from the fact that $\beta\ge\tilde{\beta}>\frac{2pL_g+L_0\sigma_{\min}^{\frac{1}{2}}(M)}{4\sigmamin{M}}$. This completes the proof.
\end{proof}

The following theorem illustrates that \ref{cdfcp} and \ref{sogse} share the same second-order stationary points over $\Rnp$. 
\begin{theorem}\label{th:sopr_b1}
    Suppose Assumptions \ref{as:blanket} and \ref{as:s} hold, and $\beta\ge\tilde{\beta}$. Then \ref{cdfcp} and \ref{sogse} share the same second-order stationary points over $\Rnp$.
\end{theorem}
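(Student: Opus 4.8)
The plan is to combine the first-order theory of Section~\ref{subsec:fo} with the structural facts about $\nabla^2 h$ collected in Lemma~\ref{le:sop_slep}, thereby reducing the whole statement to one curvature computation along tangent directions at a feasible first-order stationary point.

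For the inclusion ``second-order stationary point of \ref{cdfcp} $\Rightarrow$ second-order stationary point of \ref{sogse}'', let $X$ be a second-order stationary point of \ref{cdfcp}. Then $X$ is a first-order stationary point of \ref{cdfcp}, so Theorem~\ref{thm:refir} leaves two possibilities: either $X\in\Smnp$ (and then $X$ is already a first-order stationary point of \ref{sogse}), or $X$ is a strict saddle point of \ref{cdfcp}. The latter is impossible, since a strict saddle point admits a direction $D$ with $\inner{D,\nabla^2 h(X)[D]}<0$, contradicting Definition~\ref{def:occd}. Hence $X\in\Smnp$ is a first-order stationary point of \ref{sogse}, and it remains to verify the curvature condition of Definition~\ref{def:ocso}. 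For any $T\in\mathcal{T}_{\Smnp}(X)$, second-order stationarity of \ref{cdfcp} gives $\inner{T,\nabla^2 h(X)[T]}\ge 0$; I would then simplify $\inner{T,\nabla^2 h(X)[T]}$ by substituting the feasibility $X\tp MX=I_p$ into \eqref{eq:hessh} — which makes $\mathcal{A}(X)=X$, $G(X)=\nabla f(X)$, $\mathcal{J}_G(X)[T]=\nabla^2 f(X)[T]$, and kills both sixth-order penalty terms — together with the first-order relation $\nabla f(X)=MX\Psi(X\tp\nabla f(X))$, the identity $X\tp\nabla f(X)=\Psi(X\tp\nabla f(X))$ at a feasible first-order stationary point, and the skew-symmetry of $X\tp MT$ for $T$ tangent (which annihilates every remaining term of the shape $\inner{T,MX\Psi(\cdot)}$). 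The result of this simplification is precisely the curvature quantity that defines a second-order stationary point of \ref{sogse} in Definition~\ref{def:ocso}, which yields the claim.

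For the converse, let $X$ be a second-order stationary point of \ref{sogse}. Then $X\in\Smnp$ is a first-order stationary point of \ref{sogse}, hence of \ref{cdfcp} by Proposition~\ref{Prop_first_order_equivalence_feasible}, and running the reduction of the previous paragraph in reverse, the curvature condition of Definition~\ref{def:ocso} translates into $\inner{T,\nabla^2 h(X)[T]}\ge 0$ for all $T\in\mathcal{T}_{\Smnp}(X)$. To promote this to $\inner{D,\nabla^2 h(X)[D]}\ge 0$ for all $D\in\Rnp$, I would use the splitting $\Rnp=\mathcal{T}_{\Smnp}(X)\oplus\mathcal{N}_{\Smnp}(X)$, which is legitimate because on $\Smnp$ the matrix $X$ has full column rank, so $\mathcal{N}_{\Smnp}(X)$ has dimension $p(p+1)/2$, meets $\mathcal{T}_{\Smnp}(X)$ only in $0$, and is complementary to it by the dimension count implicit in \eqref{eq:tx}. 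Writing $D=T+N$ with $T\in\mathcal{T}_{\Smnp}(X)$ and $N\in\mathcal{N}_{\Smnp}(X)$, and using the self-adjointness of $\nabla^2 h(X)$ together with \eqref{eq:tn} to cancel the cross terms, one gets $\inner{D,\nabla^2 h(X)[D]}=\inner{T,\nabla^2 h(X)[T]}+\inner{N,\nabla^2 h(X)[N]}$; the first summand is nonnegative by the reduction and the second by \eqref{eq:nn} (this is where the hypothesis $\beta\ge\tilde\beta$ is used), so $X$ is a second-order stationary point of \ref{cdfcp}.

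The main obstacle is the curvature computation: expanding $\inner{T,\nabla^2 h(X)[T]}$ from \eqref{eq:hessh} and verifying that, after substituting feasibility and first-order stationarity, every penalty-related term and every term carrying the factor $\Psi(X\tp MT)$ cancels, leaving exactly the second-order form of \ref{sogse}. The delicate points are the bookkeeping with $M$, $M^{-1}$ and the symmetrizations $\Psi(\cdot)$, and the repeated use of the identities $\mathcal{A}(X)=X$, $\mathcal{J}_G(X)[T]=\nabla^2 f(X)[T]$ and $X\tp\nabla f(X)=\Psi(X\tp\nabla f(X))$, which hold only at a feasible first-order stationary point. Everything else — excluding the strict-saddle case, the tangent/normal splitting, and the positivity of $\nabla^2 h(X)$ on $\mathcal{N}_{\Smnp}(X)$ — follows immediately from Theorem~\ref{thm:refir} and Lemma~\ref{le:sop_slep}.
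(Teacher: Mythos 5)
Your proposal is correct and follows essentially the same route as the paper: Theorem \ref{thm:refir} (equivalently, Propositions \ref{pro:sigma_up}--\ref{pro:saddle}) to force feasibility of a second-order stationary point of \ref{cdfcp}, the reduction of $\inner{T,\nabla^2 h(X)[T]}$ to $\inner{T,\nabla^2 f(X)[T]}-\inner{T,MTX\tp\nabla f(X)}$ on tangent directions via feasibility and skew-symmetry of $X\tp MT$, and the splitting $D=T+N$ with Lemma \ref{le:sop_slep} handling the cross and normal terms for the converse. No gaps.
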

\begin{proof}
For any $X\in\Smnp$ and $T\in\mathcal{T}_{\Smnp}(X)$, by the definition of $\mathcal{T}_{\Smnp}(X)$ in \eqref{eq:tx}, we have $\Psi(T\tp MX)=0$. Then it follows from the expression of $\nabla^2 h(X)$ in \eqref{eq:hessh} that
\begin{equation}\label{eq:s1}
    \begin{aligned}
\inner{ T,\nabla^2 h(X)[T]}
={}&\inner{T,\nabla^2 f(X)[T]}-\inner{T,MX\Psi({X}^\top \nabla^2 f(X)[T])}\\
&-\inner{T,MT\Psi(X\tp\nabla f(X))}-\inner{T,MX\Psi(T\tp\nabla f(X))}\\
\overset{(i)}{=}{}&\inner{T,\nabla^2 f(X)[T]}-\left\langle T,M T{X}^\top\nabla f(X)\right\rangle,
\end{aligned}
\end{equation}
where the equation $(i)$ utilizes the fact that $\inner{T,MXS}=\operatorname{tr}(T\tp MXS)=\operatorname{tr}(\Psi(T\tp MX)S)=0$ for any $S\in\mathbb{S}^p.$

For any second-order stationary point $X\in\Rnp$ of \ref{cdfcp}, it follows from Definition \ref{def:occd} that $X$ is a first-order stationary of \ref{cdfcp} and $\left\langle D, \nabla^2 h(X)[D] \right\rangle\ge0$ for any $D\in\Rnp$. Additionally, by theorem \ref{thm:refir}, we can deduce that $X$ is a feasible point and thus a first-order stationary point of \ref{sogse}. Then for any $T\in\mathcal{T}_{\Smnp}(X)$, it follows from \eqref{eq:s1} that 
\[\left\langle T, \nabla^2 f(X)[T]
	\right\rangle-\left\langle T,M T{X}^\top\nabla f(X)\right\rangle=\left\langle T,\nabla^2 h(X)[T] \right\rangle\ge0.\]
Hence, according to Definition \ref{def:ocso}, we can conclude that $X$ is a second-order stationary point of \ref{sogse}.

 Furthermore, for any $X\in\Smnp$ that is a second-order stationary point of \ref{sogse}, it follows from Definition \ref{def:ocso} that $X$ is a first-order stationary of \ref{sogse} and $\left\langle T, \nabla^2 f(X)[T]
	\right\rangle-\left\langle T,M T{X}^\top\nabla f(X)\right\rangle \ge0$ for any $T\in\mathcal{T}_{\Smnp}(X)$. Additionally, by theorem \ref{Prop_first_order_equivalence_feasible}, we can deduce that $X$ is also a first-order stationary point of \ref{cdfcp}. Then for any $T\in\mathcal{T}_{\Smnp}(X)$, it follows from \eqref{eq:s1} that 
\[\left\langle T,\nabla^2 h(X)[T] \right\rangle=\left\langle T, \nabla^2 f(X)[T]
	\right\rangle-\left\langle T,M T{X}^\top\nabla f(X)\right\rangle\ge0.\]
 It is easy to verify that $\bb{R}^{n\times p}$ is the direct sum of the subspaces $\mathcal{T}_{\Smnp}(X)$ and $\mathcal{N}_{\Smnp}(X)$. That is to say, for any $D\in\mathbb{R}^{n\times p}$, there is an element $T$ from $\mathcal{T}_{\Smnp}(X)$ and an element $N$ from $\mathcal{N}_{\Smnp}(X)$ such that $D=T+N$. Then it follows from Lemma \ref{le:sop_slep} that
 \[\begin{aligned}
     \left\langle D, \nabla^2 h(X)[D] \right\rangle
     =\left\langle T,\nabla^2 h(X)[T] \right\rangle+2\left\langle N,\nabla^2 h(X)[T] \right\rangle+\left\langle N,\nabla^2 h(X)[N] \right\rangle
     \ge0,
 \end{aligned}\]
 Hence, by Definition \eqref{def:ocso}, $X$ is also a second-order stationary point of \ref{cdfcp}. This completes the proof.

\end{proof}

\subsection{Relationships on $\varepsilon$-first-order stationary points }\label{subsec:approxfo}

In this subsection, we show that \ref{sogse} and \ref{cdfcp} share the same $\mathcal{O}(\varepsilon)$-first-order stationary points in the neighborhood of $\Smnp$. 

To set the stage for our main results, we begin with a proposition that offers a simple analysis of the stationary of \ref{cdfcp}.
\begin{proposition}
     Suppose Assumption \ref{as:blanket} holds, and $\beta\ge \tilde{\beta}$. Then for any $X\in\Omega_{\frac{1}{6}}$, it holds that
     \begin{equation}\label{eq:feabound}
         \|\nabla h(X)\|\ge \frac{1}{2\kappa^\frac{1}{2}(M)}\left\|
   \nabla g(X)\right\|+\frac{\beta\sigma_{\min}^\frac{1}{2}(M)}{2}\left\|X^\top MX-I_p\right\|.
     \end{equation}
\end{proposition}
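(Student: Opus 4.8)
The plan is to write $\nabla h$ as $\nabla g$ plus the gradient of the penalty term, extract from $\norm{\nabla h(X)}$ one lower bound proportional to $\norm{\nabla g(X)}$ and one proportional to the feasibility residual $\norm{X\tp MX-I_p}$, and then average the two. The starting identity is, by Lemma~\ref{Le_formulation_gradient_f} and the chain rule for $g=f\circ\A$, $\nabla g(X)=G(X)(\tfrac32 I_p-\tfrac12 X\tp MX)-MX\Psi(X\tp G(X))$, hence $\nabla h(X)=\nabla g(X)+\beta MX((X\tp MX)^2-I_p)$. Writing $S:=X\tp MX$, on $\Omega_{1/6}$ we have $\tfrac56 I_p\preceq S\preceq\tfrac76 I_p$, so $X$ has full column rank and all estimates of Section~\ref{subsec:cdfcp_properties} carry over with $\sigma_{\min}(S),\sigma_{\max}(S)\in[\tfrac56,\tfrac76]$; in particular \eqref{eq:normlambda}, Lemma~\ref{le:normg} and the definition of $\tilde{\beta}$ give $\tfrac32\sigma_{\max}(X\tp G(X))\le\tfrac14\beta$ for $\beta\ge\tilde{\beta}$.

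For the feasibility bound I would apply Lemma~\ref{Le_inner_nablah_XQ} with the commuting choice $Q=S-I_p$, giving $\inner{X(S-I_p),\nabla h(X)}=\tr((\beta S(S+I_p)-\tfrac32\Lambda(X))(S-I_p)^2)$. Since $S(S+I_p)\succeq\sigma_{\min}(S)(\sigma_{\min}(S)+1)I_p\succeq\tfrac{55}{36}I_p$ and $\tfrac32\sigma_{\max}(X\tp G(X))\le\tfrac14\beta$ on $\Omega_{1/6}$, the right-hand side is at least $\tfrac54\beta\norm{S-I_p}^2$; Cauchy--Schwarz together with $\norm{X(S-I_p)}\le\sigma_{\max}(X)\norm{S-I_p}\le(7/(6\sigma_{\min}(M)))^{1/2}\norm{S-I_p}$ (see \eqref{eq:sigmax}) then yields $\norm{\nabla h(X)}\ge\beta\sigma_{\min}^{1/2}(M)\norm{S-I_p}$, the leftover numerical factor being $\ge1$.

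For the gradient bound I would decompose $\Rnp=\mathcal{R}\oplus\mathcal{R}^{\perp}$ with $\mathcal{R}:=\{MXB:B\in\Rpp\}$ and orthogonal projector $P$. Since the penalty gradient $\beta MX(S^2-I_p)$ lies in $\mathcal{R}$, we have $(I-P)\nabla h(X)=(I-P)\nabla g(X)$ and hence $\norm{\nabla h(X)}^2=\norm{P\nabla h(X)}^2+\norm{(I-P)\nabla g(X)}^2$; using $\kappa(M)\ge1$, it suffices to show $\norm{P\nabla h(X)}\ge\kappa^{-1/2}(M)\norm{P\nabla g(X)}$. Writing $P\nabla g(X)=MX\Gamma$ and $W:=X\tp M^2X$ (so that $\norm{MXB}=\norm{W^{1/2}B}$ and $\tfrac56\sigma_{\min}(M)I_p\preceq W\preceq\tfrac76\sigma_{\max}(M)I_p$), this becomes $\norm{W^{1/2}(\Gamma+\beta(S^2-I_p))}\ge\kappa^{-1/2}(M)\norm{W^{1/2}\Gamma}$. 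When $\beta\norm{W^{1/2}(S^2-I_p)}\ge2\norm{W^{1/2}\Gamma}$ the triangle inequality already gives $\norm{W^{1/2}(\Gamma+\beta(S^2-I_p))}\ge\norm{W^{1/2}\Gamma}$; in the complementary regime $\norm{W^{1/2}\Gamma}$ is comparable with the penalty size $\beta\norm{W^{1/2}(S^2-I_p)}$, which by the feasibility bound is itself at most a constant times $\norm{\nabla h(X)}$, and a careful accounting closes it. Averaging the two bounds then gives $\norm{\nabla h(X)}=\tfrac12\norm{\nabla h(X)}+\tfrac12\norm{\nabla h(X)}\ge\frac{1}{2\kappa^{1/2}(M)}\norm{\nabla g(X)}+\frac{\beta\sigma_{\min}^{1/2}(M)}{2}\norm{X\tp MX-I_p}$.

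The hard part will be the complementary regime of the gradient bound: ruling out near-cancellation of $\nabla g(X)$ against the penalty gradient $\beta MX(S^2-I_p)$. That is precisely where confinement to $\Omega_{1/6}$ is indispensable — it keeps $S$, $W$ and $C=\tfrac32 I_p-\tfrac12 S$ uniformly well-conditioned and $\norm{G(X)}$, $\norm{X}$ uniformly bounded — and where the quantitative feasibility estimate must be fed back to control the penalty contribution in terms of $\norm{\nabla h(X)}$.
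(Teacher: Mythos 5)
Your feasibility half is fine, and is in fact a cleaner route than the paper's: applying Lemma~\ref{Le_inner_nablah_XQ} with $Q=X\tp MX-I_p$, using $\tfrac32\sigma_{\max}(X\tp G(X))\le\tfrac14\beta$ and Cauchy--Schwarz does give $\|\nabla h(X)\|\ge\beta\sigma_{\min}^{1/2}(M)\|X\tp MX-I_p\|$ on $\Omega_{1/6}$. The gap is in the gradient half. The intermediate claim $\|P\nabla h(X)\|\ge\kappa^{-1/2}(M)\|P\nabla g(X)\|$ is not established in the complementary regime $\beta\|W^{1/2}(S^2-I_p)\|<2\|W^{1/2}\Gamma\|$, and as stated it is doubtful: nothing in your argument excludes $\Gamma\approx-\beta(S^2-I_p)$, i.e.\ near-cancellation of $P\nabla g$ against the penalty gradient inside $\mathcal{R}$, in which case $\|P\nabla h\|$ can be arbitrarily small while $\|P\nabla g\|\approx\beta\|W^{1/2}(S^2-I_p)\|$ stays of order one. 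Your remark that in this regime ``$\|W^{1/2}\Gamma\|$ is comparable with the penalty size'' is backwards — the regime only gives an upper bound on the penalty by $\|W^{1/2}\Gamma\|$, not the reverse — so feeding the feasibility bound back in does not control $\|P\nabla g\|$ by $\|\nabla h\|$ as you suggest. The only tool that rules out (more precisely, quantitatively limits) this cancellation is the identity $\inner{XQ,\nabla g(X)}=-\tfrac32\operatorname{tr}(\Lambda(X)(X\tp MX-I_p)Q)$ from Lemma~\ref{Le_inner_nablah_XQ}, which shows the dangerous inner product is $O(\|X\tp MX-I_p\|^2)$ with a $\beta$-independent constant; you use this nowhere in the gradient half, and without it the ``careful accounting'' cannot be closed (and even with it, the constants you would obtain along your route degrade below $\tfrac12\kappa^{-1/2}(M)$).

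The paper avoids the case split altogether: it bounds $\|\nabla h(X)\|^2\ge\sigma_{\min}(M)\operatorname{tr}\bigl(\nabla h(X)\tp M^{-1}\nabla h(X)\bigr)$, expands the square with $\nabla h=\nabla g+\beta MX((X\tp MX)^2-I_p)$ so that the $M^{-1}$ cancels against the penalty's leading $M$, rewrites the cross term exactly via the identity above as $-3\sigma_{\min}(M)\beta\operatorname{tr}\bigl(\Lambda(X)(X\tp MX+I_p)(X\tp MX-I_p)^2\bigr)$, and absorbs it into the positive $\beta^2\sigma_{\min}(M)\operatorname{tr}\bigl(X\tp MX((X\tp MX)^2-I_p)^2\bigr)$ term using $\beta\ge\tilde\beta$. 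This yields $\|\nabla h\|^2\ge\kappa^{-1}(M)\|\nabla g\|^2+\beta^2\sigma_{\min}(M)\|X\tp MX-I_p\|^2$ in one stroke, from which the stated inequality follows by $\sqrt{a^2+b^2}\ge\tfrac12(a+b)$. To repair your proof you would need to import that cross-term identity into the gradient half (at which point you have essentially reproduced the paper's argument), or replace the projection claim with a correct one.
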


\begin{proof}
  
According to the definition of $g(X)$ in \eqref{eq:defgG}, we have $h(X)=g(X)+\frac{\beta}{6}\mathrm{tr}( X\tp M X((X\tp MX)^2 - 3I_p))$. Therefore, it is easy to verify that $\nabla h(X)=\nabla g(X)+\beta MX((X\tp MX)^2-I_p)$. Then we have
\begin{equation}\label{eq:sta1}
    \begin{aligned}
        \norm{\nabla h(X)}^2\ge{}&\sigma_{\min}(M)\operatorname{tr}\left(\nabla h(X)^\top M^{-1}\nabla h(X)\right)\\
        \ge{}& \frac{1}{\kappa(M)}\left\|\nabla g(X)\right\|^2+2\sigma_{\min}(M)\beta\left\langle X\left(\left(X\tp MX\right)^2-I_p\right),\nabla g(X)\right\rangle\\
    &+\sigma_{\min}(M)\beta^2\operatorname{tr}\left(X\tp MX\left(\left(X\tp MX\right)^2-I_p\right)^2\right).\\
    \end{aligned}
\end{equation}
It follows from  Lemma \ref{Le_inner_nablah_XQ} that for any $Q\in\mathbb{S}^p$, 
\begin{equation}\label{eq:sta2}
    \begin{aligned}
    \inner{XQ, \nabla g(X)} ={}&\inner{XQ, \nabla h(X)}-\inner{XQ,-\beta MX\left(\left(X\tp MX\right)^2-I_p\right)}\\
    = {}&- \frac{3}{2}\mathrm{tr}\left( \Lambda(X)(X\tp MX - I_p)Q  \right).
\end{aligned}
\end{equation}
Combining \eqref{eq:sta1} with \eqref{eq:sta2}, we have
 \[\begin{aligned}
 \norm{\nabla h(X)}^2
    \ge{}&  \frac{1}{\kappa(M)}\left\|\nabla g(X)\right\|^2-3\sigma_{\min}(M)\beta\operatorname{tr}\left(\Lambda(X)\left(X\tp MX+I_p\right)\left(X\tp MX-I_p\right)^2\right)\\
    &+\sigma_{\min}(M)\beta^2\operatorname{tr}\left(X\tp MX\left(\left(X\tp MX\right)^2-I_p\right)^2\right)\\
     \overset{(i)}{\ge} {}&  \frac{1}{\kappa(M)}\left\|\nabla g(X)\right\|^2-3\sigma_{\min}(M)\beta\sigmamax{\Lambda(X)(X\tp MX+I_p)}\|X\tp MX-I_p\|^2\\
    &+\sigma_{\min}(M)\beta^2\sigmamin{X\tp MX\left(X\tp MX+I_p\right)^2}\|X\tp MX-I_p\|^2\\
    \overset{(ii)}{\ge} {}&  \frac{1}{\kappa(M)}\left\|\nabla g(X)\right\|^2+\sigma_{\min}(M)\beta(\frac{14}{5}\beta-\frac{7}{2}\sigmamax{X\tp G(X)})\left\|X\tp MX-I_p\right\|^2\\
    \overset{(iii)}{\ge}   {}& \frac{1}{\kappa(M)}\left\|\nabla g(X)\right\|^2+\beta^2\sigma_{\min}(M)\left\|X^\top MX-I_p\right\|^2
    \end{aligned}
    \]
where the inequality $(i)$ utilizes the fact that $\tr(AB^2)\le\sigmamax{A}\norm{B}^2$ holds  for any $A\in\mathbb{R}^{p\times p}$, $B\in\mathbb{S}^p$, the inequality $(ii)$ uses the fact that $\frac{5}{6}\le\sigmamin{X\tp MX}\le\sigmamax{X\tp MX}\le\frac{7}{6}$ holds for any $X\in\Omega_{\frac{1}{6}}$, and the inequality $(iii)$ follows from the upper bound of $\sigmamax{X\tp G(X)}$ given in \eqref{eq:normlambda}.
Therefore, we can finally conclude that 
\[\|\nabla h(X)\| \ge \frac{1}{2\kappa^\frac{1}{2}(M)}\left\|
    \nabla g(X)\right\|+\frac{\beta\sigma_{\min}^\frac{1}{2}(M)}{2}\left\|X^\top MX-I_p\right\|,\]
which completes the proof. 
\end{proof}

The following theorem exhibits the equivalence between the stationary of \ref{sogse} and that of \ref{cdfcp}.
\begin{theorem}\label{th:star_bx}
    Suppose Assumption \ref{as:blanket} holds, and $\beta\ge \tilde{\beta}$. Then for any $X\in\Omega_{\frac{1}{6}}$, it holds that 
    \[\norm{L(X)}+7\beta\sigma_{\max}^\frac{1}{2}(M)\norm{X^\top MX-I_p}\ge\|\nabla h(X)\|\ge \frac{1}{2\kappa^{\frac{1}{2}}(M)}\norm{L (X)}+\frac{\sigma^{\frac{1}{2}}_{\min}(M)\beta}{4}\norm{X^\top MX-I_p}.\]
\end{theorem}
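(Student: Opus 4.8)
The plan is to deduce both inequalities from the preceding proposition, which for every $X\in\Omega_{\frac{1}{6}}$ already supplies
\[\|\nabla h(X)\|\ge\frac{1}{2\kappa^{1/2}(M)}\|\nabla g(X)\|+\frac{\beta\sigma_{\min}^{1/2}(M)}{2}\|X^\top MX-I_p\|,\]
together with the identity $\nabla h(X)=\nabla g(X)+\beta MX((X^\top MX)^2-I_p)$ used in its proof. The only genuinely new ingredient needed is a comparison of $\nabla g(X)$ with $L(X)$ on $\Omega_{\frac{1}{6}}$: I would show there is a constant $C$ with $C\le\tfrac12\sigma_{\min}^{1/2}(M)\tilde{\beta}$ such that $\|\nabla g(X)-L(X)\|\le C\,\|X^\top MX-I_p\|$ for all $X\in\Omega_{\frac{1}{6}}$.

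To establish this comparison I would subtract the explicit formulas. Writing $E:=X^\top MX-I_p$ and $\Delta:=G(X)-\nabla f(X)$, and using $\tfrac32 I_p-\tfrac12 X^\top MX=I_p-\tfrac12 E$, the expression \eqref{eq:gradh} of $\nabla h$ with the $\beta$-term removed, together with the definition of $L(X)$, gives after a short manipulation $\nabla g(X)-L(X)=\Delta-\tfrac12 G(X)E-MX\Psi(X^\top\Delta)$. Since $\mathcal{A}(X)-X=-\tfrac12 XE$, the $L_g$-Lipschitz continuity of $\nabla f$ yields $\|\Delta\|\le\tfrac12 L_g\sigma_{\max}(X)\|E\|$; then I would bound $\sigma_{\max}(X)$, $\sigma_{\max}(MX)$ and $\|G(X)\|$ on $\Omega_{\frac{1}{6}}$ using $\sigma_{\max}(X^\top MX)\le\tfrac76$, $\sigma_{\max}^2(X)\le\sigma_{\max}(X^\top MX)/\sigma_{\min}(M)$, and Lemma \ref{le:normg}, exactly as in the proof of Proposition \ref{pro:sigma_up}. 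Collecting the three resulting terms produces $C\|E\|$ with $C=\mathcal{O}\big(pL_g\kappa(M)/\sigma_{\min}^{1/2}(M)+L_0\big)$, and a routine comparison with the definition of $\tilde{\beta}$ gives $C\le\tfrac12\sigma_{\min}^{1/2}(M)\tilde{\beta}$.

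Given the comparison, the lower bound follows by inserting $\|\nabla g(X)\|\ge\|L(X)\|-C\|E\|$ into the proposition and absorbing $\tfrac{C}{2\kappa^{1/2}(M)}\|E\|$ into $\tfrac{\beta\sigma_{\min}^{1/2}(M)}{2}\|E\|$: since $C\le\tfrac12\sigma_{\min}^{1/2}(M)\tilde{\beta}\le\tfrac12\sigma_{\min}^{1/2}(M)\beta$ and $\kappa(M)\ge1$, one has $\tfrac{C}{2\kappa^{1/2}(M)}\le\tfrac{\beta\sigma_{\min}^{1/2}(M)}{4}$, leaving the coefficient $\tfrac{\sigma_{\min}^{1/2}(M)\beta}{4}$ in front of $\|E\|$ and $\tfrac{1}{2\kappa^{1/2}(M)}$ in front of $\|L(X)\|$. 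For the upper bound, I would write $\nabla h(X)=L(X)+(\nabla g(X)-L(X))+\beta MX((X^\top MX)^2-I_p)$ and apply the triangle inequality: the middle term contributes $C\|E\|\le\tfrac12\sigma_{\max}^{1/2}(M)\beta\|E\|$, while, using $(X^\top MX)^2-I_p=(X^\top MX+I_p)E$ and $\sigma_{\max}(MX)\le(\tfrac76)^{1/2}\sigma_{\max}^{1/2}(M)$, $\sigma_{\max}(X^\top MX)+1\le\tfrac{13}{6}$ on $\Omega_{\frac{1}{6}}$, the penalty term is at most $\tfrac{13}{6}\big(\tfrac76\big)^{1/2}\beta\sigma_{\max}^{1/2}(M)\|E\|$; the two contributions add up to strictly less than $7\beta\sigma_{\max}^{1/2}(M)\|E\|$.

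The main obstacle is purely bookkeeping: ensuring that the accumulated constant in $\|\nabla g(X)-L(X)\|\le C\|E\|$ really satisfies $C\le\tfrac12\sigma_{\min}^{1/2}(M)\tilde{\beta}$, since this is precisely what makes the two absorptions land on the stated coefficients $\tfrac14$ and $7$. This requires a little care with operator-versus-Frobenius inequalities such as $\|AB\|_{\mathrm{F}}\le\|A\|_{\mathrm{op}}\|B\|_{\mathrm{F}}$ and with the two-sided bound $\tfrac56\le\sigma_{\min}(X^\top MX)\le\sigma_{\max}(X^\top MX)\le\tfrac76$ available on $\Omega_{\frac{1}{6}}$, but involves no difficulty beyond the estimates already carried out in the preceding lemmas.
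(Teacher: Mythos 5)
Your proposal is correct and follows essentially the same route as the paper: the paper likewise starts from the preceding proposition's bound, writes $\nabla g(X)-L(X)$ as exactly your three terms $\Delta-\tfrac12 G(X)E-MX\Psi(X^\top\Delta)$, bounds it by a constant times $\|X^\top MX-I_p\|$ using $\mathcal{A}(X)-X=-\tfrac12 XE$ and the same singular-value estimates on $\Omega_{\frac{1}{6}}$, and then performs the same two triangle-inequality absorptions to land on the coefficients $\tfrac14$ and $7$. No substantive difference.
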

\begin{proof}
According to the expression of  $L(X)$ in \eqref{eq:regrad}, for any $X\in\Omega_{\frac{1}{6}}$ we have
\[\begin{aligned}
    &\norm{\nabla g(X)-L(X)}\\    
    \overset{}{=}{}&\left\|\nabla f(\A(X))-f(X)+\frac{1}{2}\nabla f(\A(X))(X^\top MX-I_p)-(MX\Psi\mybrace{X\tp(\nabla f(\A(X))-\nabla f(X))}\right\|\\
    \overset{(i)}{\le}{}& L_g\norm{\A(X)-X}+\frac{1}{2}\norm{\nabla f(\A(X))}\norm{X^\top MX-I_p}+\sigmamax{M}\sigma_{\max}^2(X)L_g\norm{\A(X)-X}\\
    \overset{(ii)}{\le}{}& L_g\frac{\sigmamax{X}}{2}\norm{X^\top MX-I_p}+\frac{1}{2}(L_g\frac{13p\sigmamax{X}}{12}+L_0)\norm{X^\top MX-I_p}\\
    &+\sigmamax{M}\sigma_{\max}^2(X)L_g\frac{\sigmamax{X}}{2}\norm{X^\top MX-I_p}\\
    \overset{(iii)}{\le}{}& \frac{\kappa(M)(3(p+1)L_g+\sigma^{\frac{1}{2}}_{\min}(M)L_0)}{2\sigma^{\frac{1}{2}}_{\min}(M)}\le\frac{\beta\sigma_{\min}^{\frac{1}{2}}(M)}{2}\norm{X^\top MX-I_p}
\end{aligned}\]
 where the inequality $(i)$ follows from the $L_g$-Lipschitz continuity of $\nabla f(X)$. Moreover, the inequality $(ii)$ holds since 
\[\norm{\A(X)-X}=\norm{X\mybrace{\frac{3}{2}I_p-\frac{1}{2}X\tp MX}-X}\le\frac{\sigmamax{X}}{2}\norm{X\tp M X-I_p}\le\frac{\sigmamax{X}}{12},\]
and 
$\norm{\nabla f(\A(X))}\le L_g\norm{\A(X)}+L_0\le L_g(\norm{\A(X)-X}+\norm{X})+L_0\le L_g\frac{13p\sigmamax{X}}{12}+L_0$. In addition, the inequality $(iii)$ follows form $\sigma_{\max}^2(X)\le\frac{\sigmamax{X\tp MX}}{\sigmamin{M}}\le\frac{7}{6\sigmamin{M}}$.

 Then by \eqref{eq:feabound}, we obtain
\[\begin{aligned}
    \|\nabla h(X)\|{}&\ge\frac{1}{2\kappa^{\frac{1}{2}}(M)}\norm{\nabla g(X)}+\frac{\sigma_{\min}^{\frac{1}{2}}(M)\beta}{2}\norm{X^\top MX-I_p}\\
    &\ge\frac{1}{2\kappa^{\frac{1}{2}}(M)}(\norm{L(X)}-\norm{L(X)-\nabla g (X)})+\frac{\sigma^{\frac{1}{2}}_{\min}(M)\beta}{2}\norm{X^\top MX-I_p} \\
  & \ge\frac{1}{2\kappa^{\frac{1}{2}}(M)}\norm{L(X)}-\frac{\beta\sigma^{\frac{1}{2}}_{\min}(M)}{4\kappa^{\frac{1}{2}}(M)}\norm{X^\top MX-I_p}+\frac{\sigma^{\frac{1}{2}}_{\min}(M)\beta}{2}\norm{X^\top MX-I_p} \\
  &\ge\frac{1}{2\kappa^{\frac{1}{2}}(M)}\norm{L(X)}+\frac{\sigma^{\frac{1}{2}}_{\min}(M)\beta}{4}\norm{X^\top MX-I_p}.
\end{aligned}\]
On the other hand, we have 
\[\begin{aligned}
   &\|\nabla h(X)\|\le\norm{\nabla g(X)}+\beta\mybrace{\tr\mybrace{\left(\left(X\tp MX\right)^2-I_p\right)X^\top M^2 X\left(\left(X\tp MX\right)^2-I_p\right)}}^\frac{1}{2}\\
    \le&\norm{L(X)}+\norm{\nabla g(X)-L (X)}
    +\beta\sigma_{\max}^\frac{1}{2}(M)\sigma_{\max}^\frac{1}{2}(X\tp MX)\sigmamax{X\tp MX+I_p}\norm{X^\top MX-I_p}\\
    \le& \norm{L(X)}+7\beta\sigma_{\max}^\frac{1}{2}(M)\norm{X^\top MX-I_p}.
\end{aligned}\]
The proof is completed.
\end{proof}
The following results are direct corollaries of the aforementioned theorem, indicating that \ref{sogse} and \ref{cdfcp} share the same $\mathcal{O}(\varepsilon)$-first-order stationary points in $\Omega_{\frac{1}{6}}$, thus we omit the proof here.
\begin{corollary}
    Suppose Assumption \ref{as:blanket} holds, and $\beta\ge \tilde{\beta}$.  If  $X\in\Omega_{\frac{1}{6}}$ is an $\varepsilon$-first-order stationary point of \ref{cdfcp}, then it is a $\frac{4+2\sigma_{\max}^{\frac{1}{2}}(M)\beta}{\sigma_{\min}^{\frac{1}{2}}(M)\beta}\varepsilon$-first-order stationary point of \ref{sogse}. 
    
    Moreover, when $X\in\Omega_{\frac{1}{6}}$ is an  $\varepsilon$-first-orderstationary point of \ref{sogse}, then it is a $(1+7\beta\sigma_{\max}^{\frac{1}{2}}(M))\varepsilon$-first-order stationary point of \ref{cdfcp}.
\end{corollary}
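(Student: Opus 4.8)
The plan is to obtain both implications directly from the two-sided estimate in Theorem~\ref{th:star_bx}, once we notice that the matrix $L(X)=\nabla f(X)-MX\Psi(X\tp\nabla f(X))$ featured in that bound is exactly the gradient residual controlled in the definition of an $\varepsilon$-first-order stationary point of \ref{sogse}. Consequently, for $X\in\Omega_{\frac{1}{6}}$, the statement that $X$ is an $\varepsilon$-first-order stationary point of \ref{sogse} amounts to $\norm{L(X)}\le\varepsilon$ and $\norm{X\tp MX-I_p}\le\varepsilon$, while $X$ being an $\varepsilon$-first-order stationary point of \ref{cdfcp} means $\norm{\nabla h(X)}\le\varepsilon$.

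For the first implication, suppose $X\in\Omega_{\frac{1}{6}}$ satisfies $\norm{\nabla h(X)}\le\varepsilon$. The lower bound of Theorem~\ref{th:star_bx} reads $\frac{1}{2\kappa^{\frac{1}{2}}(M)}\norm{L(X)}+\frac{\sigma_{\min}^{\frac{1}{2}}(M)\beta}{4}\norm{X\tp MX-I_p}\le\varepsilon$; discarding one nonnegative summand at a time yields $\norm{L(X)}\le 2\kappa^{\frac{1}{2}}(M)\,\varepsilon$ and $\norm{X\tp MX-I_p}\le\frac{4}{\sigma_{\min}^{\frac{1}{2}}(M)\beta}\,\varepsilon$. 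It then suffices to note the identity $\frac{4+2\sigma_{\max}^{\frac{1}{2}}(M)\beta}{\sigma_{\min}^{\frac{1}{2}}(M)\beta}=\frac{4}{\sigma_{\min}^{\frac{1}{2}}(M)\beta}+2\kappa^{\frac{1}{2}}(M)$, so that both prefactors are bounded above by this common constant, whence $X$ is a $\frac{4+2\sigma_{\max}^{\frac{1}{2}}(M)\beta}{\sigma_{\min}^{\frac{1}{2}}(M)\beta}\varepsilon$-first-order stationary point of \ref{sogse}.

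For the second implication, suppose $X\in\Omega_{\frac{1}{6}}$ is an $\varepsilon$-first-order stationary point of \ref{sogse}, i.e. $\norm{L(X)}\le\varepsilon$ and $\norm{X\tp MX-I_p}\le\varepsilon$. Substituting these into the upper bound of Theorem~\ref{th:star_bx} gives $\norm{\nabla h(X)}\le\norm{L(X)}+7\beta\sigma_{\max}^{\frac{1}{2}}(M)\norm{X\tp MX-I_p}\le(1+7\beta\sigma_{\max}^{\frac{1}{2}}(M))\varepsilon$, which is the claimed bound. Since all the analytic effort is already carried by Theorem~\ref{th:star_bx}, there is no genuine obstacle here; the only point requiring a little care is the constant bookkeeping in the first implication, where the two separate bounds extracted from the lower estimate must each be absorbed into the single constant appearing in the statement (and one should keep in mind that the residual $L(X)$ is precisely the quantity measured in the definition of approximate stationarity of \ref{sogse}).
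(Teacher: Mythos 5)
Your proof is correct and follows exactly the route the paper intends: the authors explicitly omit the proof as a direct corollary of Theorem~\ref{th:star_bx}, and your argument (dropping one nonnegative term at a time from the lower bound, checking that the stated constant dominates both $2\kappa^{\frac{1}{2}}(M)$ and $\frac{4}{\sigma_{\min}^{\frac{1}{2}}(M)\beta}$, and substituting directly into the upper bound for the converse) is precisely that omitted derivation. The only point worth noting is that you correctly read the residual in the paper's definition of an $\varepsilon$-first-order stationary point of \ref{sogse} as $L(X)=\nabla f(X)-MX\Psi(X\tp\nabla f(X))$, despite the typo there.
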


\section{Stochastic optimization algorithm}\label{algorithm}

In this section, we develop two stochastic optimization algorithms for solving \ref{sogse} based on our proposed penalty function \ref{cdfcp} and establish their global convergence properties.
According to the formulation of $h$ in \ref{cdfcp}, the minimization of \ref{cdfcp} over $\Rnp$ is a nested stochastic optimization problem 
\cite{wang2017stochastic,chen2021solving,wang2017accelerating,ghadimi2020single,zhang2019stochastic}. As a result, the stochastic optimization approaches from these studies can be directly applied to solve \ref{sogse} by minimizing \ref{cdfcp}. However, this direct application requires tracking of $n$-by-$n$ matrices, resulting in high storage demands for large $n$. Towards this issue, our proposed algorithms are derived through an insightful combination of the tracking technique proposed in \cite{chen2021solving} with \ref{cdfcp}, which only need to track a $p$-by-$p$
matrix and thus enjoy lower memory cost compared to existing approaches.

\subsection{Stochastic gradient method}

In this subsection, we aim to develop a stochastic gradient method for solving \ref{sogse}. 

To begin with, we define the following auxiliary functions,
\begin{empheq}[left=\empheqlbrace]{align}
&C(X) := \mathbb{E}[C_{\theta}(X)],&&\text{where } C_{\theta}(X) := X^\top M_{\theta}\,X,\label{eq:c}\\
&H(U,V) := \mathbb{E}[H_{\xi}(U,V)],&&\text{where } H_{\xi}(U,V) := f_\xi\!\Bigl(
    U \bigl(\tfrac{3}{2}I_p - \tfrac{1}{2}V\bigr)
\Bigr)
+ \frac{\beta}{6}\,\operatorname{tr}\!\bigl(V(V^2 - I_p)\bigr).\label{eq:H}
\end{empheq}
Based on these auxiliary functions , we can reformulate the objective function $h$ in \ref{cdfcp} as
\begin{equation}\label{eq:nest}
    h(X)=H(X,C(X))=\bb{E}_{\xi}[H_{\xi} ( X,\bb{E}_{\theta}[C_{\theta}(X)]) ].
\end{equation}
Moreover, from the reformulation of $h$ in \eqref{eq:nest}, the gradient of $h(X)$ can be expressed as 
\[\begin{aligned}
    \nabla h(X)&=\nabla_U H(X,C(X))+\nabla C(X)\nabla_V H(X,C(X))\\     &= \mathbb{E}_{\xi,\theta} \left[\nabla_U H_{\xi}(X,\mathbb{E}_{\theta}[C_{\theta}(X)])+ \nabla C_{\theta}(X) \nabla_V H_{\xi}(X,\mathbb{E}_{\theta}[C_{\theta}(X)])\right].
 \end{aligned}\]
 Due to the compositional structure, constructing an unbiased estimator of $\nabla h(X)$ necessitates noiseless estimation of $C(X)$, which is intractable in practice. Towards this issue, inspired by \cite{chen2021solving}, we introduce a sequence of auxiliary variables $\{Y_k\}_{k\ge 0}$ to track the values of $\{C(X_k)\}_{k\ge 0}$, which can be formulated recursively by the following equation,
 \begin{equation}\label{eq:updatey}
    Y_{k+1}=Y_{k}-b_k(Y_k-C_{\theta_{k+1}}(X_k))+(C_{\theta_{k+1}}(X_{k+1})-C_{\theta_{k+1}}(X_k)),
\end{equation}
where $\theta_{k+1}$ is randomly and independently drawn from the distribution $\Theta$, and $b_k$ refers to the step size for tracking.

 In the following, we introduce an auxiliary function $W_{\xi,\theta}$ defined as follows, 
 \[W_{\xi,\theta}(X,Y):=\nabla_U H_{\xi}(X,Y)+\nabla C_{\theta}(X)\nabla_V H_{\xi}(X,Y),\]
 and let $W(X,Y):=\mathbb{E}[W_{\xi,\theta}(X,Y)]$ be the expectation of $W_{\xi,\theta}(X,Y)$ with respect to $\xi$ and $\theta$. 
 
 From the formulation of $W$, it is easy to verify that 
 \[\nabla h(X_{k})=W(X,C(X_{k}))=\mathbb{E}[W_{\xi,\theta}(X_{k},C(X_{k}))].\]
  Hence, we introduce the following sequence $\{D_k\}_{k\ge 0}$ to track
 $\{\nabla h(X_k)\}_{k\ge 0}$,
    \begin{equation}\label{eq:updated}
 D_{k}:=W_{\xi_{k},\theta_{k}}(X_{k},Y_{k}).
 \end{equation}
 Here $\xi_{k}$ and $\theta_{k}$ are randomly and independently drawn from the distributions $\Xi$ and $\Theta$, respectively. 
 
 The detailed algorithm is presented in Algorithm  \ref{cdfsg}.
 
\begin{algorithm}[H]\label{cdfsg}
\caption{Stochastic gradient method for solving \ref{sogse}.}
\KwData{Initialization $X_0,D_0\in\Rnp,Y_0\in\bb{R}^{p\times p}$, step size sequence $\{\alpha_k\}_{k\ge 0},\{b_k\}_{k\ge 0}$, initial update direction $D_0 = 0$.}

\For{$k=0,1,2,\dots,K-1$}
 {Update the main iterate $X_{k+1}$ by 
    \begin{equation}\label{eq:updatexoa}
        X_{k+1}=X_k-\alpha_kD_{k}.
    \end{equation}

Randomly  and independently draw $\theta_{k+1}$ from $\Theta$, update the tracking iterate $Y_{k+1}$ by \eqref{eq:updatey}.

   Randomly  and independently draw $\xi_{k+1}$ from $\Xi$, update the direction $D_{k+1}$ according to \eqref{eq:updated}.
 }
 \KwResult{$X_{K}$.}
\end{algorithm}

As a stochastic optimization method, Algorithm \ref{cdfsg} only utilizes a small batch of samples in each iteration.
Additionally, instead of storing an $n$-by-$n$ matrix to track the expectation of $M_{\theta}$, we
only needs to store a $p$-by-$p$ matrix to track the values of $\{C(X_k)\}_{k\ge 0}$ in Algorithm \ref{cdfsg}, which results in significantly lower storage costs.
In addition, Algorithm \ref{cdfsg} only requires matrix-matrix multiplications, hence avoiding the bottlenecks of factorizing matrices in existing Riemannian optimization approaches (see, e.g., \cite{absil_optimization_2008}). 
 We present a detailed summary of the storage requirements and computational costs for our algorithm in Table \ref{ta:computation}.

\begin{table}[htbp]
	\centering 
	\begin{tabular}{c|c|c}
 \hline\hline
 Key update& Computational cost& Memory cost\\
		\hline
		$X_{k+1}$ \eqref{eq:updatex}&$\mathcal{O}(np)$&$np$\\
  \hline
		$Y_{k+1}$ \eqref{eq:updatey}&$2O_C+\mathcal{O}(np)$&$p^2$\\
  \hline
		$D_{k+1} \eqref{eq:updated}$&$O_{\nabla f}+O_{\nabla C}+10np^2+p^3+\mathcal{O}(np)$&$np$\\
  \hline
		In total&$O_{\nabla f}+O_{\nabla C}+2O_C+10np^2+p^3+\mathcal{O}(np)$&$2np+p^2$\\
  \hline\hline
	\end{tabular}
\caption{Computational and memory complexity of key updates in Algorithm \ref{cdfsg}}
\label{ta:computation}
\end{table}
Here, $O_{\nabla f}$ represents the computational cost of $\nabla f_{\xi}(X)$. Similarly, $O_{C}$ and $O_{\nabla C}$ represent the computational costs of $C_{\theta}(X)$ and $\nabla C_{\theta}(X)$, respectively.

To establish the convergence properties of Algorithm \ref{cdfsg}, we first make the following assumptions on Algorithm \ref{cdfsg}. 
\begin{assumption}\label{as:algo}
\begin{enumerate}[(1)]
    \item The sampling oracle satisfies $i)$ $\mathbb{E}[M_{\theta_k}]=M$ and $ii)$ $\mathbb{E}[W_{\xi_k,\theta_k}(X,Y)]=W(X,Y)$. 
    \item The matrices $\{M_{\theta_k}\}_{k\ge 0}$ are uniformly bounded almost surely; i.e., there exists a constant $\tau_m>0$ such that $\sup_{k\geq 0} \|M_{\theta_k}\|\le \tau_m$ holds almost surely.
        \item The sequence $\{X_k\}_{k\ge 0}$ is bounded almost surely; i.e., there exists a constant $\tau_x>0$ such that $\sup_{k\geq 0} \norm{X_k} \leq \tau_x$  holds  almost surely.
          \item The sequence $\{D_k\}_{k\ge 0}$ is bounded almost surely; i.e., there exists a constant $\tau_d>0$ such that $\sup_{k\geq 0} \norm{D_k} \leq \tau_d$  holds  almost surely.
            \item The step sizes $\{\alpha_k\}_{k\ge 0}$ and $\{b_k\}_{k\ge 0}$ satisfy $a:=\sup_{k\ge 0}\frac{\alpha_k}{b_k}<\infty$.
\end{enumerate}
\end{assumption} 
\begin{remark}
     In Assumption \ref{as:algo} $(1)$, we adopt the standard unbiasedness assumption of the sampling oracle. Assumption \ref{as:algo} $(2)$ is analogous to the standard uniformly Lipschitz smoothness assumption applied to the constraint functions in nested stochastic optimization (see, e.g., \cite{wang2017stochastic, chen2021solving}). Assumption \ref{as:algo} $(3)$ and $(4)$, which suppose that the generated sequences $\{X_k\}_{k\ge 0}$ and $\{D_k\}_{k\ge 0}$ are uniformly bounded almost surely, are common in the context of stochastic optimization for nonconvex settings (see, e.g., \cite{benaim2005stochastic,le2024nonsmooth,bolte2023subgradient}).  Assumption \ref{as:algo} $(5)$ illustrates great flexibility in choosing the step sizes. 
\end{remark}
\begin{remark}
It follows from Assumption \ref{as:algo} $(2)$ that $\mathbb{E}[\|M_{\theta_k}-M\|^2]\le 2(\tau_m^2+\|M\|^2)$. In the other words, $M_{\theta_k}$ has bounded variance. For brevity, we denote $v_m^2:=\sup_{k\ge0} \mathbb{E}[\|M_{\theta_k}-M\|^2]$.
\end{remark}
The subsequent proposition asserts that under the above assumptions, the sequence $\{Y_k\}_{k\ge 0}$ generated by \eqref{eq:updatey} is almost surely bounded.

\begin{proposition}\label{pro:boundy}
   Suppose Assumptions \ref{as:blanket} and \ref{as:algo} hold, and let $\tau_y:=\tau_x\tau_m(\tau_x+2a\tau_d)$. Then $\sup_{k\geq 0} \norm{Y_k} \leq \tau_y$ holds almost surely. 
\end{proposition}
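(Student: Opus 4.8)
The plan is to unroll the recursion \eqref{eq:updatey} and bound the resulting telescoping-type sum. Writing $C_k := C_{\theta_{k+1}}(X_k)$ and $\tilde C_{k+1} := C_{\theta_{k+1}}(X_{k+1})$, the update reads $Y_{k+1} = (1-b_k)Y_k + b_k C_k + (\tilde C_{k+1} - C_k)$. First I would observe that each $C_{\theta}(X) = X^\top M_\theta X$ is controlled by Assumption \ref{as:algo}: $\norm{C_{\theta_{k+1}}(X_k)} \le \norm{M_{\theta_{k+1}}}\,\norm{X_k}^2 \le \tau_m \tau_x^2$ almost surely, using Assumption \ref{as:algo}(2)--(3). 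For the increment term, I would write $\tilde C_{k+1} - C_k = X_{k+1}^\top M_{\theta_{k+1}} X_{k+1} - X_k^\top M_{\theta_{k+1}} X_k$ and, using $X_{k+1} - X_k = -\alpha_k D_k$ from \eqref{eq:updatexoa}, expand this as $-\alpha_k(D_k^\top M_{\theta_{k+1}} X_k + X_k^\top M_{\theta_{k+1}} D_k) + \alpha_k^2 D_k^\top M_{\theta_{k+1}} D_k$, so that $\norm{\tilde C_{k+1} - C_k} \le 2\alpha_k \tau_m \tau_x \tau_d + \alpha_k^2 \tau_m \tau_d^2$ almost surely by Assumption \ref{as:algo}(2)--(4).

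Next I would set up the scalar recursion for $\norm{Y_k}$. Assuming $b_k \in (0,1]$ (which should be implicit or can be taken WLOG in the step-size regime; if not, one treats $|1-b_k|$), the triangle inequality gives
\[
    \norm{Y_{k+1}} \le (1-b_k)\norm{Y_k} + b_k \tau_m\tau_x^2 + 2\alpha_k \tau_m\tau_x\tau_d + \alpha_k^2 \tau_m\tau_d^2.
\]
Using Assumption \ref{as:algo}(5), $\alpha_k = (\alpha_k/b_k) b_k \le a\, b_k$, so $2\alpha_k \tau_m\tau_x\tau_d \le 2a b_k \tau_m \tau_x \tau_d$ and $\alpha_k^2 \tau_m \tau_d^2 \le a^2 b_k^2 \tau_m \tau_d^2 \le a^2 b_k \tau_m \tau_d^2$ (the last step if $b_k \le 1$; otherwise one absorbs the extra factor into the constant, possibly requiring a $\sup_k \alpha_k^2/b_k$ bound which also follows from Assumption \ref{as:algo}(5) together with boundedness of $\{b_k\}$). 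Hence
\[
    \norm{Y_{k+1}} \le (1-b_k)\norm{Y_k} + b_k\big(\tau_m\tau_x^2 + 2a\tau_m\tau_x\tau_d + a^2\tau_m\tau_d^2\big).
\]

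Then I would invoke the elementary fact that a recursion of the form $r_{k+1} \le (1-b_k) r_k + b_k \gamma$ with $b_k \in [0,1]$ yields $r_k \le \max\{r_0, \gamma\}$ for all $k$ by induction, so that $\sup_k \norm{Y_k} \le \max\{\norm{Y_0}, \gamma\}$ where $\gamma = \tau_m(\tau_x^2 + 2a\tau_x\tau_d + a^2\tau_d^2) = \tau_m(\tau_x + a\tau_d)^2$. To land exactly on the stated bound $\tau_y = \tau_x\tau_m(\tau_x + 2a\tau_d)$ I would first note $\tau_m(\tau_x+a\tau_d)^2 = \tau_m(\tau_x^2 + 2a\tau_x\tau_d + a^2\tau_d^2)$, which slightly exceeds $\tau_x\tau_m(\tau_x+2a\tau_d) = \tau_m(\tau_x^2 + 2a\tau_x\tau_d)$ by the term $a^2\tau_m\tau_d^2$; the discrepancy presumably comes from the paper using a sharper split of the increment term (e.g.\ bounding $\norm{\tilde C_{k+1}-C_k}$ by $\tau_m\tau_x \cdot \norm{X_{k+1}-X_k} + \tau_m\tau_x\norm{X_{k+1}-X_k}$-style factorizations, or discarding the $\alpha_k^2$ term under an additional smallness convention on step sizes), and/or from assuming $Y_0$ is initialized within the ball. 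I would reconcile this by using the factorization $\tilde C_{k+1} - C_k = X_{k+1}^\top M_{\theta_{k+1}}(X_{k+1}-X_k) + (X_{k+1}-X_k)^\top M_{\theta_{k+1}} X_k$, giving $\norm{\tilde C_{k+1}-C_k} \le \tau_m(\norm{X_{k+1}} + \norm{X_k})\norm{X_{k+1}-X_k} \le 2\tau_m\tau_x \alpha_k\tau_d \le 2a\tau_m\tau_x\tau_d b_k$, which avoids the $\alpha_k^2$ term entirely and yields exactly $\gamma = \tau_m\tau_x^2 + 2a\tau_m\tau_x\tau_d = \tau_x\tau_m(\tau_x + 2a\tau_d) = \tau_y$; combined with the (presumably standing) assumption $\norm{Y_0} \le \tau_y$, the induction then delivers $\sup_k\norm{Y_k} \le \tau_y$ almost surely.

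\textbf{Main obstacle.} The only genuinely delicate point is getting the increment term's bound to have the right shape, namely linear in $\alpha_k$ (hence absorbable into a $b_k$ times a constant via Assumption \ref{as:algo}(5)) rather than carrying a stray $\alpha_k^2$ piece, and ensuring $b_k \le 1$ so that the $(1-b_k)$ contraction is genuine; everything else is the standard ``$r_{k+1}\le(1-b_k)r_k + b_k\gamma \Rightarrow r_k \le \max\{r_0,\gamma\}$'' induction. One should also be careful that all inequalities hold on the same almost-sure event (the intersection of the events in Assumption \ref{as:algo}(2)--(4)), which is still almost sure.
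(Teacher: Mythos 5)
Your final argument coincides with the paper's proof: the same factorization $C_{\theta_{k+1}}(X_{k+1})-C_{\theta_{k+1}}(X_k)=X_{k+1}^\top M_{\theta_{k+1}}(X_{k+1}-X_k)+(X_{k+1}-X_k)^\top M_{\theta_{k+1}}X_k$ giving the bound $2\tau_x\tau_m\alpha_k\tau_d\le 2a\tau_m\tau_x\tau_d b_k$, followed by the induction $\norm{Y_{k+1}}\le(1-b_k)\norm{Y_k}+b_k\tau_y$ with $Y_0$ chosen so that $\norm{Y_0}\le\tau_y$ (the paper suggests $Y_0:=X_0^\top M_{\theta_0}X_0$). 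Your initial detour through the quadratic expansion with the stray $\alpha_k^2$ term is unnecessary, but you correctly self-correct to the paper's route, so the proposal is correct and essentially identical to the paper's proof.
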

\begin{proof}
From the update for $Y_{k+1}$ in \eqref{eq:updatey}, we have
\begin{equation}\label{eq:upy1}
     \|Y_{k+1}\|\le(1-b_k)\|Y_k\|+b_k\| C_{\theta_{k+1}}(X_k)\|+\|C_{\theta_{k+1}}(X_{k+1})-C_{\theta_{k+1}}(X_k)\|.
\end{equation}

It follows from the expression of $C_{\theta}(X)$ in \eqref{eq:c} that 
\begin{equation}\label{eq:upy2}
\begin{aligned}
        &\|C_{\theta_{k+1}}(X_{k+1})-C_{\theta_{k+1}}(X_k)\|\\
        \le{}
        &\|C_{\theta_{k+1}}(X_{k+1})-X_{k+1}^\top M_{\theta_{k+1}}X_{k}\|+\|X_{k+1}^\top M_{\theta_{k+1}}X_{k}-C_{\theta_{k+1}}(X_k)\|\\
        \le{}&\|M_{\theta_{k+1}}\|\|X_{k+1}-X_k\|(\|X_{k+1}\|+\|X_k\|)\le 2\tau_x\tau_m\|X_{k+1}-X_{k}\|.
\end{aligned}
\end{equation}

Combining \eqref{eq:upy1} with \eqref{eq:upy2}, we have
    \[\begin{aligned}
        \|Y_{k+1}\|{}&\le(1-b_k)\|Y_k\|+b_k\| C_{\theta_{k+1}}(X_k)\|+ 2\tau_x\tau_m\|X_{k+1}-X_{k}\|\\
        &\overset{(i)}{\le}(1-b_k)\|Y_k\|+b_k\|M_{\theta_{k+1}}\|\|X_k\|^2+ 2\alpha_k\tau_x\tau_m\|D_k\|\\
        &\le(1-b_k)\|Y_k\|+b_k\tau_x^2\tau_m+2\alpha_k\tau_x\tau_m\tau_d\le (1-b_k)\|Y_k\|+b_k\tau_y,
    \end{aligned}\]
    where the inequality $(i)$ directly follows from the update for $X_{k+1}$ in \eqref{eq:updatex}.
     Selecting $Y_0$ such that $\|Y_0\|\le \tau_y$, for instance, setting $Y_0:=X_0^\top M_{\theta_0}X_0$, then by induction, we have $\|Y_{k+1}\|\le \tau_y$. The proof is completed.
\end{proof}

In the rest of this section, we set $\mathcal{F}_k$ be the $\sigma$-algebra generated by $\{X_0,\dots,X_{k+1},Y_0,\dots,Y_k,\\\xi_0,\dots,\xi_{k},\theta_0,\dots,\theta_{k}\}$, and define the following two constants for our analysis,
\[\begin{aligned}
    L_h{}:=\sup_{\|X_1\|,\|X_2\|\le \tau_x}\frac{\nabla h(X_1)-\nabla h(X_2)}{\|X_1-X_2\|}, \;\;\;
    L_W{}:=\sup_{\|X\|\le \tau_x,\|Y_1\|,\|Y_2\|\le \tau_y} \frac{W(X,Y_1)-W(X,Y_2)}{\|Y_1-Y_2\|}.
\end{aligned}\]

The following proposition shows the performance of $\{Y_{k}\}_{k\ge 0}$ for tracking the inner function values $\{C(X_k)\}_{k\ge 0}$.
\begin{proposition}\label{pr:yktrack}
Suppose Assumptions \ref{as:blanket} and \ref{as:algo} hold, then sequence $\{Y_k\}_{k\ge 0}$ generated in Algorithm \ref{cdfsg} satisfies 
\[\mathbb{E}\left[\left.\|C(X_{k+1})-Y_{k+1}\|^2\right|\mathcal{F}_k\right]\le(1-b_k)^2\|C(X_{k})-Y_k\|^2+2b_k^2\tau_x^4v_m^2+4\tau_x^2\tau_m^2\|X_{k+1}-X_{k}\|^2.\]    
\end{proposition}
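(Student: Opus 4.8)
The plan is to turn the recursion \eqref{eq:updatey} for $Y_{k+1}$ into a contraction estimate for the tracking error $C(X_k)-Y_k$. First I would substitute \eqref{eq:updatey} into $Y_{k+1}-C_{\theta_{k+1}}(X_{k+1})$; the two copies of $C_{\theta_{k+1}}(X_{k+1})$ cancel, leaving the clean identity $Y_{k+1}-C_{\theta_{k+1}}(X_{k+1})=(1-b_k)\bigl(Y_k-C_{\theta_{k+1}}(X_k)\bigr)$. Subtracting this from $C(X_{k+1})$ and inserting $\pm(1-b_k)C(X_k)$ yields
\[
C(X_{k+1})-Y_{k+1}=(1-b_k)\bigl(C(X_k)-Y_k\bigr)+\Delta_k,\qquad
\Delta_k:=\bigl(C(X_{k+1})-C_{\theta_{k+1}}(X_{k+1})\bigr)-(1-b_k)\bigl(C(X_k)-C_{\theta_{k+1}}(X_k)\bigr).
\]
By the definition of $\mathcal{F}_k$, the matrices $X_k$, $X_{k+1}$ and $Y_k$ are $\mathcal{F}_k$-measurable while $\theta_{k+1}$ is drawn independently, so $\mathbb{E}[C_{\theta_{k+1}}(Z)\mid\mathcal{F}_k]=C(Z)$ for every $\mathcal{F}_k$-measurable $Z$ (Assumption \ref{as:algo}(1)); hence $\mathbb{E}[\Delta_k\mid\mathcal{F}_k]=0$. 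Since $(1-b_k)(C(X_k)-Y_k)$ is $\mathcal{F}_k$-measurable, the cross term vanishes and
\[
\mathbb{E}\bigl[\|C(X_{k+1})-Y_{k+1}\|^2\mid\mathcal{F}_k\bigr]=(1-b_k)^2\|C(X_k)-Y_k\|^2+\mathbb{E}\bigl[\|\Delta_k\|^2\mid\mathcal{F}_k\bigr].
\]

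The remaining work is to bound $\mathbb{E}[\|\Delta_k\|^2\mid\mathcal{F}_k]$. I would split $\Delta_k=b_k Q_k+E_k$ with $Q_k:=C(X_k)-C_{\theta_{k+1}}(X_k)=X_k^\top(M-M_{\theta_{k+1}})X_k$ and $E_k:=\bigl(C(X_{k+1})-C(X_k)\bigr)-\bigl(C_{\theta_{k+1}}(X_{k+1})-C_{\theta_{k+1}}(X_k)\bigr)$. Using $\|X_k\|\le\tau_x$ (Assumption \ref{as:algo}(3)) together with the bounded variance of $M_{\theta_k}$, one gets $\mathbb{E}[\|Q_k\|^2\mid\mathcal{F}_k]\le\tau_x^4\,\mathbb{E}[\|M-M_{\theta_{k+1}}\|^2\mid\mathcal{F}_k]\le\tau_x^4 v_m^2$. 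Moreover $E_k$ is exactly the centered version of the increment $C_{\theta_{k+1}}(X_{k+1})-C_{\theta_{k+1}}(X_k)$, so its conditional second moment is dominated by that of the increment, and the estimate \eqref{eq:upy2} gives $\mathbb{E}[\|E_k\|^2\mid\mathcal{F}_k]\le 4\tau_x^2\tau_m^2\|X_{k+1}-X_k\|^2$. Expanding $\mathbb{E}[\|\Delta_k\|^2\mid\mathcal{F}_k]=b_k^2\mathbb{E}[\|Q_k\|^2\mid\mathcal{F}_k]+2b_k\,\mathbb{E}[\langle Q_k,E_k\rangle\mid\mathcal{F}_k]+\mathbb{E}[\|E_k\|^2\mid\mathcal{F}_k]$ and controlling the cross term by Cauchy--Schwarz followed by Young's inequality (so that it is reabsorbed into the two squared terms) produces the claimed bound $2b_k^2\tau_x^4 v_m^2+4\tau_x^2\tau_m^2\|X_{k+1}-X_k\|^2$.

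The main obstacle I anticipate is precisely this cross term: $b_kQ_k$ and $E_k$ are both functions of the single sample $\theta_{k+1}$, so they are correlated and cannot be treated as independent. This coupling is in fact what delivers the variance reduction — reusing $\theta_{k+1}$ in $C_{\theta_{k+1}}(X_{k+1})$ and $C_{\theta_{k+1}}(X_k)$ forces $E_k$ to scale like $\|X_{k+1}-X_k\|$ rather than $O(1)$ — so it must be kept through the exact quadratic expansion and estimated carefully to retain the sharp $b_k^2$ and $\|X_{k+1}-X_k\|^2$ scalings; once the cross term is handled, the rest is routine algebra using the almost-sure bounds of Assumption \ref{as:algo} and the Lipschitz-type estimate \eqref{eq:upy2}.
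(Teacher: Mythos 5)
Your proposal is correct and follows essentially the same route as the paper's own proof: the identical decomposition of $C(X_{k+1})-Y_{k+1}$ into the contraction $(1-b_k)\bigl(C(X_k)-Y_k\bigr)$ plus a conditionally zero-mean remainder $b_kQ_k+E_k$ (the paper's $b_kI_{1,k}-I_{2,k}+I_{3,k}$), the same vanishing of the cross term with the $\mathcal{F}_k$-measurable part, the same use of \eqref{eq:upy2} to bound the increment, and the same Young-type absorption of the $\langle Q_k,E_k\rangle$ coupling. The only caveat is that this argument, as you describe it and as the paper executes it, actually delivers $8\tau_x^2\tau_m^2\|X_{k+1}-X_k\|^2$ for the last term rather than the stated $4\tau_x^2\tau_m^2\|X_{k+1}-X_k\|^2$ — a harmless factor-of-two slip shared with the paper that does not affect the downstream complexity results.
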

\begin{proof} 
From the update \eqref{eq:updatey}, we have
    \[\begin{aligned}
    C(X_{k+1})-Y_{k+1}={}&(1-b_k)(C(X_{k})-Y_k)+b_k\underbrace{(C(X_k)-C_{\theta_{k+1}}(X_k))}_{=:I_{1,k}}\\
    &-\underbrace{(C_{\theta_{k+1}}(X_{k+1})
    -C_{\theta_{k+1}}(X_k))}_{=:I_{2,k}}+\underbrace{(C(X_{k+1})-C(X_k))}_{=:I_{3,k}}.
\end{aligned}\] 
Therefore, taking the expectation of both sides of the above equation over $\theta_{k+1}$ conditioned on $\mathcal{F}_k$, we have
\[\begin{aligned}
    &\mathbb{E}\left[\left.\|C(X_{k+1})-Y_{k+1}\|^2\right|\mathcal{F}_k\right]\\
    ={}&(1-b_k)^2\|C(X_{k})-Y_k\|^2+\mathbb{E}\left[\left.\|b_kI_{1,k}-I_{2,k}+I_{3,k}\|^2\right|\mathcal{F}_k\right]\\
    &+2(1-b_k)\inner{C(X_{k})-Y_k,\mathbb{E}\left[\left.b_kI_{1,k}-I_{2,k}+I_{3,k}\right|\mathcal{F}_k\right]}\\
    \overset{(i)}{=}{}&(1-b_k)^2\|C(X_{k})-Y_k\|^2+b_k^2\mathbb{E}\left[\left.\|I_{1,k}\|^2\right|\mathcal{F}_k\right]+ \mathbb{E}\left[\left.\|I_{2,k}\|^2\right|\mathcal{F}_k\right]+\|I_{3,k}\|^2-2b_k\mathbb{E}\left[\left.\inner{I_{1,k},I_{2,k}}\right|\mathcal{F}_k\right]\\
&+2b_k\inner{\mathbb{E}\left[\left.I_{1,k}\right|\mathcal{F}_k\right],I_{3,k}}-2\inner{\mathbb{E}\left[\left.I_{1,k}\right|\mathcal{F}_k\right],I_{3,k}}\\
    \overset{(ii)}{=}{}&(1-b_k)^2\|C(X_{k})-Y_k\|^2+b_k^2\mathbb{E}\left[\left.\|I_{1,k}\|^2\right|\mathcal{F}_k\right]+ \mathbb{E}\left[\left.\|I_{2,k}\|^2\right|\mathcal{F}_k\right]-\|I_{3,k}\|^2-2b_k\mathbb{E}\left[\left.\inner{I_{1,k},I_{2,k}}\right|\mathcal{F}_k\right]\\
    \overset{(iii)}{\le}{}&(1-b_k)^2\|C(X_{k})-Y_k\|^2+2b_k^2 \mathbb{E}\left[\left.\|I_{1,k}\|^2\right|\mathcal{F}_k\right]+2\mathbb{E}\left[\left.\|I_{2,k}\|^2\right|\mathcal{F}_k\right]\\
   \le{}&(1-b_k)^2\|C(X_{k})-Y_k\|^2+2b_k^2\|X_k\|^2\mathbb{E}\left[\left.\|M-M_{\theta_{k+1}}\|^2\right|\mathcal{F}_k\right]+2\mathbb{E}\left[\left.\|I_{2,k}\|^2\right|\mathcal{F}_k\right]\\
    \overset{(iv)}{\le}{}&(1-b_k)^2\|C(X_{k})-Y_k\|^2+2b_k^2\tau_x^4v_m^2+4\tau_x^2\tau_m^2\|X_{k+1}-X_{k}\|^2,
\end{aligned}\]
where the equations $(i)$ and $(ii)$ use the facts that $\mathbb{E}[I_{1,k}|\mathcal{F}_k]=0$ and $\mathbb{E}[I_{2,k}|\mathcal{F}_k]=\mathbb{E}[I_{3,k}|\mathcal{F}_k]=0$, the inequality $(iii)$ follows from the Cauchy-Schwarz inequality
$-2b_k\inner{I_{2,k},I_{3,k}}\le b_k^2\|I_{1,k}\|^2+\|I_{2,k}\|^2$, and the inequality $(iv)$ leaverages the result in \eqref{eq:upy2}. The proof is completed. 
\end{proof}

Then we are ready to prove the results on the sample complexity of Algorithm \ref{cdfsg}.
\begin{theorem}
    Suppose Assumptions \ref{as:blanket} and \ref{as:algo} hold, and the step sizes are chosen as $\alpha_k=s_1b_k=\frac{s_2}{\sqrt{K}}$, where $2\ge s_1>0$ and $s_2>0$. Then the sequence $\{X_k\}_{k\ge 0}$ generated by Algorithm \ref{cdfsg} satisfies
    \[\frac{1}{K}\sum_{k=0}^{K-1}\mathbb{E}\left[\norm{\nabla h(X_{k})}^2\right]\le \frac{2}{\sqrt{K}}\left(s_2^{-1}J_0+\frac{s_2}{2s_1^2}L_W^2\tau_x^4v_m^2+\left(4L_W^2\tau_x^2\tau_m^2+\frac{L_h}{2}\right)s_2\tau_d^2\right),\]
              where  $J_0:=h(X_0)-h(X_*)+L_W^2\|C(X_{0})-Y_0\|^2$  and $X_*\in\arg\min_{X\in\mathbb{R}^{n\times p}} h(X)$.
\end{theorem}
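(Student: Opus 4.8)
The plan is to run a standard descent-lemma argument for the outer iterate while simultaneously carrying along the tracking error $\norm{C(X_k)-Y_k}^2$ as a Lyapunov term, exactly mirroring the structure suggested by the constant $J_0$. First I would invoke the $L_h$-smoothness of $h$ on the bounded set $\{\norm{X}\le\tau_x\}$ (valid by Assumption \ref{as:algo} (3)) to write
\[
h(X_{k+1})\le h(X_k)+\inner{\nabla h(X_k),X_{k+1}-X_k}+\frac{L_h}{2}\norm{X_{k+1}-X_k}^2
= h(X_k)-\alpha_k\inner{\nabla h(X_k),D_k}+\frac{L_h}{2}\alpha_k^2\norm{D_k}^2.
\]
Taking conditional expectation over $\mathcal{F}_k$ and using $\mathbb{E}[D_k\mid\mathcal{F}_k]=W(X_k,Y_k)$ (Assumption \ref{as:algo} (1)ii), the cross term becomes $-\alpha_k\inner{\nabla h(X_k),W(X_k,Y_k)}$. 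I would then add and subtract $\nabla h(X_k)=W(X_k,C(X_k))$ inside this inner product, so that
\[
-\alpha_k\inner{\nabla h(X_k),W(X_k,Y_k)} = -\alpha_k\norm{\nabla h(X_k)}^2 + \alpha_k\inner{\nabla h(X_k),W(X_k,C(X_k))-W(X_k,Y_k)},
\]
and bound the residual inner product via Cauchy--Schwarz and the Lipschitz constant $L_W$: $\inner{\nabla h(X_k),W(X_k,C(X_k))-W(X_k,Y_k)}\le \frac12\norm{\nabla h(X_k)}^2+\frac{L_W^2}{2}\norm{C(X_k)-Y_k}^2$. This yields a one-step inequality of the form $\mathbb{E}[h(X_{k+1})\mid\mathcal{F}_k]\le h(X_k)-\frac{\alpha_k}{2}\norm{\nabla h(X_k)}^2+\frac{\alpha_k L_W^2}{2}\norm{C(X_k)-Y_k}^2+\frac{L_h}{2}\alpha_k^2\tau_d^2$.

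Next I would combine this with the tracking recursion from Proposition \ref{pr:yktrack}, multiplied by the weight $L_W^2$ and using $(1-b_k)^2\le 1-b_k$ (valid since $b_k=\alpha_k/s_1\le \alpha_k/2\cdot\ldots$ — more precisely $b_k\in(0,1]$ for $K$ large, as $b_k=s_2/(s_1\sqrt K)$), together with $\norm{X_{k+1}-X_k}^2=\alpha_k^2\norm{D_k}^2\le\alpha_k^2\tau_d^2$. Defining the Lyapunov function $\Phi_k:=h(X_k)+L_W^2\norm{C(X_k)-Y_k}^2$, adding the two inequalities gives
\[
\mathbb{E}[\Phi_{k+1}\mid\mathcal{F}_k]\le \Phi_k-\frac{\alpha_k}{2}\norm{\nabla h(X_k)}^2 -\Bigl(b_k-\frac{\alpha_k}{2}\Bigr)L_W^2\norm{C(X_k)-Y_k}^2 + 2b_k^2 L_W^2\tau_x^4 v_m^2 + \Bigl(4L_W^2\tau_x^2\tau_m^2+\frac{L_h}{2}\Bigr)\alpha_k^2\tau_d^2.
\]
The key point is that the coefficient $b_k-\alpha_k/2 = \alpha_k(1/s_1-1/2)\ge 0$ since $s_1\le 2$; this is precisely why the hypothesis $s_1\le 2$ appears, and it lets us discard the nonpositive tracking-error term on the right. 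After substituting $\alpha_k=s_2/\sqrt K$ and $b_k=s_2/(s_1\sqrt K)$, telescoping from $k=0$ to $K-1$, taking total expectations, and using $\Phi_0 = h(X_0)+L_W^2\norm{C(X_0)-Y_0}^2$ and $\Phi_K\ge h(X_*)$, I would get $\frac{s_2}{2\sqrt K}\sum_{k=0}^{K-1}\mathbb{E}[\norm{\nabla h(X_k)}^2]\le J_0 + \frac{K}{2}\cdot 2\frac{s_2^2}{s_1^2 K}L_W^2\tau_x^4 v_m^2 + K\cdot\frac{s_2^2}{K}(4L_W^2\tau_x^2\tau_m^2+\frac{L_h}{2})\tau_d^2$. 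Dividing through by $\frac{s_2\sqrt K}{2}$ and rearranging reproduces the claimed bound.

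The main obstacle I anticipate is bookkeeping rather than conceptual: one must be careful that $L_h$ and $L_W$ are well-defined finite Lipschitz constants, which relies on the almost-sure boundedness of $\{X_k\}$ and $\{Y_k\}$ (the latter supplied by Proposition \ref{pro:boundy}), and one must verify that all the conditional-expectation manipulations are legitimate with respect to the filtration $\mathcal{F}_k$ — in particular that $X_{k+1}$ is $\mathcal{F}_k$-measurable so that $C(X_{k+1})$, $\norm{X_{k+1}-X_k}$, etc. may be treated as constants where needed, while $Y_{k+1}$ and $D_{k+1}$ are not. A secondary subtlety is ensuring $b_k\le 1$ so that $(1-b_k)^2\le 1-b_k$ holds; this is automatic for $K$ large enough given the prescribed step sizes, and for small $K$ the stated bound is vacuous or can be absorbed into constants. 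Everything else — the Cauchy--Schwarz splits, the telescoping, the final division — is routine.
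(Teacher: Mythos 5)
Your proposal is correct and follows essentially the same route as the paper: the same Lyapunov function $J_k=h(X_k)-h(X_*)+L_W^2\|C(X_k)-Y_k\|^2$, the same decomposition of the cross term via $W(X_k,C(X_k))-W(X_k,Y_k)$ with a Young-type split (the paper weights its Young inequality by $b_k$ rather than $1/2$, which is where $s_1\le 2$ enters for it, but the effect is the same), the same invocation of Proposition \ref{pr:yktrack}, and the same telescoping. The only slip is notational: the unbiasedness $\mathbb{E}[D_k\mid\cdot]=W(X_k,Y_k)$ holds conditionally on $\mathcal{F}_{k-1}$ (not $\mathcal{F}_k$, which already contains $\xi_k,\theta_k$), exactly as the paper conditions it, and this does not affect the argument.
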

\begin{proof}
    We first define the auxiliary variables $\{J_k\}$ as follows,
    \begin{equation}\label{eq:jkdefi}
        J_k:=h(X_k)-h(X_*)+L_W^2\|C(X_{k})-Y_k\|^2.
    \end{equation}
     Then by the update scheme in Algorithm \ref{cdfsg}, we have, 
    \begin{equation}\label{eq:divi}
        \begin{aligned}
    J_{k+1}-J_k{}&=h(X_{k+1})-h(X_k)+L_W^2\|C(X_{k+1})-Y_{k+1}\|^2-L_W^2\|C(X_{k})-Y_k\|^2\\
    &\le-\alpha_k\inner{\nabla h(X_{k}),D_{k}}+\frac{L_h}{2}\|X_{k+1}-X_k\|^2+L_W^2\|C(X_{k+1})-Y_{k+1}\|^2-L_W^2\|C(X_{k})-Y_k\|^2.
\end{aligned}
    \end{equation}
    Let $\bar{D}_{k}:=W_{\xi_k,\theta_k}(X_k,C(X_{k}))$. Taking the expectation over $\xi_{k}$ and $\theta_{k}$ of the term  $-\alpha_k\langle\nabla h(X_{k}),D_{k}\rangle$ conditioned on $\mathcal{F}_{k-1}$, we have
    \[\begin{aligned}
       \mathbb{E}\left[\left.-\alpha_k\inner{\nabla h(X_{k}),D_{k}}\right|\mathcal{F}_{k-1}\right]
       ={}&\mathbb{E}\left[\left.-\alpha_k\inner{\nabla h(X_{k}),\bar{D}_{k}}-\alpha_k\inner{\nabla h(X_{k}),D_{k}-\bar{D}_{k}}\right|\mathcal{F}_{k-1}\right]\\
       ={}&-\alpha_k\norm{\nabla h(X_{k})}^2-\alpha_k\inner{\nabla h(X_{k}),W(X_k,Y_k)-W(X_k,C(X_{k}))}\\
       \le{}&-\alpha_k\norm{\nabla h(X_{k})}^2+\alpha_kL_W\norm{\nabla h(X_{k})}\|Y_k-C(X_{k})\|\\
       \overset{(i)}{\le}{}&-\alpha_k\mybrace{1-\frac{\alpha_k}{4b_k}}\norm{\nabla h(X_{k})}^2+b_kL_W^2\|Y_k-C(X_{k})\|^2\\
       \le{}&-\alpha_k\mybrace{1-\frac{a}{4}}\norm{\nabla h(X_{k})}^2+b_kL_W^2\|Y_k-C(X_{k})\|^2,
    \end{aligned}\]
    where the inequality $(i)$ is due to Young's inequality $t_1t_2\le\frac{t_1^2}{4b_k}+b_kt_2^2$ with $t_1:=\alpha_k\|\nabla h(X_k)\|$ and $t_2:=L_W\|Y_k-C(X_k)\|$.
    
    Consequently, take expectation over $\xi_{k+1}$ and $\theta_{k+1}$ on both sides of \eqref{eq:divi} conditioned on $\mathcal{F}_k$, then we have
    \[\begin{aligned}
       \mathbb{E}\left [\left.J_{k+1}\right|\mathcal{F}_k\right]-J_k={}&\mathbb{E} \left[\left.\mathbb{E} \left[\left.-\alpha_k\inner{\nabla h(X_{k}),D_{k}}\right|\mathcal{F}_{k-1}\right]\right|\mathcal{F}_k\right]+\frac{L_h}{2}\|X_{k+1}-X_k\|^2\\
       &+L_W^2\mathbb{E} \left[\left.\|C(X_{k+1})-Y_{k+1}\|^2\right|\mathcal{F}_k\right]-L_W^2\|C(X_{k})-Y_k\|^2\\
       \le{}&-\alpha_k\mybrace{1-\frac{a}{4}}\norm{\nabla h(X_{k})}^2+b_kL_W^2\|Y_k-C(X_{k})\|^2+\frac{L_h}{2}\|X_{k+1}-X_k\|^2\\
        &+L_W^2\|C(X_{k+1})-Y_{k+1}\|^2-L_W^2\|C(X_{k})-Y_k\|^2\\
        \overset{(i)}{\le}{}& -\alpha_k\mybrace{1-\frac{a}{4}}\norm{\nabla h(X_{k})}^2+L_W^2((1-b_k)^2-(1-b_k))\|C(X_{k})-Y_k\|^2\\
        &+2b_k^2L_W^2\tau_x^4v_m^2+\alpha^2_k\left(4L_W^2\tau_x^2\tau_m^2+\frac{L_h}{2}\right)\tau_d^2\\
        \le{}& -\alpha_k\mybrace{1-\frac{a}{4}}\norm{\nabla h(X_{k})}^2+2b_k^2L_W^2\tau_x^4v_m^2+\alpha^2_k\left(4L_W^2\tau_x^2\tau_m^2+\frac{L_h}{2}\right)\tau_d^2,
    \end{aligned}\]
    where the inequality $(i)$ directly follows from Proposition \ref{pr:yktrack}.
    Rearranging the terms above and telescoping from $k=0,\dots,K-1$, then we have
    \[\begin{aligned}
         \sum_{k=0}^{K-1}\alpha_k\mybrace{1-\frac{a}{4}}\mathbb{E}\left[\norm{\nabla h(X_{k})}^2\right]{}&\le J_0-\mathbb{E}[J_K]+\sum_{k=0}^K2b_k^2L_W^2\tau_x^4v_m^2+\sum_{k=0}^K\alpha^2_k\left(4L_W^2\tau_x^2\tau_m^2+\frac{L_h}{2}\right)\tau_d^2\\
         &\le J_0+\sum_{k=0}^K2b_k^2L_W^2\tau_x^4v_m^2+\sum_{k=0}^K\alpha^2_k\left(4L_W^2\tau_x^2\tau_m^2+\frac{L_h}{2}\right)\tau_d^2.
    \end{aligned}\]
    Selecting $\alpha_k=s_1b_k=\frac{s_2}{\sqrt{K}}$ with $2\ge s_1>0$ and $s_2>0$, we can finally obtain 
    \[\frac{1}{K}\sum_{k=0}^{K-1}\mathbb{E}\left[\norm{\nabla h(X_{k})}^2\right]\le \frac{2}{\sqrt{K}}\left(s_2^{-1}J_0+\frac{s_2}{2s_1^2}L_W^2\tau_x^4v_m^2+\left(4L_W^2\tau_x^2\tau_m^2+\frac{L_h}{2}\right)s_2\tau_d^2\right).\]
    The proof is completed.
\end{proof}

\subsection{Stochastic gradient method with adaptive step sizes}

 In this subsection, we consider the acceleration of the stochastic gradient algorithm by imposing adaptive step sizes. Considering that
 Adam \cite{kingma2014adam} is a popular adaptive optimization strategy renowned for its effectiveness on a wide range of problems, we propose a variant of Algorithm \ref{cdfsg} following its framework, as illustrated in Algorithm \ref{acdfsg}.
 
\begin{algorithm}[htbp]\label{acdfsg}
\caption{Stochastic gradient method with adaptive step sizes for solving \ref{sogse}.}
\KwData{Initialization $X_0,B_0\in\Rnp,Y_0\in\bb{R}^{p\times p}$, step sizes $\{\alpha_k\}_{k\ge 0},\{b_k\}_{k\ge 0}$, parameters $\eta_1, \eta_2 \in [0,1),$ $ \epsilon>0$.}

\For{$k=0,1,2,\dots,K-1$}
 {Update the main iterate $X_{k+1}$ by
    \begin{equation}\label{eq:updatex}
        X_{k+1}=X_k-\alpha_k(\epsilon+\hat{V}_k)^{\odot(-\frac{1}{2})}\odot B_{k}.
    \end{equation}
    
Randomly  and independently draw $\theta_{k+1}$ from $\Theta$, update the tracking iterate $Y_{k+1}$ by \eqref{eq:updatey}.

   Randomly  and independently draw $\xi_{k+1}$ from $\Xi$, update the direction $D_{k+1}$ by \eqref{eq:updated}.

    Update the first-order moment estimate $B_{k+1}$ by
    \begin{equation}\label{eq:updateb}
        B_{k+1}=\eta_1 B_k+(1-\eta_1)D_{k+1}.
    \end{equation}

    Update the second-order moment estimate $V_{k+1}$ by
    \begin{equation}\label{eq:updatev}
        V_{k+1}=\eta_2 \hat{V}_k+(1-\eta_2)D_{k+1}^{\odot 2}.
    \end{equation}
    
    Update the scaling parameter $V_{k+1}$ by
    \begin{equation}\label{eq:updateav}
        \hat{V}_{k+1}=\max(V_{k+1},\hat{V}_k).
    \end{equation}
 }
 \KwResult{$X_K$.}
\end{algorithm}

To establish the convergence properties of Algorithm \ref{acdfsg}, we first make the following assumptions
based on Assumption \ref{as:blanket} and \ref{as:algo}.
\begin{assumption}\label{as:algoadam}
\begin{enumerate}[(1)]
    \item The function $\nabla f_{\xi}$ is uniformly bounded almost surely when $\|X\|\le \frac{\tau_x(\tau_y+3)}{2}$; i.e., 
    \[\sup_{\|X\|\le \frac{\tau_x(\tau_y+3)}{2}}\|\nabla f_{\xi} (X)\|\le \tau_f<\infty\]
    holds almost surely.
    \item The function $W_{\xi,\theta}$ is uniformly Lipschitz continuous with respect to $Y$ almost surely when $\|X\|\le \tau_x$ and $\|Y\|\le \tau_y$; i.e., there is a constant $\bar{L}_W>0$ such that
    \[\sup_{\|X\|\le \tau_x,\|Y_1\|,\|Y_2\|\le \tau_y}\frac{\|W_{\xi,\theta}(X,Y_1)-W_{\xi,\theta}(X,Y_2)\|}{\|Y_1-Y_2\|}\le \bar{L}_W\]
    holds almost surely.
\end{enumerate}
\end{assumption} 
\begin{remark}
     The conditions in Assumption \ref{as:algoadam} are weaker than the bounded stochastic gradient assumption commonly used in Adam's convergence analysis (e.g., \cite{kingma2014adam, chen2018convergence}), and the global Lipschitz continuity condition frequently imposed in studies of nested stochastic optimization problems (e.g., \cite{wang2017stochastic, chen2021solving}).
\end{remark}

The following proposition shows that the squared Euclidean distance $\|X_{k+1}-X_k\|^2$ between two consecutive elements of the sequence $\{X_k\}_{k\ge 0}$ produced by Algorithm \ref{acdfsg} exhibits the order of $\alpha_k^2$.

\begin{proposition}\label{pr:xkdist}
Suppose Assumptions \ref{as:blanket}, \ref{as:algo} and \ref{as:algoadam} hold, then the sequence $\{X_k\}_{k\ge 0}$ generated by Algorithm \ref{cdfsg} satisfies
    \[\begin{aligned}
   \|X_{k+1}-X_k\|^2\le\alpha_k^2np(1-\eta_2)^{-1}(1-\gamma)^{-1},
\end{aligned}\]
where $\eta_1<\eta_2^{\frac{1}{2}}<1$, and $\gamma=\frac{\eta_1^2}{\eta_2}$.
\end{proposition}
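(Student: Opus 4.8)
The plan is to bound the update direction in Algorithm~\ref{acdfsg} componentwise and then sum over the $np$ coordinates. Recall from \eqref{eq:updatex} that $X_{k+1} - X_k = -\alpha_k (\epsilon + \hat{V}_k)^{\odot(-1/2)} \odot B_k$, so that
\[
\|X_{k+1}-X_k\|^2 = \alpha_k^2 \sum_{i,j} \frac{B_k(i,j)^2}{\epsilon + \hat{V}_k(i,j)} \le \alpha_k^2 \sum_{i,j} \frac{B_k(i,j)^2}{\hat{V}_k(i,j)},
\]
since $\epsilon>0$. The crux is therefore a uniform bound on the scalar ratio $B_k(i,j)^2 / \hat{V}_k(i,j)$, and I expect this coordinatewise estimate to be the main obstacle. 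For each fixed coordinate $(i,j)$, write $d_\ell := D_\ell(i,j)$, so that $B_k(i,j) = (1-\eta_1)\sum_{\ell=1}^{k} \eta_1^{k-\ell} d_\ell$ (using $B_0$ initialized appropriately, or absorbing it into the first term), and $V_k(i,j) = (1-\eta_2)\sum_{\ell=1}^{k} \eta_2^{k-\ell} d_\ell^2$ unrolled through \eqref{eq:updatev}; since $\hat{V}_k(i,j) = \max(V_k(i,j),\hat{V}_{k-1}(i,j)) \ge V_k(i,j)$, it suffices to bound $B_k(i,j)^2/V_k(i,j)$.

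For that ratio I would apply the Cauchy--Schwarz inequality in the weighted form: writing $\eta_1^{k-\ell} = \eta_2^{(k-\ell)/2}\cdot(\eta_1^2/\eta_2)^{(k-\ell)/2} = \eta_2^{(k-\ell)/2}\gamma^{(k-\ell)/2}$ with $\gamma = \eta_1^2/\eta_2 < 1$ (valid since $\eta_1 < \eta_2^{1/2}$), we get
\[
B_k(i,j)^2 = (1-\eta_1)^2\Bigl(\sum_{\ell=1}^k \gamma^{(k-\ell)/2}\cdot \eta_2^{(k-\ell)/2} d_\ell\Bigr)^2 \le (1-\eta_1)^2 \Bigl(\sum_{\ell=1}^k \gamma^{k-\ell}\Bigr)\Bigl(\sum_{\ell=1}^k \eta_2^{k-\ell} d_\ell^2\Bigr).
\]
The first sum is at most $(1-\gamma)^{-1}$, and the second equals $(1-\eta_2)^{-1} V_k(i,j)$. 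Hence $B_k(i,j)^2/V_k(i,j) \le (1-\eta_1)^2(1-\gamma)^{-1}(1-\eta_2)^{-1} \le (1-\gamma)^{-1}(1-\eta_2)^{-1}$, using $(1-\eta_1)^2 \le 1$. Summing over the $np$ entries then gives exactly $\|X_{k+1}-X_k\|^2 \le \alpha_k^2\, np\,(1-\eta_2)^{-1}(1-\gamma)^{-1}$, as claimed.

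A few technical points to handle carefully: the case $V_k(i,j) = 0$ (equivalently all $d_\ell = 0$ up to step $k$) must be addressed, in which case $B_k(i,j)=0$ as well and $\epsilon>0$ keeps the ratio finite and in fact zero; the role of the initialization $B_0$ in \eqref{eq:updateb} and $\hat V_0$ should be reconciled with the unrolled expressions (one can either assume $B_0$ is consistent with $D_0$ or note that the extra $\eta_1^k B_0$ term only strengthens the need for a matching $\hat V_0$ floor). None of these require Assumptions~\ref{as:algo} or \ref{as:algoadam} — the estimate is purely algebraic and follows from the update rules and $\eta_1 < \eta_2^{1/2}$ — though those assumptions are what ultimately guarantee the ratios and iterates are well defined along the trajectory.
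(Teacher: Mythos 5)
Your proposal is correct and follows essentially the same route as the paper: an entrywise bound on $|B_k(i,j)|$ via the unrolled recursion, the weighted Cauchy--Schwarz step with the splitting $\eta_1^{k-\ell}=\gamma^{(k-\ell)/2}\eta_2^{(k-\ell)/2}$, the lower bound $\hat{V}_k(i,j)\ge(1-\eta_2)\sum_\ell\eta_2^{k-\ell}D_\ell(i,j)^2$ from the recursion for $V_k$ and the max operation, and a final sum over the $np$ coordinates. The only cosmetic differences are that you retain the $(1-\eta_1)$ factor before discarding it (the paper discards it immediately) and that your unrolled expression for $V_k(i,j)$ should be an inequality $\ge$ rather than an equality because \eqref{eq:updatev} recurses on $\hat{V}_k$, which is the direction you need anyway.
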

\begin{proof}
Selecting $\eta_1<1$ and setting $\gamma:=\frac{\eta_1^2}{\eta_2}$, for every $i\in\{1,\dots,n\}, j\in \{1,\dots,p\}$, we have
    \[\begin{aligned}
        |B_{k+1}(i,j)|{}&=|\eta_1B_k(i,j)+(1-\eta_1)D_k(i,j)|\le \eta_1|B_k(i,j)|+|D_k(i,j)|
        \le \sum_{l=0}^k\gamma^{\frac{k-l}{2}}\eta_2^{\frac{k-l}{2}}|D_l(i,j)|\\
        &\overset{(i)}{\le} \mybrace{\sum_{l=0}^k\gamma^{k-l}}^{\frac{1}{2}}\mybrace{\sum_{l=0}^k\gamma^{k-l}(D_l(i,j))^2}^{\frac{1}{2}}
        \overset{(ii)}{\le}(1-\gamma)^{-\frac{1}{2}}\mybrace{\sum_{l=0}^k\gamma^{k-l}(D_l(i,j))^2}^{\frac{1}{2}},
    \end{aligned}\]
    where the inequality $(i)$ follows from the Cauchy-Schwartz inequality, and the inequality $(ii)$ uses $\sum_{l=0}^{k}\gamma^{k-l}\le \sum_{l=0}^{k}\gamma^k\le(1-\gamma)^{-1}$.
    Since $\hat{V}_1(i,j)\ge(1-\eta_2)(D_1(i,j))^2$, and $\hat{V}_{k+1}(i,j)\ge\eta_2\hat{V}_k(i,j)+(1-\eta_2)(D_k(i,j))^2$, then by induction we have 
    \[\hat{V}_{k+1}(i,j)\ge(1-\eta_2)\mybrace{\sum_{l=0}^k\gamma^{k-l}(D_l(i,j))^2}.\]
    Consequently, we obtain,
    \[|B_{k+1}(i,j)|^2\le(1-\gamma)^{-1}\mybrace{\sum_{l=0}^k\gamma^{k-l}(D_l(i,j))^2}^{\frac{1}{2}}\le(1-\gamma)^{-1}(1-\eta_2)^{-1}\hat{V}_{k+1}(i,j).\]
    From the update \eqref{eq:updatex}, we have
    \[\|X_{k+1}-X_k\|^2=\alpha^2_k\sum_{i,j}(\epsilon+\hat{V}_{k+1}(i,j))^{-1}|B_{k+1}(i,j)|^2\le\alpha^2_k np(1-\eta_2)^{-1}(1-\gamma)^{-1}.\]
    The proof is completed.
\end{proof}

In the following proposition, we show that the sequences $\{B_k\}_{k\ge 0}$ and $\{\hat{V}_k\}_{k\ge 0}$ are all almost surely bounded.
\begin{proposition}\label{pro:bvbound}
    Suppose Assumptions \ref{as:blanket}, \ref{as:algo} and \ref{as:algoadam} hold, then the sequences $\{B_k\}_{k\ge 0}$ and $\{\hat{V}_k\}_{k\ge 0}$ satisfy \[ \quad\sup_{k\ge 0} \|B_k\|\le \tau_d;\quad \sup_{1\le i\le m,1 \le j \le n, k\ge 0}\hat{V}_k(i,j)\le \tau_d^2.\]
\end{proposition}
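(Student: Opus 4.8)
The plan is to bound $\{B_k\}$ and $\{\hat V_k\}$ by an induction that exploits their convex-combination update rules together with the almost-sure bound $\sup_{k}\|D_k\|\le\tau_d$ from Assumption~\ref{as:algo}~(4). The key observation is that both updates are averaging operations that cannot inflate the norm beyond the bound on the inputs.

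\emph{Step 1: Bounding $B_k$.} From the update \eqref{eq:updateb}, $B_{k+1}=\eta_1 B_k+(1-\eta_1)D_{k+1}$ with $\eta_1\in[0,1)$. Assuming inductively that $\|B_k\|\le\tau_d$ (the base case $B_0=0$ or $\|B_0\|\le\tau_d$ holds by the initialization), the triangle inequality gives $\|B_{k+1}\|\le\eta_1\|B_k\|+(1-\eta_1)\|D_{k+1}\|\le\eta_1\tau_d+(1-\eta_1)\tau_d=\tau_d$. Since $\|D_{k+1}\|\le\tau_d$ almost surely, this closes the induction and yields $\sup_{k\ge 0}\|B_k\|\le\tau_d$ almost surely.

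\emph{Step 2: Bounding $\hat V_k$ entrywise.} For any entry, $|D_{k+1}(i,j)|\le\|D_{k+1}\|\le\tau_d$, so $(D_{k+1}(i,j))^2\le\tau_d^2$. From \eqref{eq:updatev}, $V_{k+1}(i,j)=\eta_2\hat V_k(i,j)+(1-\eta_2)(D_{k+1}(i,j))^2$; assuming inductively $\hat V_k(i,j)\le\tau_d^2$, this gives $V_{k+1}(i,j)\le\eta_2\tau_d^2+(1-\eta_2)\tau_d^2=\tau_d^2$. Then by \eqref{eq:updateav}, $\hat V_{k+1}(i,j)=\max(V_{k+1}(i,j),\hat V_k(i,j))\le\tau_d^2$, closing the induction. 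The base case follows from the initialization ($\hat V_0=0$, or whatever the paper sets, which is $\le\tau_d^2$). Hence $\sup_{i,j,k}\hat V_k(i,j)\le\tau_d^2$ almost surely.

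\emph{Main obstacle.} There is essentially no analytic difficulty here; the only subtlety is the bookkeeping of the base case (making sure the stated initialization, e.g. $B_0$ with $\|B_0\|\le\tau_d$ and $\hat V_0=0$, is consistent with the induction) and being careful that the $\max$ in \eqref{eq:updateav} preserves rather than destroys the bound — it does, since both arguments are already bounded by $\tau_d^2$. The two inductions are independent and can be stated in one or two lines each; the proposition is genuinely a short consequence of Assumption~\ref{as:algo}~(4) and the structure of the Adam-style updates.
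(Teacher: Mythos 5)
Your proof is correct and follows essentially the same route as the paper: an induction exploiting the convex-combination structure of the updates \eqref{eq:updateb}--\eqref{eq:updateav} together with the almost-sure bound $\sup_k\|D_k\|\le\tau_d$ from Assumption \ref{as:algo} (4), with the observation that the entrywise $\max$ in \eqref{eq:updateav} preserves the bound. Your explicit attention to the base case is if anything slightly cleaner than the paper's (whose suggested initialization $B_0:=X_0^\top M_{\theta_0}X_0$ appears to be a dimensional slip carried over from the $Y_0$ initialization).
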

\begin{proof}

According to the update \eqref{eq:updateb}, we have
   \[\|B_{k+1}\|\le\eta_1\|B_k\|+(1-\eta_1)\|D_k\|\le\eta_1\|B_k\|+(1-\eta_1)\tau_d.\]
Selecting $B_0$ such that $\|B_0\|\le \tau_d$, for instance, setting $B_0:=X_0^\top M_{\theta_0}X_0$, then by induction, we have $\|B_{k+1}\|\le \tau_d$. Similarly, from the update \eqref{eq:updatev}, we have
\[\begin{aligned}
    \hat{V}_{k+1}(i,j){}&\le\max\left\{\eta_2\hat{V}_k(i,j)+(1-\eta_2)(D_k(i,j))^2\right\}\\
    &\le\max\left\{\eta_2\hat{V}_k(i,j)+(1-\eta_2)\tau_d^2\right\}.
\end{aligned}\]
Since $V_1(i,j)\le \hat{V}_1(i,j)\le \tau_d^2$, by induction, we have $\hat{V}_{k+1}(i,j)\le \tau_d^2$.
\end{proof}

Then we are ready to prove the sample complexity results of Algorithm \ref{acdfsg}.

\begin{theorem} 
Suppose Assumptions \ref{as:blanket}, \ref{as:algo} and \ref{as:algoadam} hold, and the step sizes are chosen as $\alpha_k=s_1b_k=\frac{s_2}{\sqrt{K}}$, where $2\ge s_1>0$ and $s_2>0$. Then the iterates $\{X_k\}_{k\ge 0}$ generated by Algorithm \ref{acdfsg} satisfy 
\[\begin{aligned}
    \frac{1}{K}\sum_{k=1}^{K}\mathbb{E}\left[\left\|\nabla h(X_k)\right\|^2\right]\le{}& \frac{1}{\sqrt{K}}\frac{2(\epsilon+\tau_d^2)^{\frac{1}{2}}}{1-\eta_1}\left(s_2^{-1}J_0+\frac{(\tau_h\tau_d)\epsilon^{-\frac{1}{2}}(np+1)}{s_2(1-\eta_1)\sqrt{K}}+8cs_2\tau_x^2\tau_m^2np(1-\eta_1)\tilde{\eta}\right.\\
    &\left.+4s_1^{-2}s_2c\tau_x^4v_m^2+\frac{3-\eta_1}{2}L_hs_2\tilde{\eta}np\right),
\end{aligned}\]
 where  $J_0:=h(X_0)-h(X_*)+c\|C(X_{0})-Y_0\|^2$, $c:=(1-\eta_1)^{-1}\epsilon^{-\frac{1}{2}}\bar{L}_W^2$,  and $X_*\in\arg\min_{X\in\mathbb{R}^{n\times p}} h(X)$.
\end{theorem}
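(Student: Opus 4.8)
The plan is to run the same Lyapunov argument as in the proof of the non-adaptive theorem, but with the potential
\[
  J_k := h(X_k) - h(X_*) + c\,\|C(X_k) - Y_k\|^2 \qquad c := (1-\eta_1)^{-1}\epsilon^{-\frac12}\bar L_W^2
\]
and with two extra layers of bookkeeping forced by the momentum buffer $B_k$ and the adaptive preconditioner $\xi_k := (\epsilon+\hat V_k)^{\odot(-\frac12)}$. Throughout I would use that the iterates stay in the compact set $\{\|X\|\le\tau_x\}$ (Assumption \ref{as:algo}(3)), so on that set $\nabla h$ is $L_h$-Lipschitz and bounded by $\tau_h := \sup_{\|X\|\le\tau_x}\|\nabla h(X)\|<\infty$; the entrywise sandwich $\epsilon^{-\frac12}\ge(\epsilon+\hat V_k(i,j))^{-\frac12}\ge(\epsilon+\tau_d^2)^{-\frac12}$ together with the monotonicity $\hat V_{k+1}\ge\hat V_k$ entrywise, both from Proposition \ref{pro:bvbound} and \eqref{eq:updateav}; the one-step displacement bound $\|X_{k+1}-X_k\|^2\le\alpha_k^2\,np\,(1-\eta_2)^{-1}(1-\gamma)^{-1}$ from Proposition \ref{pr:xkdist}; and the tracking recursion of Proposition \ref{pr:yktrack}.

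First I would apply $L_h$-smoothness along the update, $h(X_{k+1})\le h(X_k)-\alpha_k\langle\nabla h(X_k),\xi_k\odot B_k\rangle+\tfrac{L_h}{2}\|X_{k+1}-X_k\|^2$, and then process the momentum term by substituting $B_k=\eta_1 B_{k-1}+(1-\eta_1)D_k$ and rewriting $\xi_k\odot B_{k-1}$ via $X_{k-1}-X_k=\alpha_{k-1}\,\xi_{k-1}\odot B_{k-1}$. Splitting the resulting cross terms with Young's inequality, using $\xi_k\le\xi_{k-1}$ entrywise, and converting $\|\nabla h(X_k)-\nabla h(X_{k-1})\|$ into $\|X_k-X_{k-1}\|$ by $L_h$-Lipschitzness, one isolates a genuinely negative ``gain'' term $-\alpha_k(1-\eta_1)\langle\nabla h(X_k),\xi_k\odot D_k\rangle$ together with errors that are either $O(\alpha_k^2)$ — controlled by Proposition \ref{pr:xkdist} — or telescoping in the preconditioner, the latter contributing $\bigl(\sup_k\alpha_k\bigr)\,\tau_h\tau_d\sum_{i,j}\bigl(\xi_0(i,j)-\xi_K(i,j)\bigr)\le\bigl(\sup_k\alpha_k\bigr)\,\tau_h\tau_d\,np\,\epsilon^{-\frac12}$, which is exactly the origin of the dimension-dependent $O(1/K)$ correction $(\tau_h\tau_d)\epsilon^{-\frac12}(np+1)$ in the stated bound.

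Next I would take the conditional expectation of the gain term given $\mathcal F_{k-1}$. By oracle unbiasedness (Assumption \ref{as:algo}(1)) it splits into $\langle\nabla h(X_k),\xi_k\odot\nabla h(X_k)\rangle$, which is $\ge(\epsilon+\tau_d^2)^{-\frac12}\|\nabla h(X_k)\|^2$ by the entrywise lower bound, plus a bias $\langle\nabla h(X_k),\xi_k\odot(W(X_k,Y_k)-W(X_k,C(X_k)))\rangle$ whose absolute value is at most $\epsilon^{-\frac12}\bar L_W\|\nabla h(X_k)\|\,\|Y_k-C(X_k)\|$ by Assumption \ref{as:algoadam}(2); a Young step against $\|Y_k-C(X_k)\|^2$ then renders the bias absorbable once we add the tracking recursion of Proposition \ref{pr:yktrack} and use that $(1-b_k)^2\le 1-b_k$ for $b_k\le 1$, the constant $c$ being chosen precisely so those $\|Y_k-C(X_k)\|^2$ contributions cancel. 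Collecting everything, rearranging so the left side is $\sum_{k=1}^{K}\alpha_k(1-\eta_1)(\epsilon+\tau_d^2)^{-\frac12}\,\mathbb E[\|\nabla h(X_k)\|^2]$, telescoping $\sum_k(J_{k}-J_{k+1})$ with $J_K\ge 0$, and finally inserting $\alpha_k=s_1 b_k=s_2/\sqrt K$ (using $2\ge s_1>0$) yields the claimed $\mathcal O(1/\sqrt K)$ rate with the indicated constants.

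The hard part will be the interaction between the momentum and the adaptive preconditioner: $B_k$ points along a \emph{biased} direction (its conditional mean is $W(X_k,Y_k)$ rather than $\nabla h(X_k)$) and it is scaled by the random, non-$\mathcal F_{k-1}$-measurable $\xi_k$, so keeping the coefficient of $\|\nabla h(X_k)\|^2$ strictly positive after all the Young splits — rather than having it eaten by the $\eta_1 B_{k-1}$ part — is what forces the use of the AMSGrad monotonicity and of the deterministic sandwich on $\xi_k$, and is the source of the $1/(1-\eta_1)$ factors and the $np$ in the lower-order term. Once that is arranged, verifying that the $O(\alpha_k^2)$ residuals are negligible after dividing by $\sum_k\alpha_k\sim\sqrt K$ is routine but calculation-heavy bookkeeping.
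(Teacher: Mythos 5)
Your plan is correct in substance and uses the same ingredients as the paper's proof (the descent lemma for $h$, the decomposition of $\langle\nabla h(X_k),(\epsilon+\hat V_k)^{\odot(-\frac12)}\odot B_k\rangle$ into a fresh-gradient part, a momentum part, and a preconditioner-difference part, the entrywise sandwich $(\epsilon+\tau_d^2)^{-\frac12}\le(\epsilon+\hat V_k(i,j))^{-\frac12}\le\epsilon^{-\frac12}$, the telescoping of $\sum_{i,j}\bigl((\epsilon+\hat V_{k-1}(i,j))^{-\frac12}-(\epsilon+\hat V_k(i,j))^{-\frac12}\bigr)$, Propositions \ref{pr:yktrack} and \ref{pr:xkdist}, and the choice of $c$ to cancel the tracking error). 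The one structural difference is how the momentum recursion is closed: the paper augments the Lyapunov function with the weighted cross term $-c_k\langle\nabla h(X_k),(\epsilon+\hat V_k)^{\odot(-\frac12)}\odot B_k\rangle$ with $c_k=\sum_{j\ge k}\eta_1^{j-k}\alpha_j$, so that the identity $\eta_1c_{k+1}=c_k-\alpha_k$ makes the recursive copy of the descent term cancel exactly between $J_k$ and $J_{k+1}$, whereas you keep the plain potential $h(X_k)-h(X_*)+c\|C(X_k)-Y_k\|^2$ and propose to unroll $B_k=\eta_1B_{k-1}+(1-\eta_1)D_k$ by hand. The two devices are mathematically equivalent (both resum the same geometric series and produce the $(1-\eta_1)^{-1}$ factors), but the paper's version localizes the cancellation to a single telescoping step, which is why its per-iteration inequality stays clean.

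The one place where your write-up is under-specified, and which you should repair before the argument is complete, is the claim that after substituting $B_k=\eta_1B_{k-1}+(1-\eta_1)D_k$ the residual errors are ``either $O(\alpha_k^2)$ or telescoping in the preconditioner.'' The term $-\alpha_k\eta_1\langle\nabla h(X_{k-1}),(\epsilon+\hat V_{k-1})^{\odot(-\frac12)}\odot B_{k-1}\rangle$ is neither: it is an $O(\alpha_k)$ copy of the full descent inner product from the previous iteration (conditional mean $W(X_{k-1},Y_{k-1})$, not zero), so bounding it crudely by Young's inequality would leave an $O(\alpha_k)$ error that does not vanish after dividing by $\sum_k\alpha_k$. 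You must feed it back into the recursion — i.e., sum over $k$, recognize $\sum_k\alpha_k\eta_1\langle\nabla h(X_{k-1}),\cdot\rangle$ as $\eta_1$ times the total you are bounding (up to boundary terms and the Lipschitz correction $\eta_1L_h\alpha_{k-1}^{-1}\|X_k-X_{k-1}\|^2$, which Proposition \ref{pr:xkdist} turns into $O(\alpha_{k-1})\cdot np\tilde\eta$ per step), and solve the resulting inequality for the total. Your closing remark about the $\eta_1B_{k-1}$ part ``eating'' the coefficient and producing the $1/(1-\eta_1)$ factors shows you see this; just make the resummation explicit, since it is exactly the step the paper's $c_k$-weighted potential is designed to perform.
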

\begin{proof}
For our analysis, we define the following random variable 
    \begin{equation}\label{eq:jkdefi_1}
        J_k:=h(X_k)-h(X_*)-c_k\inner{\nabla h(X_{k}),(\epsilon+\hat{V}_{k})^{\odot(-\frac{1}{2})}\odot B_{k}}+c\|C(X_{k})-Y_k\|^2,
    \end{equation}
    where $c_k:=\sum_{j=k}^{\infty}\eta_1^{j-k}\alpha_j$, and $c:=(1-\eta_1)^{-1}\epsilon^{-\frac{1}{2}}\bar{L}_W^2$.  By Taylor's expansion, we have, 
    \begin{equation}\label{eq:divij}
        \begin{aligned}
    J_{k+1}-J_k={}&h(X_{k+1})-h(X_k)-c_{k+1}\inner{\nabla h(X_{k+1}),(\epsilon+\hat{V}_{k+1})^{\odot(-\frac{1}{2})}\odot B_{k+1}}\\
    &+c\|C(X_{k+1})-Y_{k+1}\|^2-c_k\inner{\nabla h(X_{k}),(\epsilon+\hat{V}_{k})^{\odot(-\frac{1}{2})}\odot B_{k}}-c\|C(X_{k})-Y_k\|^2\\
    \le{}&-c_{k+1}\inner{\nabla h(X_{k+1}),(\epsilon+\hat{V}_{k+1})^{\odot(-\frac{1}{2})}\odot B_{k+1}}+\frac{L_h}{2}\|X_{k+1}-X_k\|^2\\
    &+c\|C(X_{k+1})-Y_{k+1}\|^2-(\alpha_k-c_k)\inner{\nabla h(X_{k}),(\epsilon+\hat{V}_{k})^{\odot(-\frac{1}{2})}\odot B_{k}}-c\|C(X_{k})-Y_k\|^2.
\end{aligned}
    \end{equation}
    We can decompose the inner product term $-\inner{\nabla h(X_{k}),(\epsilon+\hat{V}_{k})^{\odot(-\frac{1}{2})}\odot B_{k}}$ as follows,
    \begin{equation}\label{eq:decom}
        \begin{aligned}
        &-\inner{\nabla h(X_k), (\epsilon+\hat{V}_k)^{\odot(-\frac{1}{2})}\odot B_{k}}\\
        ={}&\underbrace{-(1-\eta_1)\inner{\nabla h(X_k), (\epsilon+\hat{V}_{k-1})^{\odot(-\frac{1}{2})}\odot D_{k}}}_{=:I_{1,k}}\underbrace{-\eta_1\inner{\nabla h(X_k), (\epsilon+\hat{V}_{k-1})^{\odot(-\frac{1}{2})}\odot B_{k-1}}}_{=:I_{2,k}}\\
        &\underbrace{-\inner{\nabla h(X_k), \left((\epsilon+\hat{V}_{k})^{\odot(-\frac{1}{2})}-(\epsilon+\hat{V}_{k-1})^{\odot(-\frac{1}{2})}\right)\odot B_{k}}}_{=:I_{3,k}}.
    \end{aligned}
    \end{equation}
By defining $\bar{D}_{k}:=W_{\xi_k,\theta_k}(X_k,C(X_{k}))$, we have
    \[I_{1,k}=-(1-\eta_1)\inner{\nabla h(X_k), (\epsilon+\hat{V}_{k-1})^{\odot(-\frac{1}{2})}\odot \bar{D}_{k}}+(1-\eta_1)\inner{\nabla h(X_k), (\epsilon+\hat{V}_{k-1})^{\odot(-\frac{1}{2})}\odot (\bar{D}_{k}-D_{k})}. \]
Taking expectation over $\xi_{k}$ and $\theta_{k}$ of $I_{1,k}$ conditioned on $\mathcal{F}_{k-1}$, we have 
\begin{equation}\label{eq:i1}
    \begin{aligned}
        &\mathbb{E}[I_{1,k}|\mathcal{F}_{k-1}]\\
    \le{}&-(1-\eta_1)\inner{\nabla h(X_k), (\epsilon+\hat{V}_{k-1})^{\odot(-\frac{1}{2})}\odot \mathbb{E}[\bar{D}_{k}|\mathcal{F}_{k-1}]}\\
    &+(1-\eta_1)\mathbb{E}\left[\left.\left\|(\epsilon+\hat{V}_{k-1})^{\odot(-\frac{1}{4})}\odot \nabla h(X_k)\right\|\left\|(\epsilon+\hat{V}_{k-1})^{\odot(-\frac{1}{4})}\odot(\bar{D}_{k}-D_{k})\right\|\right|\mathcal{F}_{k-1}\right]\\
    ={}&-(1-\eta_1)\left\|(\epsilon+\hat{V}_{k-1})^{\odot(-\frac{1}{4})}\odot\nabla h(X_k)\right\|^2\\
    &+(1-\eta_1)\left\|(\epsilon+\hat{V}_{k-1})^{\odot(-\frac{1}{4})}\odot \nabla h(X_k)\right\|\mathbb{E}\left[\left.\left\|(\epsilon+\hat{V}_{k-1})^{\odot(-\frac{1}{4})}\odot(\bar{D}_{k}-D_{k})\right\|\right|\mathcal{F}_{k-1}\right]\\
    \overset{(i)}{\le}{}&-(1-\eta_1)\left(1-\frac{a}{4}\right)\left\|(\epsilon+\hat{V}_{k-1})^{\odot(-\frac{1}{4})}\odot\nabla h(X_k)\right\|^2\\
    &+(1-\eta_1)a^{-1}\mathbb{E}\left[\left.\left\|(\epsilon+\hat{V}_{k-1})^{\odot(-\frac{1}{4})}\odot(\bar{D}_{k}-D_{k})\right\|^2\right|\mathcal{F}_{k-1}\right]\\
    \overset{(ii)}{\le}{}&-(1-\eta_1)\left(1-\frac{a}{4}\right)(\epsilon+\tau_d^2)^{-\frac{1}{2}}\|\nabla h(X_k)\|^2+a^{-1}\epsilon^{-\frac{1}{2}}\bar{L}_W^2\mathbb{E}[\|C(X_{k})-Y_{k}\|^2|\mathcal{F}_{k-1}],
\end{aligned}
\end{equation}
where the inequality $(i)$ is due to Young's inequality $t_1t_2\le\frac{a}{4}t_1^2+a^{-1}t_2^2$ with $t_1:=\|(\epsilon+\hat{V}_{k-1})^{\odot(-\frac{1}{4})}\odot\nabla h(X_k)\|$ and $t_2:=(1-\eta_1)^{\frac{1}{2}}\|(\epsilon+\hat{V}_{k-1})^{\odot(-\frac{1}{4})}\odot(\bar{D}_{k}-D_{k})\|$, and the inequality $(ii)$ follows from the entry-wise bound $(\epsilon+\tau_d^2)^{-\frac{1}{2}}\le(\epsilon+\hat{V}_{k-1}(i,j))^{-\frac{1}{2}}\le \epsilon^{-\frac{1}{2}}$ and the fact that $\|\bar{D}_{k}-D_{k}\|^2\le \bar{L}_W^2\|C(X_{k})-Y_{k}\|^2$. 

Similarly, for $I_{2,k}$, we have 
\begin{equation}\label{eq:i2}
    \begin{aligned}
    \mathbb{E}[I_{2,k}|\mathcal{F}_{k-1}]={}&-\eta_1\inner{\nabla h(X_{k-1}),(\epsilon+\hat{V}_{k-1})^{\odot(-\frac{1}{2})}\odot B_{k-1}}\\
    &-\eta_1\inner{\nabla h(X_k)-\nabla h(X_{k-1}),(\epsilon+\hat{V}_{k-1})^{\odot(-\frac{1}{2})}\odot B_{k-1}}\\
    \le{}&-\eta_1\inner{\nabla h(X_{k-1}),(\epsilon+\hat{V}_{k-1})^{\odot(-\frac{1}{2})}\odot B_{k-1}}+\eta_1L_h\alpha_{k-1}^{-1}\|X_k-X_{k-1}\|^2\\
    \overset{(i)}{\le}{}& \eta_1(I_{1,k-1}+I_{2,k-1}+I_{3,k-1})+\eta_1L_h\alpha_{k-1}np(1-\eta_2)^{-1}(1-\gamma)^{-1},
\end{aligned}
\end{equation}
where the inequality $(i)$ follows from Proposition \ref{pr:xkdist} and the decomposition \eqref{eq:decom}.

Obviously, we have $|\nabla_{i,j} h(X_k)|\le \|\nabla h(X_k)\|$ and  $|B_{k}(i,j)|\le \|B_{k}\|$. Since $\hat{V}_{k}(i,j)=\max\{\cdot,\hat{V}_{k-1}(i,j)\}\ge\hat{V}_{k-1}(i,j)$, it holds that $(\epsilon+\hat{V}_{k-1}(i,j))^{-\frac{1}{2}}\ge(\epsilon+\hat{V}_{k}(i,j))^{-\frac{1}{2}}$, then for $I_{3,k}$, we obtain,

\begin{equation}\label{eq:i3}
    \begin{aligned}
    \mathbb{E}[I_{3,k}|\mathcal{F}_{k-1}]{}&=-\sum_{i,j}\nabla_{i,j} h(X_k)\left((\epsilon+\hat{V}_{k}(i,j))^{-\frac{1}{2}}-(\epsilon+\hat{V}_{k-1}(i,j))^{-\frac{1}{2}}\right)B_{k}(i,j)\\
    &\le\|\nabla h(X_k)\|\|B_{k}\|\sum_{i,j}\left((\epsilon+\hat{V}_{k-1}(i,j))^{-\frac{1}{2}}-(\epsilon+\hat{V}_{k}(i,j))^{-\frac{1}{2}}\right)\\
    &\le \tau_h \tau_d\sum_{i,j}\left((\epsilon+\hat{V}_{k-1}(i,j))^{-\frac{1}{2}}-(\epsilon+\hat{V}_{k}(i,j))^{-\frac{1}{2}}\right).
\end{aligned}
\end{equation}

 Taking expectation over $\xi_{k+1}$ and $\theta_{k+1}$ on both sides of \eqref{eq:divij} conditioned on $\mathcal{F}_k$, we have
\[\begin{aligned}
    \mathbb{E}[J_{k+1}|\mathcal{F}_k]-J_k\le{}& c_{k+1}\mathbb{E}[I_{1,k+1}+I_{2,k+1}+I_{3,k+1}|\mathcal{F}_k]+\frac{L_h}{2}\mathbb{E}[\|X_{k+1}-X_k\|^2|\mathcal{F}_k]\\
    &-(c_k-\alpha_k)(I_{1,k}+I_{2,k}+I_{3,k})+c\mathbb{E}[\|C(X_{k+1})-Y_{k+1}\|^2|\mathcal{F}_k]-c\|C(X_{k})-Y_k\|\\
    \overset{(i)}{\le}{}&-c_{k+1}(1-\eta_1)\left(1-\frac{a}{4}\right)(\epsilon+\tau_d^2)^{-\frac{1}{2}}\|\nabla h(X_{k+1})\|^2\\
    &+(\eta_1c_{k+1}-(c_k-\alpha_k))(I_{1,k}+I_{2,k}+I_{3,k})
    +c_{k+1}\eta_1L_h\alpha_{k}np(1-\eta_2)^{-1}(1-\gamma)^{-1}\\
    &+c_{k+1}\tau_h\tau_d\sum_{i,j}\left((\epsilon+\hat{V}_{k}(i,j))^{-\frac{1}{2}}-(\epsilon+\hat{V}_{k+1}(i,j))^{-\frac{1}{2}}\right)\\
    &+(c+c_{k+1}a^{-1}\epsilon^{-\frac{1}{2}}\bar{L}_W^2)\mathbb{E}[\|C(X_{k+1})-Y_{k+1}\|^2|\mathcal{F}_k]\\
    &+\frac{L_h}{2}\alpha_k^2np(1-\eta_2)^{-1}(1-\gamma)^{-1}-c\|C(X_{k})-Y_k\|^2,
\end{aligned}\]
where the inequality $(i)$ follows from inequalities \eqref{eq:i1}-\eqref{eq:i3}, and Proposition \ref{pr:xkdist}.

By setting $\alpha_{k+1}\le \alpha_k$, we obtain $c_k=\sum_{j=k}^{\infty}\eta_1^{j-k}\alpha_j\le(1-\eta_1)^{-1}\alpha_k$. Then it follows from Proposition \ref{pr:yktrack} that,
    \[\begin{aligned}
        &(c+c_{k+1}a^{-1}\epsilon^{-\frac{1}{2}}\bar{L}_W^2)\mathbb{E}[\|C(X_{k+1})-Y_{k+1}\|^2|\mathcal{F}_k]-c\|C(X_{k})-Y_k\|^2\\
        \le{}& c(1+a^{-1}\alpha_k)\mathbb{E}[\|C(X_{k+1})-Y_{k+1}\|^2|\mathcal{F}_k]-c\|C(X_{k})-Y_k\|^2\\
        \le{}&  c((1+a^{-1}\alpha_k)(1-b_k)^2-1)\|C(X_{k})-Y_k\|^2+4c(1+a^{-1}\alpha_k)\tau_x^2\tau_m^2\|X_{k+1}-X_{k}\|^2\\
        &+2c(1+a^{-1}\alpha_k)b_k^2\tau_x^4v_m^2
        \overset{(i)}{\le}{} 2c(1+a^{-1}\alpha_k)\left(2\tau_x^2\tau_m^2\alpha_{k}^2np(1-\eta_2)^{-1}(1-\gamma)^{-1}+\tau_x^4v_m^2b_k^2\right),
    \end{aligned}\]
where the inequality $(i)$ follows from the inequalities $(1+a^{-1}\alpha_k)(1-b_k)^2\le(1+b_k)(1-b_k)^2 =(1-b_k^2)(1-b_k)\le 1$.

By $c_{k+1}=\sum_{j=k+1}^{\infty}\eta_1^{j-k-1}\alpha_j$, it is easy to verify that $\eta_1c_{k+1}=c_k-\alpha_k$ and $c_{k+1}\ge\alpha_{k+1}$. Then we have
    \[\begin{aligned}
    &\mathbb{E}[J_{k+1}|\mathcal{F}_k]-J_k\\
    \le{}&-\alpha_{k+1}(1-\eta_1)\left(1-\frac{a}{4}\right)(\epsilon+\tau_d^2)^{-\frac{1}{2}}\|\nabla h(X_{k+1})\|^2\\
    &+2c(1+a^{-1}\alpha_k)\left(2\tau_x^2\tau_m^2\alpha_{k}^2np(1-\eta_2)^{-1}(1-\gamma)^{-1}+\tau_x^4v_m^2b_k^2\right)
    +L_h np(1-\eta_1)^{-1}(1-\eta_2)^{-1}(1-\gamma)^{-1}\alpha_k^2\\
    &+c_{k+1}\tau_h\tau_d\sum_{i,j}\left((\epsilon+\hat{V}_{k}(i,j))^{-\frac{1}{2}}-(\epsilon+\hat{V}_{k+1}(i,j))^{-\frac{1}{2}}\right)
    +\frac{L_h}{2}\alpha_k^2np(1-\eta_2)^{-1}(1-\gamma)^{-1}.
\end{aligned}\]
Defining $\tilde{\eta}=(1-\eta_1)^{-1}(1-\eta_2)^{-1}(1-\gamma)^{-1}$, and rearranging terms above and telescoping from $k=0,\dots,K-1$, we have
\[\begin{aligned}
   &\sum_{k=0}^{K-1}\alpha_{k+1} (1-\eta_1)\left(1-\frac{a}{4}\right)(\epsilon+\tau_d^2)^{-\frac{1}{2}}\mathbb{E}[\|\nabla h(X_{k+1})\|^2]\\
   \le{}& J_0-\mathbb{E}[J_K]+\sum_{k=0}^{K-1}2c(1+a^{-1}\alpha_k)(2\tau_x^2\tau_m^2\alpha_{k}^2np(1-\eta_1)\tilde{\eta}+\tau_x^4v_m^2b_k^2)\\
   &+\sum_{k=0}^{K-1}L_h np\tilde{\eta}\frac{3-\eta_1}{2}\alpha_k^2+\sum_{k=0}^{K-1} c_{k+1}\tau_h\tau_d\sum_{i,j}\left((\epsilon+\hat{V}_{k}(i,j))^{-\frac{1}{2}}-(\epsilon+\hat{V}_{k+1}(i,j))^{-\frac{1}{2}}\right)\\
  \overset{(i)}{\le}{}&  J_0+(1-\eta)^{-1}\alpha_K\tau_h\tau_d\epsilon^{-\frac{1}{2}}+\sum_{k=0}^{K-1}2c(1+a^{-1}\alpha_k)(2\tau_x^2\tau_m^2\alpha_{k}^2np(1-\eta_1)\tilde{\eta}+\tau_x^4v_m^2b_k^2)\\
   &+\sum_{k=0}^{K-1}L_h np\tilde{\eta}\frac{3-\eta_1}{2}\alpha_k^2+(1-\eta_1)^{-1}\alpha_0\tau_h\tau_d\sum_{i,j}\left((\epsilon+\hat{V}_{0}(i,j))^{-\frac{1}{2}}-(\epsilon+\hat{V}_{K}(i,j))^{-\frac{1}{2}}\right),
\end{aligned}\]
where the inequality $(i)$ holds since
\[\begin{aligned}
    -\mathbb{E}[J_K]={}&-(h(X_K)-h(X_*))+c_k\inner{\nabla h(X_{K}),(\epsilon+\hat{V}_{K})^{\odot(-\frac{1}{2})}\odot B_{K}}-c\|C(X_{k})-Y_K\|^2\\
    \le{} &c_K\epsilon^{-\frac{1}{2}}\|\nabla h(X_{K})\|\|B_{K}\|\le(1-\eta_1)^{-1}\alpha_K\tau_h\tau_d\epsilon^{-\frac{1}{2}}.
\end{aligned}\]
  By setting $\alpha_k=s_1b_k=\frac{s_2}{\sqrt{K}}$ with $2\ge s_1>0$ and $s_2>0$, we can arrive that  
\[\begin{aligned}
    \frac{1}{K}\sum_{k=1}^{K}\mathbb{E}\left[\left\|\nabla h(X_k)\right\|^2\right]\le{}& \frac{1}{\sqrt{K}}\frac{2(\epsilon+\tau_d^2)^{\frac{1}{2}}}{1-\eta_1}\left(s_2^{-1}J_0+\frac{(\tau_h\tau_d)\epsilon^{-\frac{1}{2}}(np+1)}{s_2(1-\eta_1)\sqrt{K}}+8cs_2\tau_x^2\tau_m^2np(1-\eta_1)\tilde{\eta}\right.\\
    &\left.+4s_1^{-2}s_2c\tau_x^4v_m^2+\frac{3-\eta_1}{2}L_hs_2\tilde{\eta}np\right).
\end{aligned}\]
The proof is completed.
\end{proof}
\section{Numerical Experiments}\label{numerical}
In this section, we investigate the numerical performance of our proposed algorithms on the aforementioned GCCA problem.  All of the numerical experiments in this section are performed in MATLAB R2019b under an Ubuntu 20.04.6 operating system on a workstation with an Intel(R) Xeon(R) Silver 4110 CPU at 2.10GHz and 384GB of RAM.

 \subsection{Basic settings}
We test the GCCA problem formulated by \eqref{p:GCCA} with two choices of merit function $g$ including the identity mapping and the Huber loss function. Our experiments are conduct on three frequently used real datasets in machine learning, including Mediamill \cite{snoek_mediamill_2006}, MNIST \cite{lecun1998mnist}, and noisy-MNIST (n-MNIST) \cite{basu2017learning}.  
The Mediamill dataset contains keyframes of a video, with each annotated with 101 labels and consisting of 120 features. We use its first 43900 samples to evaluate the correlation structure between the labels and features.
The MNIST dataset contains gray-scale images of handwritten digits, each sized at $28\times 28$ pixels. We utilize its full training set including 6000 samples to learn correlated representations between the left and right halves of the images. Moreover, the n-MNIST dataset includes three subsets of data created by adding additive white Gaussian noise, motion blur, and a combination of additive white Gaussian noise and reduced contrast to the MNIST dataset, respectively. We perform GCCA to explore the correlation relationships among the three noisy variants of MNIST.
A short summary of the real data examples is given in Table \ref{ta:data}.

\begin{table}[htbp]
    \centering
    \begin{tabular}{|c|c|c|c|}
    \hline
       Name  &  Feature number & Sample size& Remark \\
       \hline
        Mediamill \cite{snoek_mediamill_2006}& 120,101& 43900 &Vedio keyframes with multi-labels\\
        \hline
     MNIST \cite{lecun1998mnist} & 392,392& 60000 & Hand-written digits in gray-scale\\
     \hline
    n-MNIST \cite{basu2017learning} &784,784,784& 60000&Noisy variants of MNIST\\
     \hline
    \end{tabular}
    \caption{Descriptions of real datasets}
    \label{ta:data}
\end{table}

We compare our developed algorithms CDFSG (Algorithm \ref{cdfsg}) and  CDFSG-Ada (Algorithm \ref{acdfsg}) with the landing algorithm. 
Particularly, for the experiments conducted on Mediamill and MNIST with $g$ being the identity function, we also compare them with the Riemannian stochastic gradient-based (RSG$+$) algorithm \cite{meng2021online}, which is an algorithm  specially proposed for solving \eqref{eq:classcca}. 

If not specified, in CDFSG and CDFSG-Ada, the step sizes are fixed as $\alpha_k=s_1b_k=s_2$. The parameters $s_1$ and $s_2$ are picked by grid-search, with $s_1$ selected from the set $\{2^{-k}, k=1,2,3\}$ and $s_2$ from the set $\{l\times 10^{-k},l=1,5,k=1,2,3\}$. 
The parameters of the other compared algorithms are chosen as their default values.
All of the compared algorithms are performed from the same randomly generated initial points, and terminated after one pass over the data with a fixed batch size $l$. Furthermore, All results shown in this paper are the average of 10 runs from different initial points.

For the experiments conducted on Mediamill and MNIST with $g$ being the identity function, the accuracy for $X=((X^{[1]})^\top,(X^{[2]]})^\top)^\top$ is measured by the Proportion of Correlations Captured (PCC) \cite{ma2015finding,ge2016efficient,meng2021online}, which is defined as 
\[\mathrm{PCC}(X) =\frac{\mathrm{TCC}(Z_1X^{[1]]},Z_2X^{[2]})}{\mathrm{TCC}(Z_1\bar{X}^{[1]},Z_2\bar{X}^{[2]})},\]
where $Z_1$ and $Z_2$ denote the data matrices corresponding to two subsets of features from the Mediamill or MNIST dataset, $\bar{X}^{[1]}\in\mathbb{R}^{n_1\times p}$ and $\bar{X}^{[2]}\in\mathbb{R}^{n_2\times p}$ represent the ground-truth solution, and the Total Correlations Captured (TCC) between two matrices represents the sum of their canonical correlations.
Intuitively, PCC characterizes the proportion of correlations captured by certain algorithms compared with the ground-truth solution. Therefore, a higher PCC corresponds to a better solution of the corresponding algorithms. 

It is worth mentioning that all the algorithms in comparison, except for RSG$+$ are infeasible methods, the iterates of which are usually infeasible.
Conventionally, after obtaining a solution $X$ by applying an infeasible approach for minimizing over the generalized  Stiefel manifold, one step of orthogonalization can be performed as a post-processing to achieve a high accuracy for feasibility \cite{xiao_constraint_2022}. However, when $M$ is expectation-formulated as given in \ref{sogse}, such a post-processing scheme becomes intractable. To address this issue, we observe that in CDFSG and CDFSG-Ada, the sequence $\{Y_k\}_{k\ge 0}$ effectively tracks the value of $\{X_k^\top MX_k\}_{k\ge0}$. Therefore, after the last iteration $K$, we can conduct $X_K(Y_K)^{-\frac{1}{2}}$ as a post-processing step for both CDFSG and CDFSG-Ada. As the landing algorithm does not include such sequence, we implement $X(X^\top\hat{M}X)^{-\frac{1}{2}}$ as an alternative, where $\hat{M}$ is an approximation of $M$ obtained from minibatch data. In the following, we use the subscript ``$\star$'' to represent the results after an orthogonalization post-processing, and use the subscript ``$*$'' to represent the results after our proposed post-processing.

\subsection{Robustness to penalty parameter}

In this subsection, we present the numerical performance of CDFSG and CDFSG-Ada on the MNIST dataset under different choices of the penalty parameter $\beta$. 

Figure \ref{fig:betarobust-i} and Figure \ref{fig:betarobust-h} exhibit the performance of CDFSG and CDFSG-Ada with different values of $\beta$ and different choices of $g$ for the CCA problem \eqref{p:GCCA}, where we fix the dimension $p$ as $5$ and choose the batch size as $100$. From the results in these figures, we can observe that the numerical performance of CDFSG and CDFSG-Ada exhibit consistent under different values of $\beta$. These results further demonstrate the robust performance of both  CDFSG and CDFSG-Ada with respect to the penalty parameter $\beta$. Based on these observations, we set $\beta=0.1$ as the default value for both CDFSG and CDFSG-Ada in subsequent experiments.

\begin{figure}[htbp]
    \centering
    \subfigure[PCC]{\includegraphics[width=0.45\hsize, height=0.30\hsize]{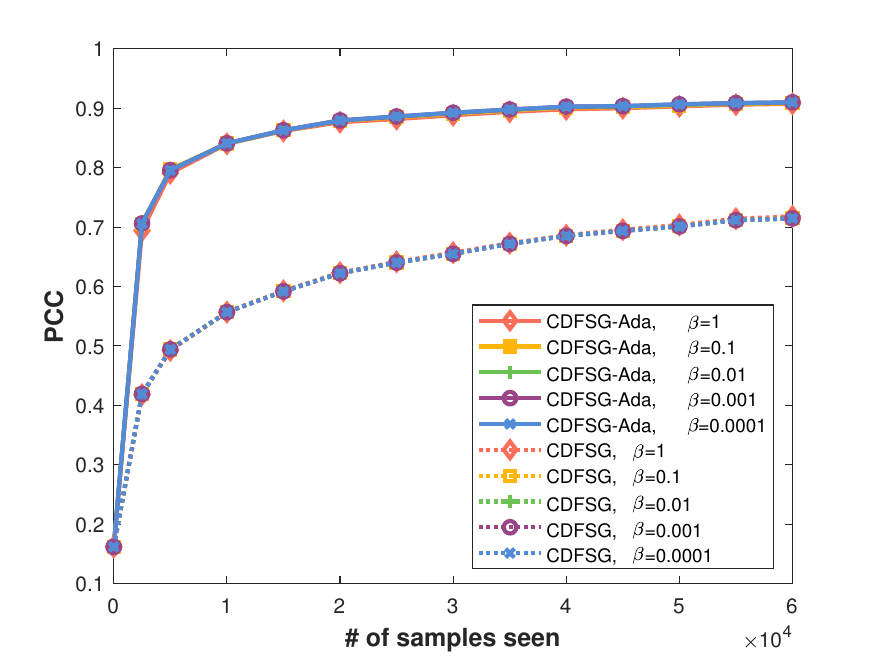}}
    \subfigure[Feasibility violation]{\includegraphics[width=0.45\hsize, height=0.30\hsize]{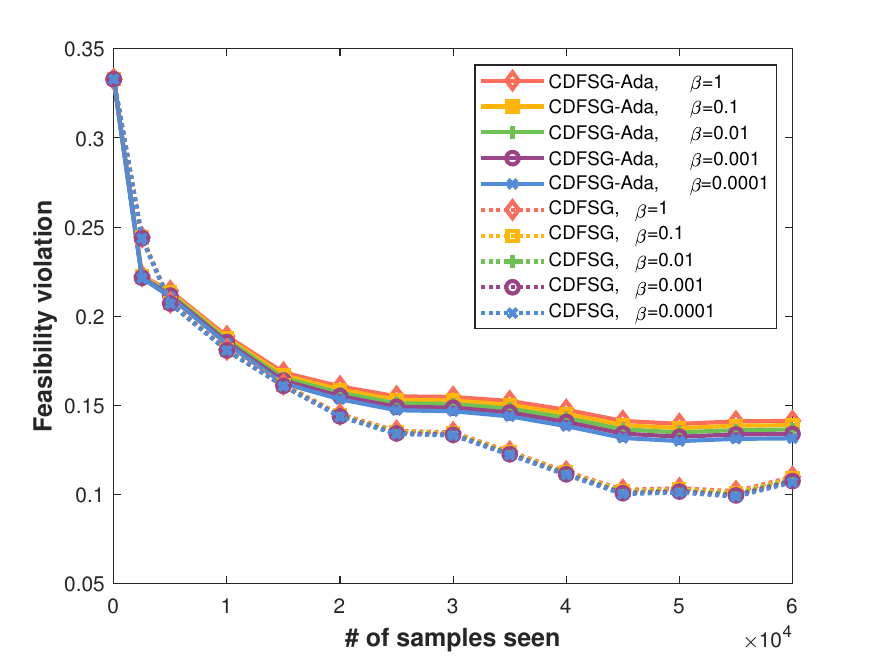}}
    \caption{Performance of CDFSG and CDFSG-Ada with different values of $\beta$ on MNIST for CCA with $g$ as the identity mapping.}
    \label{fig:betarobust-i}
\end{figure}
\begin{figure}[htbp]
    \centering
    \subfigure[Function value]{\includegraphics[width=0.45\hsize, height=0.30\hsize]{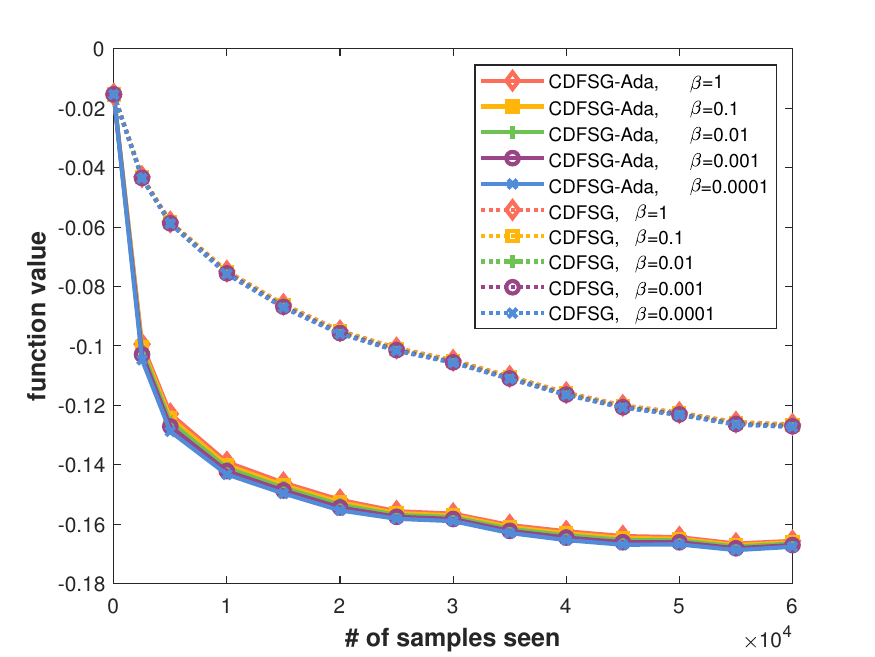}}
    \subfigure[Feasibility violation]{\includegraphics[width=0.45\hsize, height=0.30\hsize]{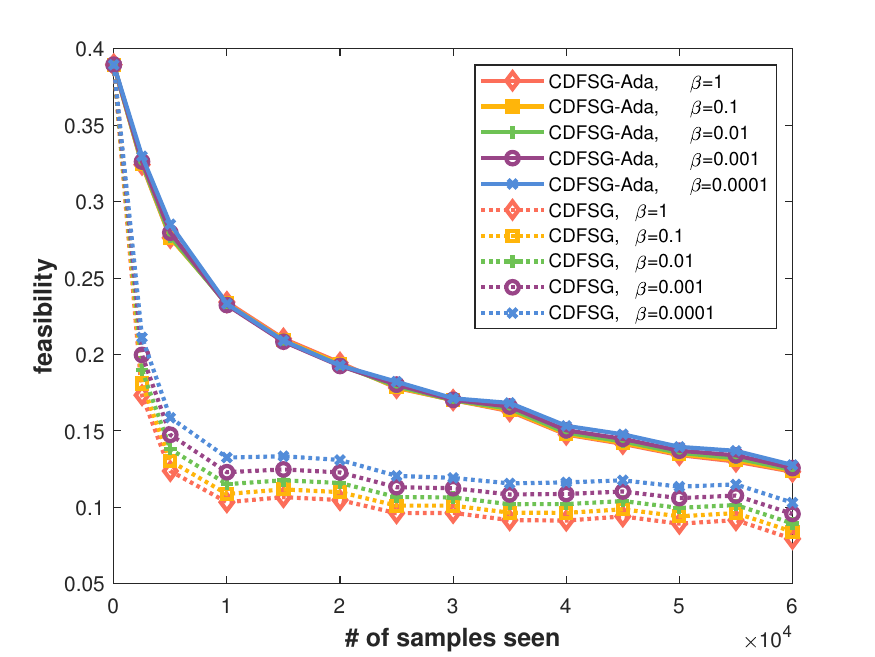}}
    \caption{Performance of CDFSG and CDFSG-Ada with different values of $\beta$ on MNIST for CCA with $g$ as the Huber loss function.}
        \label{fig:betarobust-h}
\end{figure}

\subsection{Numerical results for GCCA with $g$ as the identity mapping}
In this subsection, we compare the numerical performance of CDFSG and CDFSG-Ada with the the landing algorithm and the Riemannian stochastic gradient descent (RSG+) method \cite{meng2021online} on GCCA problems for Mediamill dataset, MNIST dataset and n-MNIST dataset, where the merit function $g$ is chosen as the identity mapping.  The test results on three datasets for different $p$ and batch sizes $l$ are presented in Tables \ref{ta:pcctimemd}-\ref{ta:fvalnmni}, respectively. Additionally, Figures \ref{fig:pcc} and \ref{fig:nmni} illustrate the results with the batch size $l$ fixed at $100$.

From these numerical results, we can conclude that both CDFSG and CDFSG-Ada consistently outperform competing algorithms across test instances, achieving higher accuracy and lower feasibility violations. Additionally, our proposed post-processing procedure successfully improves solution feasibility without compromising accuracy. 
Moreover, as shown in Tables \ref{ta:pcctimemd}-\ref{ta:fvalnmni}, the computational efficiency of CDFSG and CDFSG-Ada is comparable to the Landing algorithm while requiring significantly less CPU time than RSG+. The performance of our proposed algorithms remains robust across varying batch sizes, whereas the Landing algorithm exhibits poor performance and fails to converge with small batch sizes. These results align with our theoretical expectations, as the convergence of the Landing algorithm depends on maintaining iterations within the neighborhood of $\Smnp$.

 \begin{table}[htbp]
    \centering
    \scalebox{0.6}{\begin{tabular}{c?c?c|c|c|c|c|c?c|c|c|c|c|c}
    \Xhline{1.2pt}
     \multirow{2}{*}{Batch size}  &\multirow{2}{*}{Solver}&\multicolumn{6}{c?}{$p=5$} & \multicolumn{6}{c}{$p=10$}\\
    \cline{3-14}
    &&PCC&Fea&Time (s)&PCC*&Fea*&$\text{PCC}^\star$&PCC&Fea&Time (s)&PCC*&Fea*&$\text{PCC}^\star$\\
    \Xhline{1.2pt}
       \multirow{4}{*}{$l=20$}
       &CDFSG     &{0.78}&{{0.17}}&0.75&{0.79}&{{0.14}}&0.78&0.53&{0.37}&0.86&0.53&{0.27}&0.53   \\
    \cline{2-14}
       &CDFSG-Ada&\textbf{{0.80}}&{0.19}&0.82&\textbf{{0.81}}&\textbf{{0.11}}&0.80&\textbf{{0.78}}&{0.36}&0.97&\textbf{{0.78}}&\textbf{{0.20}}&0.78\\
     \cline{2-14}
       &Landing    &0.28&0.29&0.86&0.29&0.79&0.27&0.10&0.52&0.96&0.03&4.97&0.09\\
    \cline{2-14}
       &RSG+       &0.53&-&{75.66}&-&-&-&0.56&-&{265.30}&-&-&-\\
    \Xhline{1.2pt}
            \multirow{4}{*}{$l=50$}
       &CDFSG     &{0.76}&{{0.16}}&0.45&{0.76}&{{0.10}}&0.76&{0.65}&{0.33}&0.51&{0.65}&{0.22}&0.65  \\
    \cline{2-14}
       &CDFSG-Ada&\textbf{{0.85}}&{0.19}&0.47&\textbf{{0.85}}&\textbf{{0.09}}&0.85&\textbf{{0.81}}&{0.30}&0.55&\textbf{{0.81}}&\textbf{{0.15}}&0.81   \\
     \cline{2-14}
       &Landing    &0.38&0.28&0.47&0.39&0.67&0.38&0.32&0.36&0.52&0.31&1.07&0.32 \\
    \cline{2-14}
       &RSG+       &0.62&-&{80.72}&-&-&-&0.60&-&{261.92}&-&-&-\\
    \Xhline{1.2pt}
        \multirow{4}{*}{$l=100$}
       &CDFSG     &{0.74}& {{0.11}}&0.36&{0.74}& {{0.10}}&0.74 &0.55&{0.29}&0.43&0.55&{0.22}&0.55   \\
    \cline{2-14}
       &CDFSG-Ada&\textbf{{0.82}}&{0.13}&0.37&\textbf{{0.82}}&{\textbf{0.08}}&0.82 &\textbf{{0.74}}&{0.28}&0.45&\textbf{{0.74}}&{\textbf{0.13}}&0.74  \\
     \cline{2-14}
       &Landing   &0.48&0.26&0.33&0.47&0.68&0.49 &0.21&0.31&0.37&0.22&1.01&0.21\\
    \cline{2-14}
       &RSG+       &0.61&-&{135.39}&-&-&-&0.60&-&{278.84}&-&-&-\\
    \Xhline{1.2pt}
            \multirow{4}{*}{$l=200$}
       &CDFSG    &{0.68}&{0.09}&0.29&{0.68}&{0.08}&0.68&{0.57}&{0.22}& 0.35&{0.57}&{0.16}&0.57  \\
    \cline{2-14}
       &CDFSG-Ada&\textbf{{0.83}}&{0.12}&0.29&\textbf{{0.83}}&{\textbf{0.06}}&0.83 &\textbf{{0.73}}&{0.20}&0.36&\textbf{{0.73}}&{\textbf{0.10}}&0.73 \\
     \cline{2-14}
       &Landing   &0.37&0.24&0.25&0.37&0.34&0.21&0.20&0.29& 0.28&0.20&0.49&0.20  \\
    \cline{2-14}
       &RSG+       &0.59&-&{150.70}&-&-&-&0.54&-&{292.33}&-&-&-\\
       \Xhline{1.2pt}
    \end{tabular}}
    \caption{PCC, feasibility violation (Fea), and  CPU time of CDFSG, CDFSG-Ada, Landing, and RSG+ with different batch sizes on Mediamill for different $p$.}
    \label{ta:pcctimemd}
\end{table}

 \begin{table}[htbp]
    \centering
    \scalebox{0.6}{\begin{tabular}{c?c?c|c|c|c|c|c?c|c|c|c|c|c}
    \Xhline{1.2pt}
     \multirow{2}{*}{Batch size}  &\multirow{2}{*}{Solver}&\multicolumn{6}{c?}{$p=5$} & \multicolumn{6}{c}{$p=10$}\\
    \cline{3-14}
    &&PCC&Fea&Time (s)&PCC*&Fea*&$\text{PCC}^\star$&PCC&Fea&Time (s)&PCC*&Fea*&$\text{PCC}^\star$\\
    \Xhline{1.2pt}
         \multirow{4}{*}{$l=20$}
         &CDFSG   &{0.90}&{0.23}&2.18&{0.90}&{{0.20}}&0.90&{0.84}&{0.31}&3.41&{0.84}&{0.25}&0.84\\
    \cline{2-14}
       &CDFSG-Ada&\textbf{{0.92}}&{0.29}&2.45&\textbf{{0.92}}&\textbf{{0.15}}&0.92&\textbf{{0.88}}&{0.35}&4.30&\textbf{{0.89}}&\textbf{{0.17}}&0.88 \\
      \cline{2-14}
       &Landing    &{-}   &{-}&{-}&{-}   &{-}& {-}   &{-}    &{-}   &{-}   &{-}    &{-}  & {-}   \\
    \cline{2-14}
       &RSG+       &0.71&-&{40.52}&-&-&-&0.63&-&{28.14}&-&-&-\\
       \Xhline{1.2pt}
    \multirow{4}{*}{$l=50$}
       &CDFSG     &{0.94}& {0.14}&1.77&{0.94}& {{0.12}}&0.94&{0.88}&{{0.29}}&2.42&{0.88}&{{0.20}}&0.88\\
    \cline{2-14}
       &CDFSG-Ada&\textbf{{0.95}}&{0.22}&1.87&\textbf{{0.95}}&\textbf{{0.10}}&0.95&\textbf{{0.91}}&{0.30}&2.83&\textbf{{0.91}}&\textbf{{0.15}}&0.91\\
     \cline{2-14}
       &Landing   &{-}  &{-}&{-}  &{-}&{-}  &{-} &{-}  &{-} &{-}  &{-} &{-}  &{-}  \\
    \cline{2-14}
       &RSG+       &0.73&-&{31.84}&-&-&-&0.67&-&{22.02}&-&-&-\\
    \Xhline{1.2pt}

         \multirow{4}{*}{$l=100$}
         &CDFSG   &{\textbf{0.96}}& {0.14}&1.51&{\textbf{0.96}}& {0.12}&0.96&{\textbf{0.93}}&{{0.20}}&1.69&{\textbf{0.93}}&{{0.16}}&0.93\\
    \cline{2-14}
       &CDFSG-Ada&{\textbf{0.96}}&0.19&1.60&{\textbf{0.96}}&\textbf{{0.09}}&0.96&\textbf{{0.93}}&{0.25}&1.82&\textbf{{0.93}}&{\textbf{0.12}}&0.93 \\
      \cline{2-14}
       &Landing  &0.87&0.14&1.30&0.87&0.45&0.87&0.69&0.23&1.71&0.68&0.82&0.70\\
    \cline{2-14}
       &RSG+ &0.73&-&{28.57}&-&-&- &0.61&-&{135.39}&-&-&-     \\
       \Xhline{1.2pt}
    \multirow{4}{*}{$l=200$}
       &CDFSG     &{0.92}&{0.09}&1.55&{0.92}&\textbf{{0.07}}&0.92&{0.88}&{{0.17}}&1.56&{0.88}&{{0.12}}&0.88\\
    \cline{2-14}
       &CDFSG-Ada&{\textbf{0.96}}&0.17&1.59&{\textbf{0.96}}&0.08&0.96&\textbf{{0.94}}&{0.22}&1.64&\textbf{{0.94}}&{\textbf{0.10}}&0.94\\
     \cline{2-14}
       &Landing   &0.92&0.07&1.42&0.92&0.28&0.92&0.81&0.28&1.54&0.80&0.88&0.81\\
    \cline{2-14} 
    &RSG+      &0.73&-&{27.57}&-&-&-&0.59&-&{150.70}&-&-&-\\
       \Xhline{1.2pt}
    \end{tabular}}
    \caption{PCC, feasibility violation (Fea), and  CPU time of CDFSG, CDFSG-Ada, Landing, and RSG+ with different batch sizes on MNIST for different $p$.}
    \label{ta:pcctimemn}
\end{table}

 \begin{table}[htbp]
    \centering
    \scalebox{0.6}{\begin{tabular}{c?c?c|c|c|c|c|c?c|c|c|c|c|c}
    \Xhline{1.2pt}
     \multirow{2}{*}{Batch size}  &\multirow{2}{*}{Solver}&\multicolumn{6}{c?}{$p=5$} & \multicolumn{6}{c}{$p=10$}\\
    \cline{3-14}
    &&Fval&Fea&Time (s)&Fval*&Fea*&$\text{Fval}^\star$&Fval&Fea&Time (s)&Fval*&Fea*&$\text{Fval}^\star$\\
    \Xhline{1.2pt}
            \multirow{3}{*}{$l=20$}
       &CDFSG     &{-9.51}&{0.20}&194.27&{-9.51}& {0.14} &-9.52 &{-18.65}&  {0.29}&201.33&{-18.65}&  {0.15} &-18.66   \\
    \cline{2-14}
       &CDFSG-Ada&\textbf{{-9.59}}&{0.13}&192.79&\textbf{{-9.59}}&\textbf{{0.07}} &-9.59  &{-18.70}& {0.21}&202.66&\textbf{{-18.71}}& \textbf{{0.11}}&-18.71    \\
     \cline{2-14}
       &Landing  &-&-&-&- &-& - &-&  - &-& -& -& -    \\
    \Xhline{1.2pt}
            \multirow{3}{*}{$l=50$}
       &CDFSG     &{-9.55}& {0.15}&82.99 &{-9.55}& {0.11}&-9.55&\textbf{{-18.67}}&  {0.24}&90.01 &\textbf{{-18.67}}&  {0.15}&-18.67  \\
    \cline{2-14}
       &CDFSG-Ada&\textbf{{-9.61}}&{{0.09}}&85.15&\textbf{{-9.61}}&{\textbf{0.07}} &-9.61  &\textbf{{-18.67}}& {0.14}&85.75&\textbf{{-18.67}}& \textbf{{0.08}}&-18.67      \\
     \cline{2-14}
       &Landing    &-& -  &-& - &-& - &- & -&-&-&- &-  \\
    \Xhline{1.2pt}
        \multirow{3}{*}{$l=100$}
       &CDFSG        &{-9.00}& 0.10 &44.35 &{-9.00}& 0.09&-9.00  &{-17.24}& {0.20}&50.37&{-17.25}& {0.12}&-17.25  \\
    \cline{2-14}
       &CDFSG-Ada&\textbf{-9.61}&{{0.14}}&46.41&\textbf{{-9.61}}&{\textbf{0.07}}&-9.61&\textbf{{-18.79}}&{0.16}&48.52&\textbf{{-18.79}}&\textbf{{0.08}} &-18.79  \\
     \cline{2-14}
       &Landing &  {-8.85}  &0.12 &66.68& -11.15& 1.75 & -8.73 &{-14.92}& 0.19&62.22&-18.90&2.51&-14.69\\
    \Xhline{1.2pt}
            \multirow{3}{*}{$l=200$}
       &CDFSG     &{-8.09}& {0.13}&  27.81&{-8.09}& {0.10}&-8.09& {-15.50}& {{0.24}}&30.78 &{-15.52}& {{0.17}}&-15.53 \\
    \cline{2-14}
       &CDFSG-Ada&\textbf{{-9.46}}& {0.07}&25.85&\textbf{{-9.46}}& {\textbf{0.06}} &-9.46 &\textbf{{-18.62}}&{{0.12}}&28.98&\textbf{{-18.62}}&{\textbf{0.10}}&-18.62    \\
     \cline{2-14}
       &Landing     &{-6.70}&  0.05 &34.94& -7.18 & 0.59&  -6.71 &-12.01&0.10 &36.34&-13.61&1.21&-11.96   \\
    \Xhline{1.2pt}
    \end{tabular}}
    \caption{Function value (Fval), feasibility violation (Fea), and  CPU time of CDFSG, CDFSG-Ada, and Landing with different batch sizes on n-MNIST for different $p$.}
    \label{ta:fvalnmni}
\end{table}

\begin{figure}[htbp]
    \centering
         \subfigure[Mediamill, $p=5$]{\includegraphics[width=0.22\hsize, height=0.2\hsize]{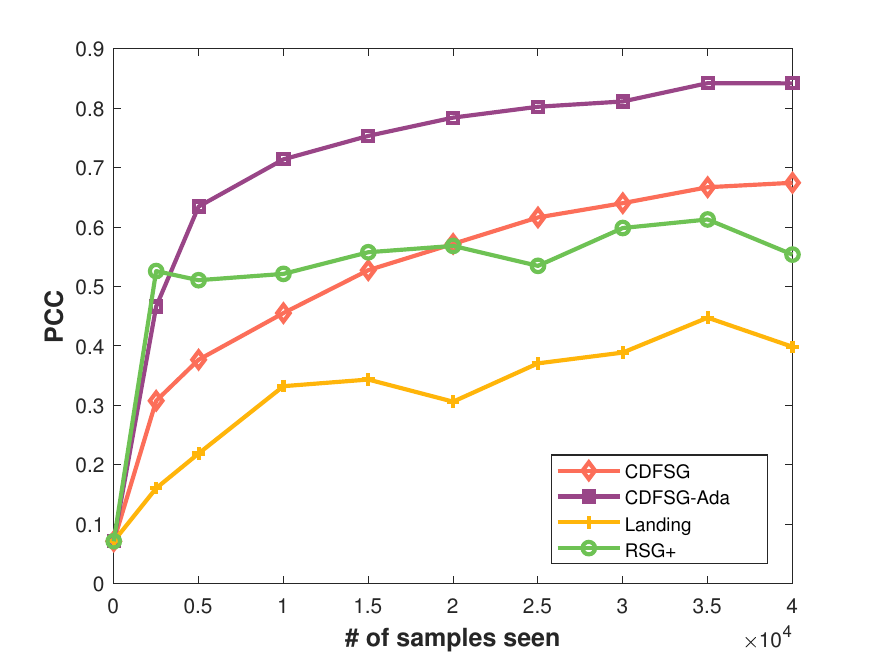}}
      \subfigure[Mediamill, $p=5$]{\includegraphics[width=0.22\hsize, height=0.2\hsize]{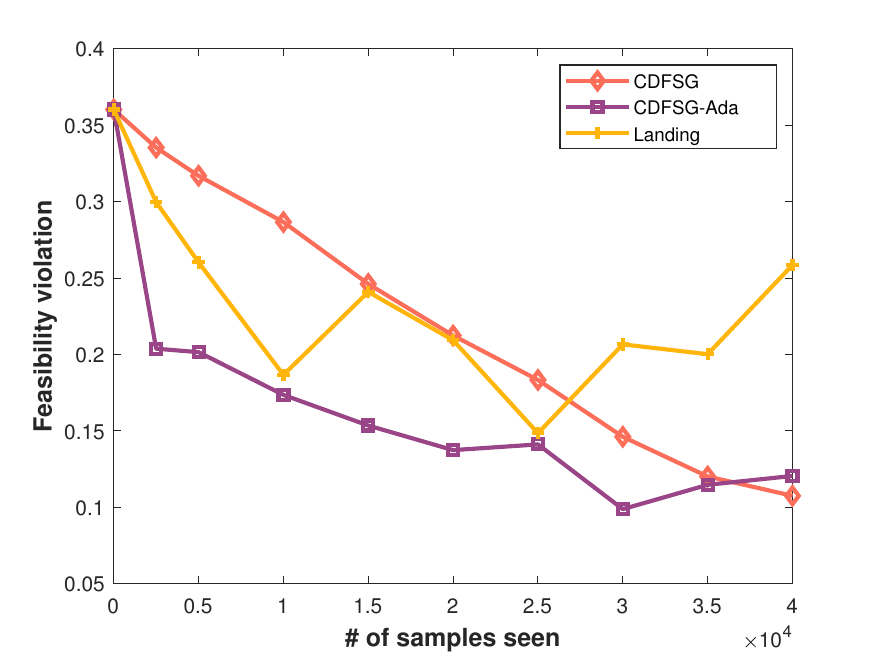}}
    \subfigure[Mediamill, $p=10$]{\includegraphics[width=0.22\hsize, height=0.2\hsize]{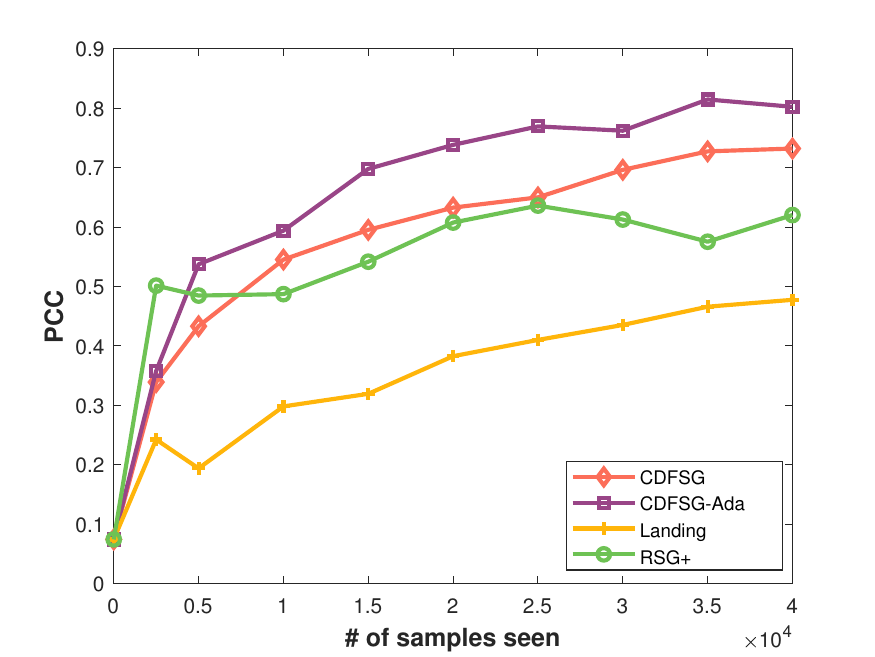}}
     \subfigure[Mediamill, $p=10$]{\includegraphics[width=0.22\hsize, height=0.2\hsize]{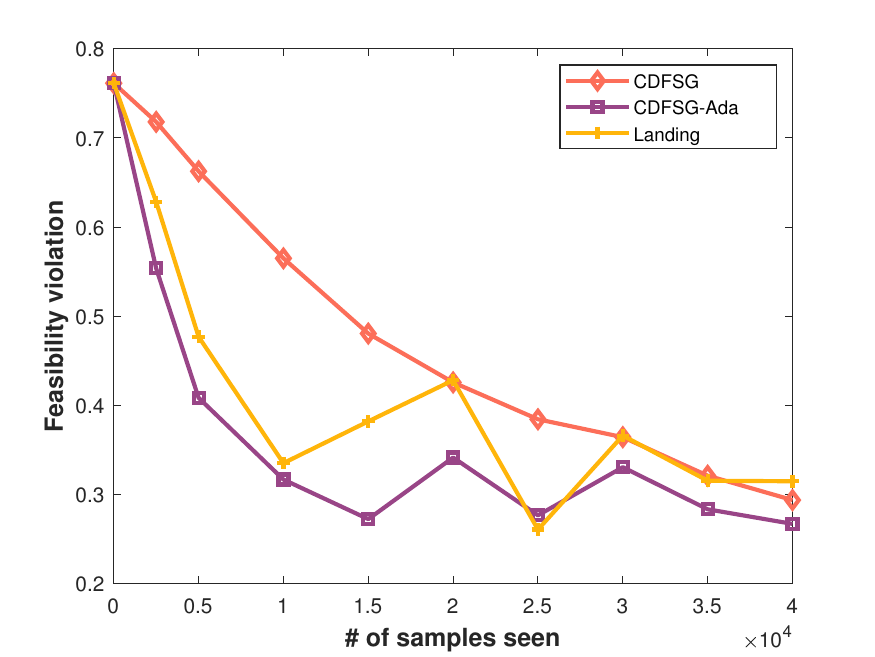}}\\
    \subfigure[MNIST, $p=5$]{\includegraphics[width=0.22\hsize, height=0.2\hsize]{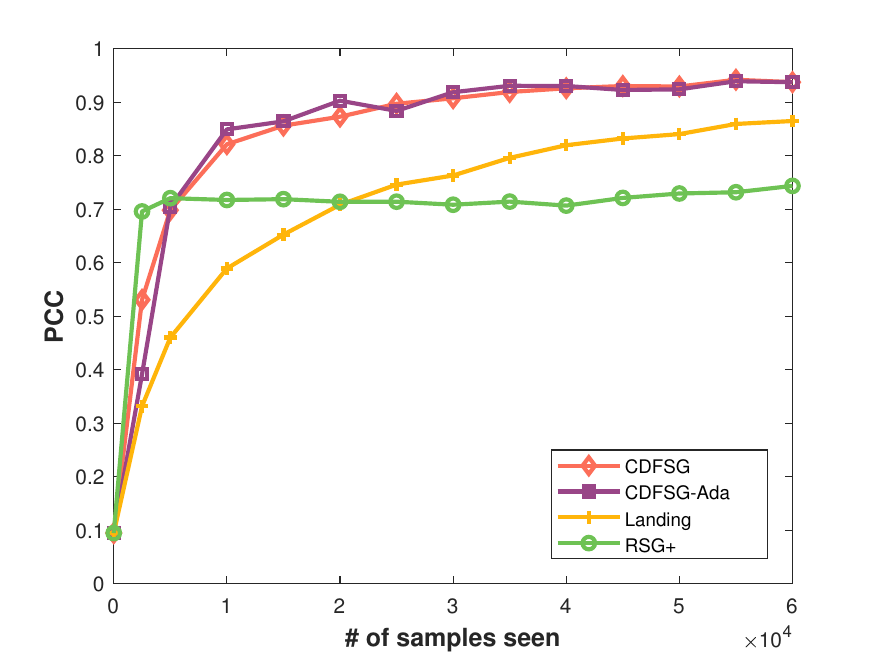}}
    \subfigure[MNIST, $p=5$]{\includegraphics[width=0.22\hsize, height=0.2\hsize]{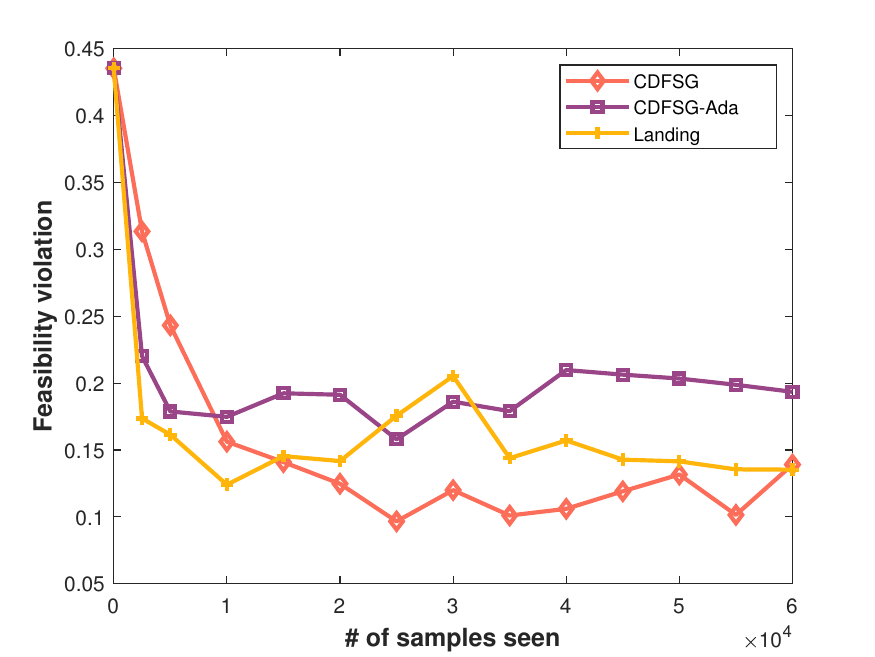}}
    \subfigure[MNIST, $p=10$]{\includegraphics[width=0.22\hsize, height=0.2\hsize]{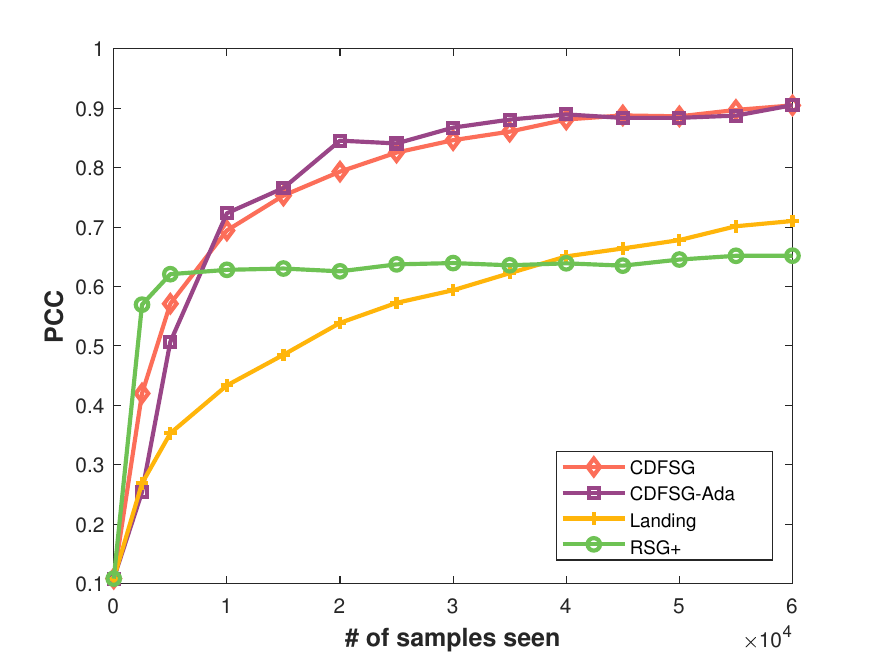}}
     \subfigure[MNIST, $p=10$]{\includegraphics[width=0.22\hsize, height=0.2\hsize]{mn_10_fva_i.pdf}}
    \caption{PCC and feasibility variation of CDFSG, CDFSG-Ada, Landing and RSG+ with batch size fixed at $100$ on Mediamill and MNIST for different $p$.}
    \label{fig:pcc}
\end{figure}

\begin{figure}[htbp]
    \centering
    \subfigure[n-MNIST, $p=5$]{\includegraphics[width=0.22\hsize, height=0.2\hsize]{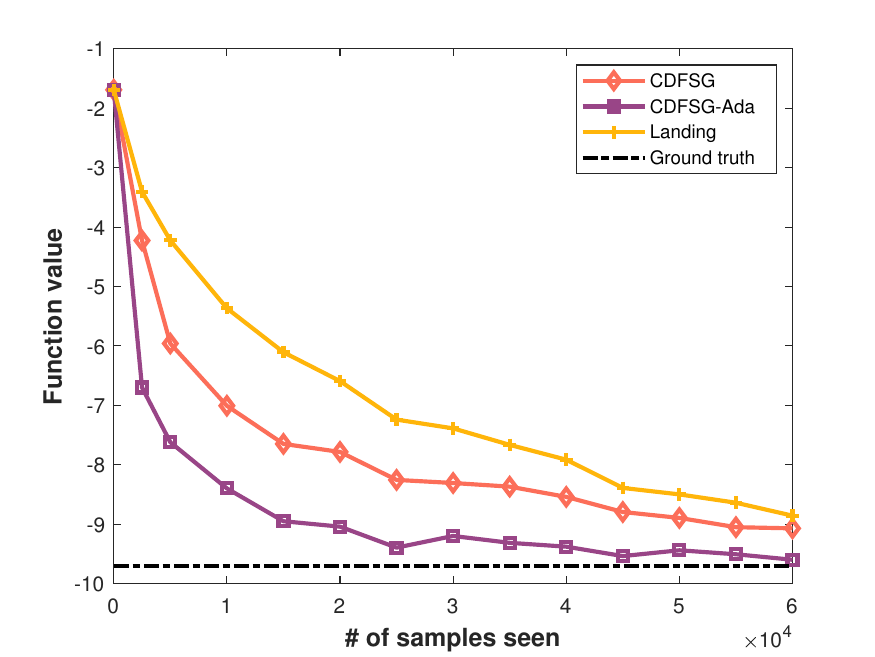}}
    \subfigure[n-MNIST, $p=5$]{\includegraphics[width=0.22\hsize, height=0.2\hsize]{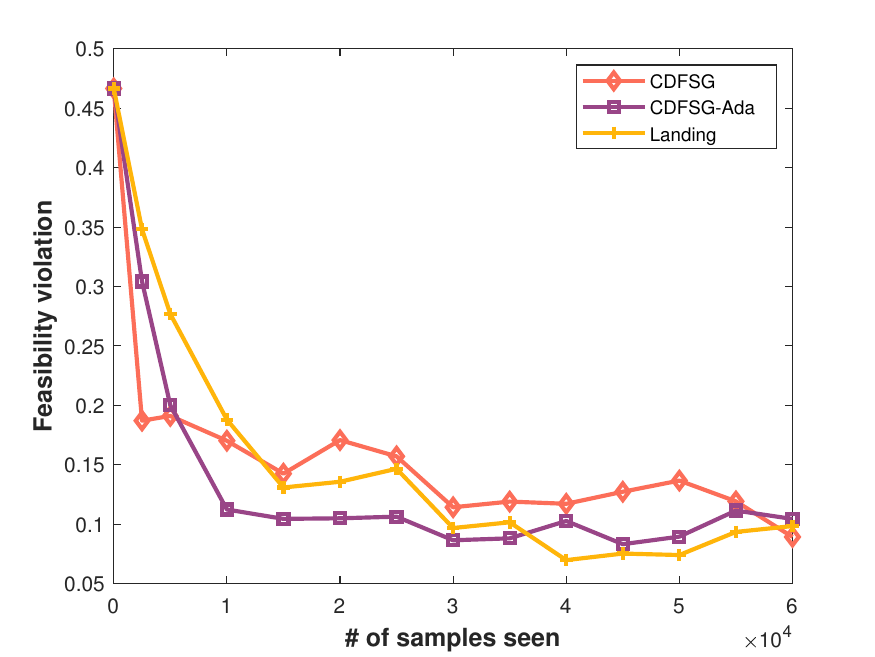}}
    \subfigure[n-MNIST, $p=10$]{\includegraphics[width=0.22\hsize, height=0.2\hsize]{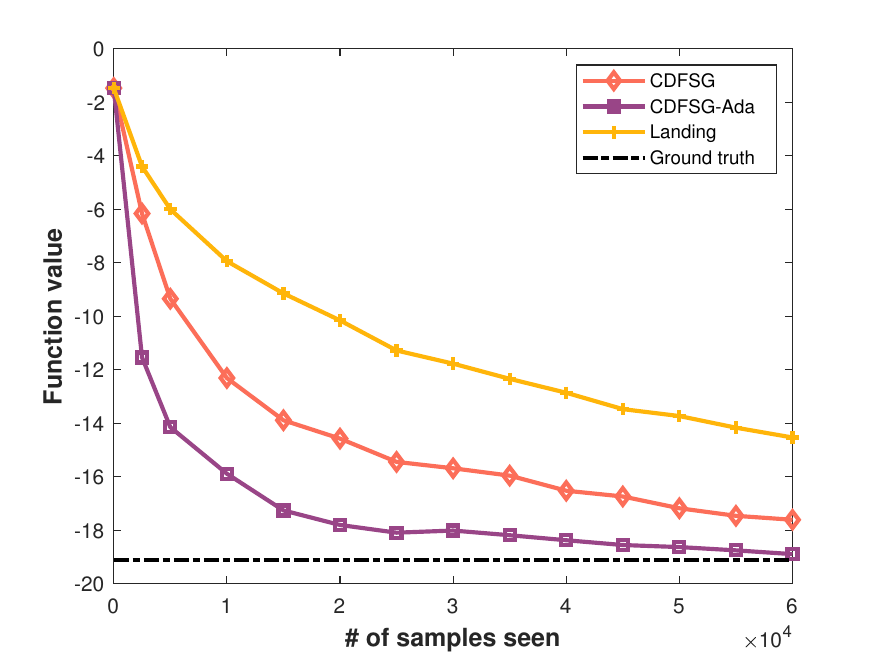}}
     \subfigure[n-MNIST, $p=10$]{\includegraphics[width=0.22\hsize, height=0.2\hsize]{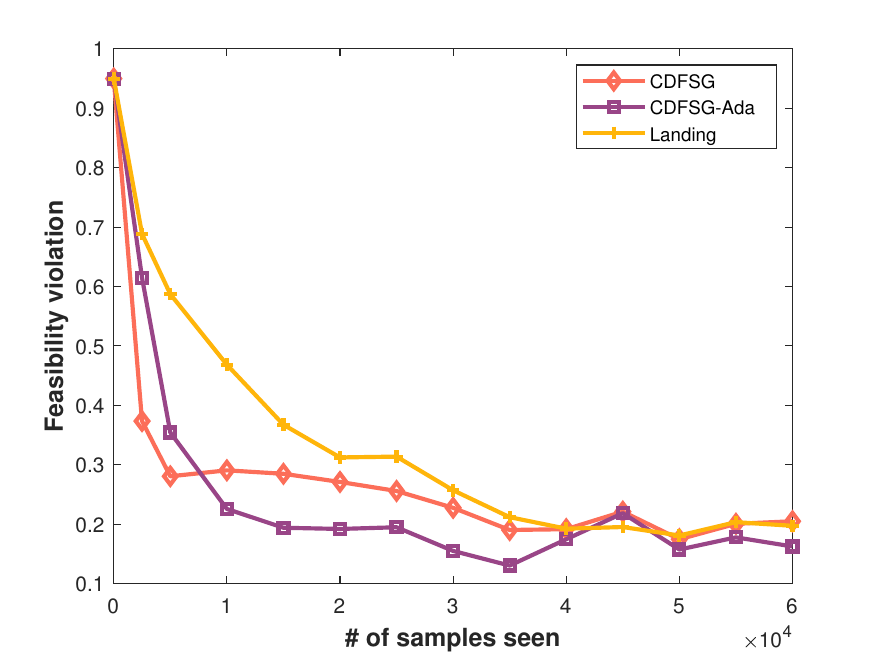}}
    \caption{Function value and feasibility variation of CDFSG, CDFSG-Ada, Landing with batch size fixed at $100$ on n-MNIST for different $p$.}
    \label{fig:nmni}
\end{figure}

\subsection{Numerical results for GCCA with $g$ as the Huber loss function}
In this subsection, we compare the numerical performance of CDFSG and CDFSG-Ada with the the landing algorithm and the Riemannian stochastic gradient descent (RSG+) method \cite{meng2021online} on GCCA problems for Mediamill dataset, MNIST dataset and n-MNIST dataset, where the merit function $g$ is chosen as the Huber loss. Tables \ref{ta:fvalmd}-\ref{ta:fvalnmnh} present the average test results over 10 runs on three datasets for $p=5$ and $p=10$ with different batch sizes $l$, respectively. 
Additionally, Figure \ref{fig:fafe} illustrates the test results with the batch size $l$ fixed at $100$.

We can conclude from Tables  \ref{ta:fvalmn}-\ref{ta:fvalnmnh}  and Figure \ref{fig:fafe} that,  CDFSG and CDFSG-Ada achieve notably lower function values and feasibility violations than the landing algorithm, highlighting their robustness and efficiency in solving \ref{sogse}. Furthermore, our proposed post-processing for CDFSG and CDFSG-Ada does improve the feasibility while maintaining the function value.

 \begin{table}[htbp]
    \centering
    \scalebox{0.6}{\begin{tabular}{c?c?c|c|c|c|c|c?c|c|c|c|c|c}
    \Xhline{1.2pt}
     \multirow{2}{*}{Batch size}  &\multirow{2}{*}{Solver}&\multicolumn{6}{c?}{$p=5$} & \multicolumn{6}{c}{$p=10$}\\
    \cline{3-14}
    &&Fval&Fea&Time (s)&Fval*&Fea*&$\text{Fval}^\star$&Fval&Fea&Time (s)&Fval*&Fea*&$\text{Fval}^\star$\\
    \Xhline{1.2pt}
            \multirow{3}{*}{$l=20$}
       &CDFSG     &{-0.07}   & {0.14}&0.69&{-0.07}   & {0.11} &-0.07 &{-0.05}&  {0.32}&0.81&{-0.05}&  {0.25} &-0.05    \\
    \cline{2-14}
       &CDFSG-Ada&\textbf{{-0.10}}&{0.16}&0.71&\textbf{{-0.10}}&\textbf{{0.07}} &-0.10  &\textbf{{-0.11}}& {0.36}&0.84&\textbf{{-0.11}}& \textbf{{0.19}}&-0.11    \\
     \cline{2-14}
       &Landing  &{-0.01}  &0.20&0.79& -0.01 &  -1.70 &  -0.01 &{-0.01}&0.40&0.93&-0.02&4.42 &-0.01    \\
    \Xhline{1.2pt}
            \multirow{3}{*}{$l=50$}
       &CDFSG     &{-0.06}& {0.10}&0.40 &{-0.06}& {0.09}&-0.06&{-0.05}&  {0.21}&0.45 &{-0.05}&  {0.17}&-0.05  \\
    \cline{2-14}
       &CDFSG-Ada&\textbf{{-0.11}}&{{0.07}}&0.41&\textbf{{-0.11}}&{\textbf{0.03}} &-0.11  &\textbf{{-0.13}}& {0.14}&0.48 &\textbf{{-0.13}}& \textbf{{0.08}}&-0.13     \\
     \cline{2-14}
       &Landing    & {-0.01}   &0.15&0.44& -0.01 & 0.53&  -0.01&{-0.01}&  0.38&0.48&-0.01&1.59&-0.01  \\
    \Xhline{1.2pt}
        \multirow{3}{*}{$l=100$}
       &CDFSG        &{-0.04}& 0.13 &0.29 &{-0.04}& 0.11&-0.04  &{-0.04}& {0.19}&0.35&{-0.04}& {0.16}&-0.04  \\
    \cline{2-14}
       &CDFSG-Ada&\textbf{{-0.11}}&{{0.13}}&0.30&\textbf{{-0.11}}&{\textbf{0.06}}&-0.11&\textbf{{-0.14}}&{{0.17}}&0.34&\textbf{{-0.14}}&\textbf{{0.10}} &-0.14  \\
     \cline{2-14}
       &Landing &  {0.00}  &0.16 &0.31& 0.00& 0.58 & 0.00 &{-0.01}& 0.31&0.34&-0.01&-0.89&-0.01  \\
    \Xhline{1.2pt}
            \multirow{3}{*}{$l=200$}
       &CDFSG     &{-0.03}& {0.06}&  0.24&{-0.03}& {0.05}&-0.03& {-0.02}& {{0.18}}&0.26 &{-0.02}& {{0.13}}&-0.02 \\
    \cline{2-14}
       &CDFSG-Ada&\textbf{{-0.10}}& {0.08}&0.25&\textbf{{-0.10}}& {\textbf{0.03}} &-0.10 &\textbf{{-0.13}}&{{0.18}}&0.26&\textbf{{-0.13}}&{\textbf{0.08}}&-0.13    \\
     \cline{2-14}
       &Landing     &{-0.01}&  0.14 &0.22& -0.01 & 0.19&  -0.01 &-0.01&0.39 &0.24&-0.01&0.50&-0.01   \\
    \Xhline{1.2pt}
    \end{tabular}}
    \caption{Function value (Fval), feasibility violation (Fea), and  CPU time of CDFSG, CDFSG-Ada, and Landing with different batch sizes on Mediamill for different $p$.}
    \label{ta:fvalmd}
\end{table}

 \begin{table}[htbp]
    \centering
    \scalebox{0.6}{\begin{tabular}{c?c?c|c|c|c|c|c?c|c|c|c|c|c}
    \Xhline{1.2pt}
     \multirow{2}{*}{Batch size}  &\multirow{2}{*}{Solver}&\multicolumn{6}{c?}{$p=5$} & \multicolumn{6}{c}{$p=10$}\\
    \cline{3-14}
    &&Fval&Fea&Time (s)&Fval*&Fea*&$\text{Fval}^\star$&Fval&Fea&Time (s)&Fval*&Fea*&$\text{Fval}^\star$\\
    \Xhline{1.2pt}
          \multirow{3}{*}{$l=20$}
         &CDFSG   &\textbf{{-0.20}}&{0.10}&2.12&\textbf{{-0.20}}&{0.09}&-0.20&{-0.32}&0.18&3.19&{-0.32}&{0.14}&-0.32\\
    \cline{2-14}
       &CDFSG-Ada&\textbf{{-0.20}}&{{0.08}}&2.21&\textbf{{-0.20}}&\textbf{{0.05}}&-0.20&\textbf{{-0.37}}&{{0.16}}&3.40&\textbf{{-0.37}}&{\textbf{0.11}}&-0.37\\
      \cline{2-14}
       &Landing    &{-0.05}   &0.09&2.45&-0.05&0.79&0.05   &{-0.11}  &0.06&1.43&-0.11&0.74&-0.11    \\
       \Xhline{1.2pt}
    \multirow{3}{*}{$l=50$}
       &CDFSG     &{-0.19}&0.08&1.39&{-0.19}&0.06&-0.19 &{-0.36}&0.16&2.03&{-0.36}&0.12&-0.36\\
    \cline{2-14}
       &CDFSG-Ada&\textbf{{-0.21}}&{0.06}&1.44&\textbf{{-0.21}}&\textbf{{0.03}}&-0.21&\textbf{{-0.39}}&{0.13}&2.10&\textbf{{-0.39}}&{\textbf{0.10}}&-0.39\\
     \cline{2-14}
       &Landing    &{- 0.04}  &0.16&3.47&-0.07&3.96 &-0.04   &{-0.07}   &0.10&1.89&-0.07&1.46&-0.07 \\
    \Xhline{1.2pt}
           \multirow{3}{*}{$l=100$}
         &CDFSG   &{-0.19}&0.07&1.34&{-0.19}&0.06&-0.19&{-0.35}&0.14&1.31&{-0.35}&0.12&-0.35 \\
    \cline{2-14}
       &CDFSG-Ada&\textbf{{-0.20}}&0.06&1.23&\textbf{{-0.20}}&\textbf{{0.03}}&-0.20&\textbf{{-0.37}}&{{0.10}}&1.38&\textbf{{-0.37}}&\textbf{{0.05}}&-0.37 \\
      \cline{2-14}
       &Landing     &{-0.07}  &0.04&1.20&-0.07& 0.41 & -0.07&{-0.07}& 0.10&  1.47&-0.07&0.84&-0.07   \\
       \Xhline{1.2pt}
    \multirow{3}{*}{$l=200$}
       &CDFSG     &\textbf{{-0.19}} & {{0.10}}&1.19&\textbf{{-0.19}} & {{0.08}}&-0.19 &{-0.35}&0.15&1.40&{-0.35}&0.13&-0.35 \\
    \cline{2-14}
       &CDFSG-Ada& \textbf{{-0.19}}& {{0.10}}&1.17&\textbf{{-0.19}}& \textbf{{0.05}} &-0.19 &\textbf{{-0.39}}&{0.14}&1.37&\textbf{{-0.39}}&{\textbf{0.05}}&-0.39\\
     \cline{2-14}
       &Landing     & {-0.02} & 0.08&1.24&-0.02& 0.35& -0.02&{-0.06}& 0.10&1.57 &-0.07&0.46&-0.06  \\
       \Xhline{1.2pt}
    \end{tabular}}
    \caption{Function value (Fval), feasibility violation (Fea), and  CPU time of CDFSG, CDFSG-Ada, and Landing with different batch sizes on MNIST for different $p$.}
    \label{ta:fvalmn}
\end{table}

 \begin{table}[htbp]
    \centering
    \scalebox{0.6}{\begin{tabular}{c?c?c|c|c|c|c|c?c|c|c|c|c|c}
    \Xhline{1.2pt}
     \multirow{2}{*}{Batch size}  &\multirow{2}{*}{Solver}&\multicolumn{6}{c?}{$p=5$} & \multicolumn{6}{c}{$p=10$}\\
    \cline{3-14}
    &&Fval&Fea&Time (s)&Fval*&Fea*&$\text{Fval}^\star$&Fval&Fea&Time (s)&Fval*&Fea*&$\text{Fval}^\star$\\
    \Xhline{1.2pt}
            \multirow{3}{*}{$l=20$}
       &CDFSG     &\textbf{{-0.41}}&{0.26}&243.53&\textbf{{-0.41}}& {0.20} &-0.40 &{-0.76}&{0.28}&232.69&{-0.76}& {0.14} &-0.76    \\
    \cline{2-14}
       &CDFSG-Ada&{\textbf{-0.41}}&{0.19}&245.12&\textbf{{-0.41}}&\textbf{{0.10}} &-0.41  &\textbf{{-0.78}}&{0.23}&229.43&\textbf{{-0.78}}&\textbf{{0.11}} &-0.78     \\
     \cline{2-14}
       &Landing  &-&-&-&- &-& - &-&  - &-& -& -& -    \\
    \Xhline{1.2pt}
            \multirow{3}{*}{$l=50$}
       &CDFSG        &{-0.39}& 0.11 &104.50 &{-0.39}& \textbf{0.08}&-0.39  &{-0.75}& 0.23 &95.43 &{-0.74}& 0.18&-0.74 \\
    \cline{2-14}
       &CDFSG-Ada&-0.40&{{0.15}}&102.31&\textbf{{-0.40}}&{\textbf{0.08}}&-0.40 &{\textbf{-0.78}}& 0.20 &95.09 &{\textbf{-0.78}}& \textbf{0.11}&-0.78   \\
     \cline{2-14}
       &Landing    &-& -  &-& - &-& - &- & -&-&-&- &-  \\
    \Xhline{1.2pt}
        \multirow{3}{*}{$l=100$}
       &CDFSG        &{-0.37}& 0.11 &51.80 &{-0.37}& 0.06&-0.37  &{-0.70}& 0.15 &51.90 &{-0.70}& 0.11&-0.70    \\
    \cline{2-14}
       &CDFSG-Ada&{\textbf{-0.39}}& 0.05 &51.42 &{\textbf{-0.39}}& \textbf{0.04}&-0.36 &{\textbf{-0.77}}& 0.08 &50.47 &{\textbf{-0.77}}& \textbf{0.06}&-0.77    \\
     \cline{2-14}
       &Landing &{-0.18}& 0.45 &71.05 &{-0.14}& 0.80&-0.12 &{-0.22}& 0.70 &72.24 &{-0.19}& 1.08&-0.18  \\
    \Xhline{1.2pt}
            \multirow{3}{*}{$l=200$}
       &CDFSG     &{-0.37}& {0.28}&  31.56&{-0.37}& {0.14}&-0.37&{-0.67}& 0.30 &31.42 &{-0.67}& 0.17&-0.67 \\
    \cline{2-14}
       &CDFSG-Ada&\textbf{{-0.39}}& {0.11}&31.65&\textbf{{-0.39}}& {\textbf{0.06}} &-0.39 &{\textbf{-0.76}}& 0.16 &32.33 &{\textbf{-0.76}}& \textbf{0.11}&-0.76    \\
     \cline{2-14}
       &Landing     &{-0.09}&  0.20 &38.63& -0.09 & 0.40&  -0.08 &{-0.11}& 0.50 &37.75 &{-0.10}& 0.77&-0.09   \\
    \Xhline{1.2pt}
    \end{tabular}}
    \caption{Function value (Fval), feasibility violation (Fea), and  CPU Time of CDFSG, CDFSG-Ada, and Landing with different batch sizes on n-MNIST for different $p$.}
    \label{ta:fvalnmnh}
\end{table}

\begin{figure}[htbp]
    \centering
        \subfigure[Mediamill, $p=5$]{\includegraphics[width=0.22\hsize, height=0.2\hsize]{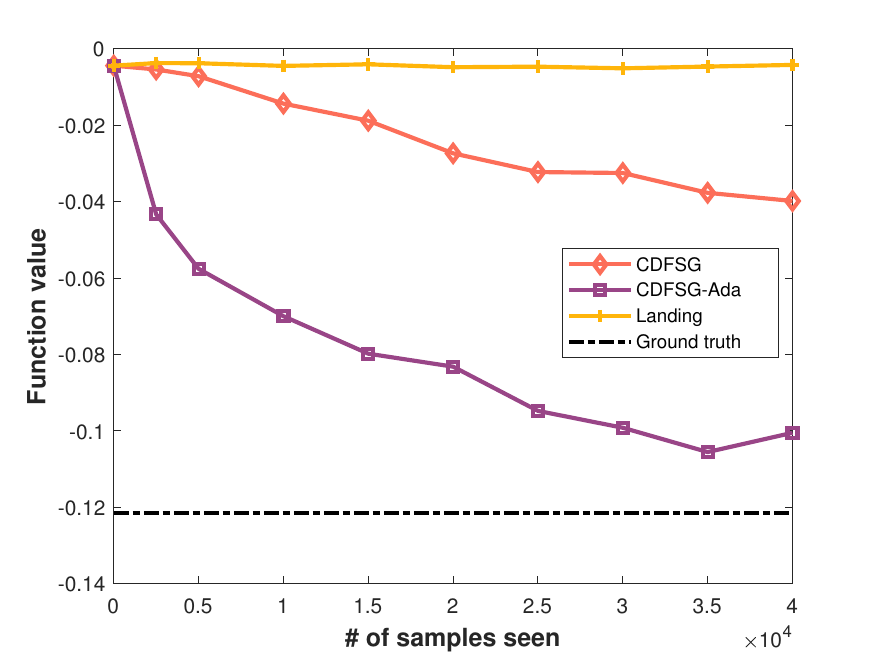}}
    \subfigure[Mediamill, $p=5$]{\includegraphics[width=0.22\hsize, height=0.2\hsize]{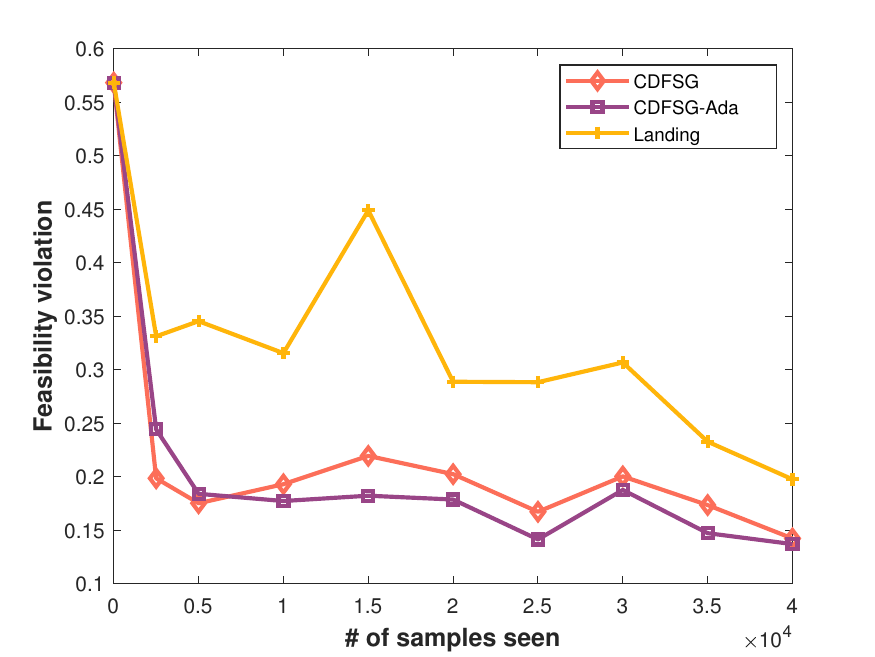}}
     \subfigure[Mediamill, $p=10$]{\includegraphics[width=0.22\hsize, height=0.2\hsize]{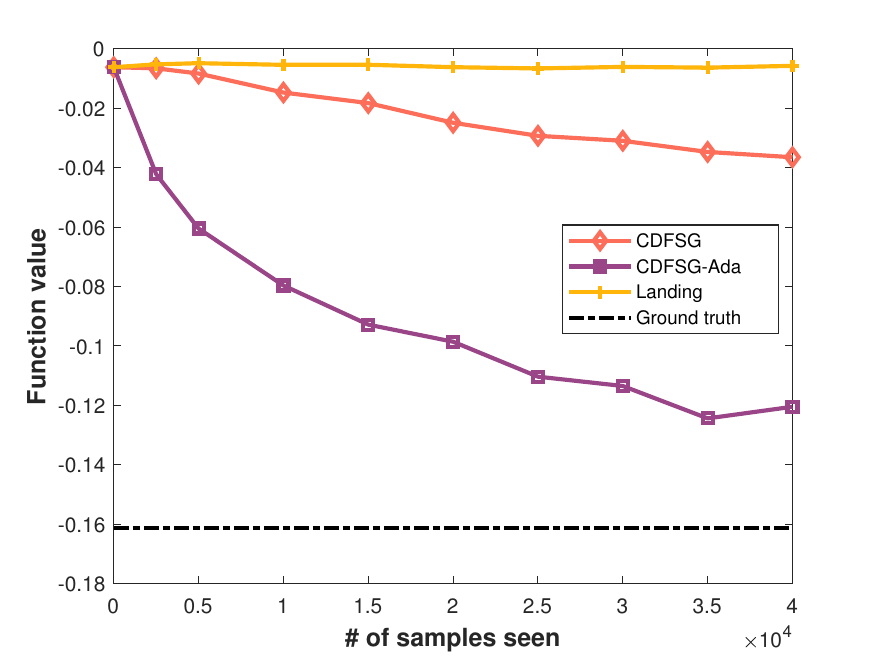}}
    \subfigure[Mediamill, $p=10$]{\includegraphics[width=0.22\hsize, height=0.2\hsize]{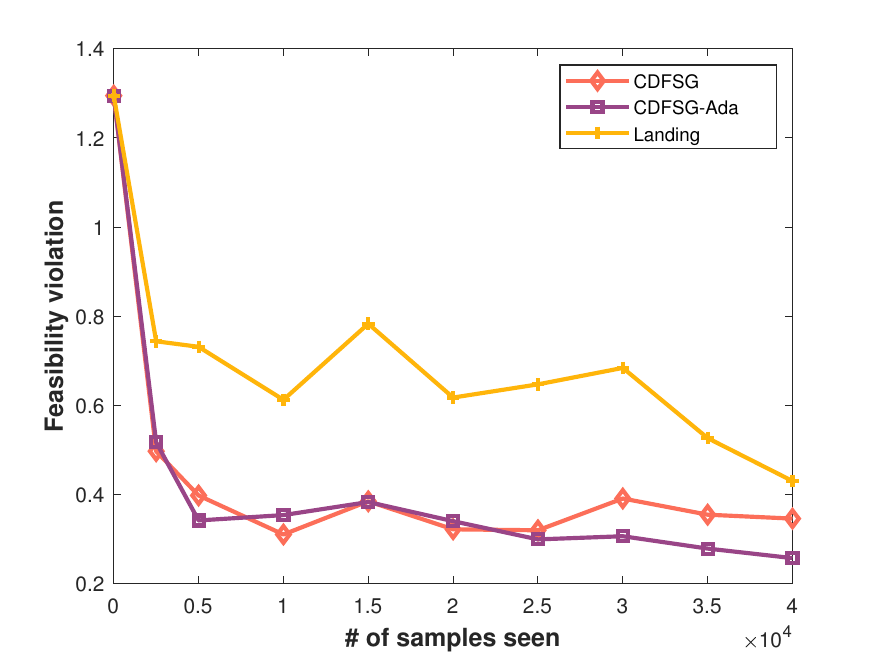}}\\
    \subfigure[MNIST, $p=5$]{\includegraphics[width=0.22\hsize, height=0.2\hsize]{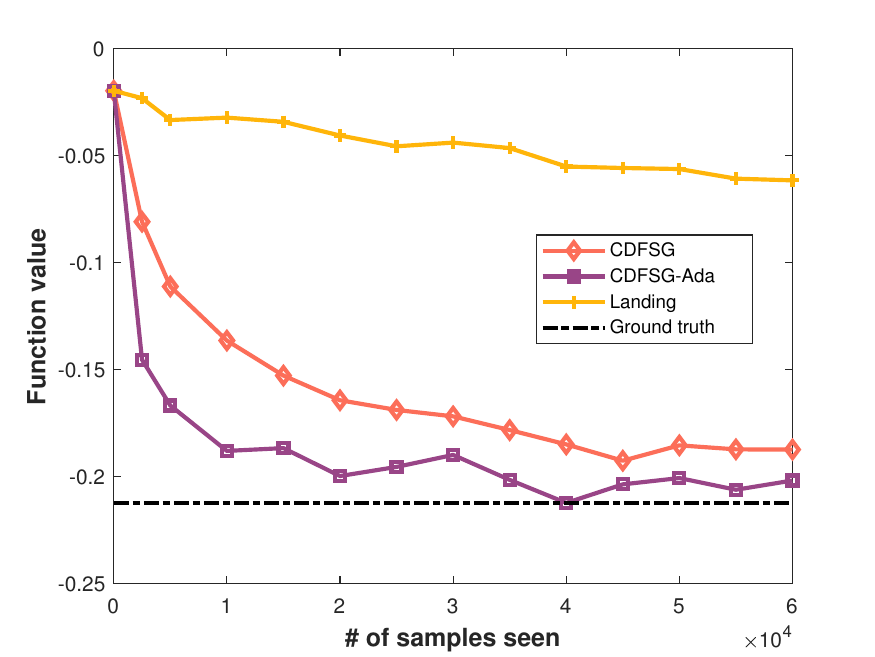}}
    \subfigure[MNIST, $p=5$]{\includegraphics[width=0.22\hsize, height=0.2\hsize]{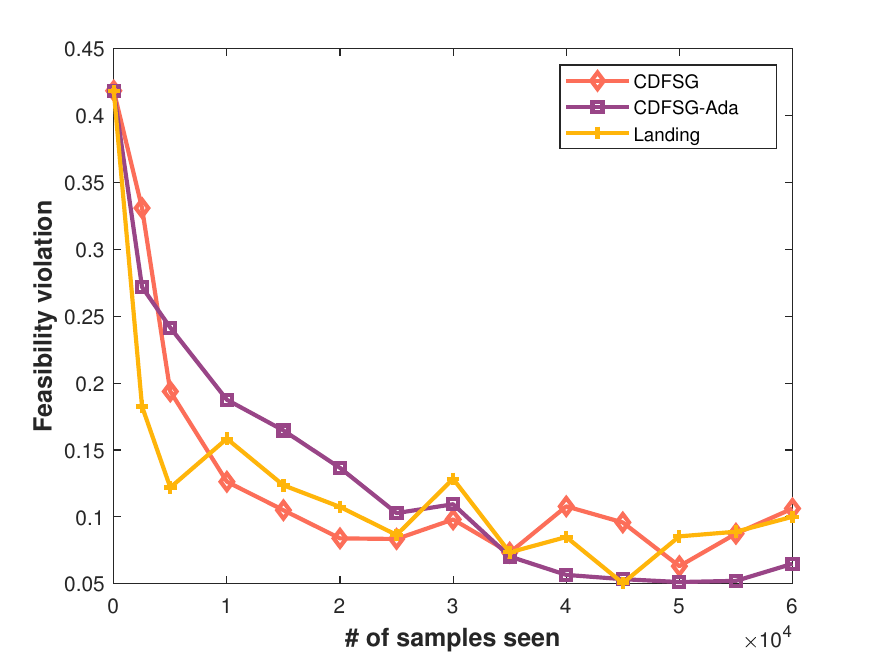}}
     \subfigure[MNIST, $p=10$]{\includegraphics[width=0.22\hsize, height=0.2\hsize]{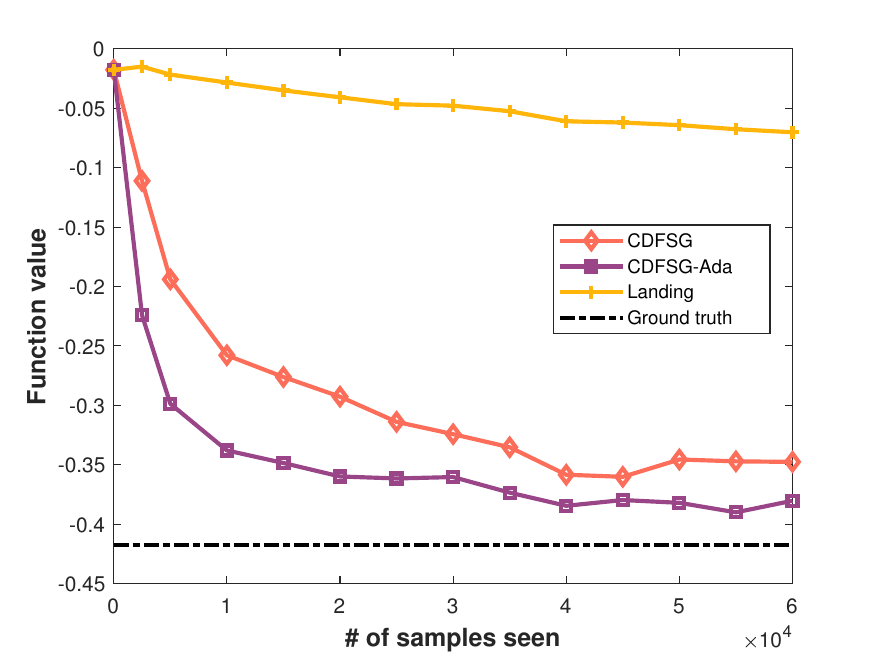}}
    \subfigure[MNIST, $p=10$]{\includegraphics[width=0.22\hsize, height=0.2\hsize]{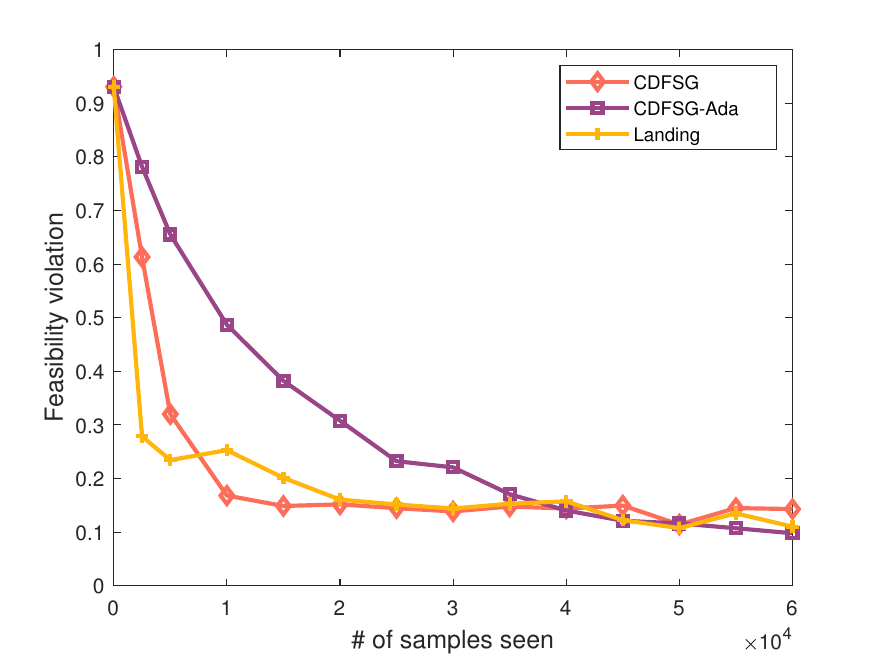}}
    \\
        \subfigure[n-MNIST, $p=5$]{\includegraphics[width=0.22\hsize, height=0.2\hsize]{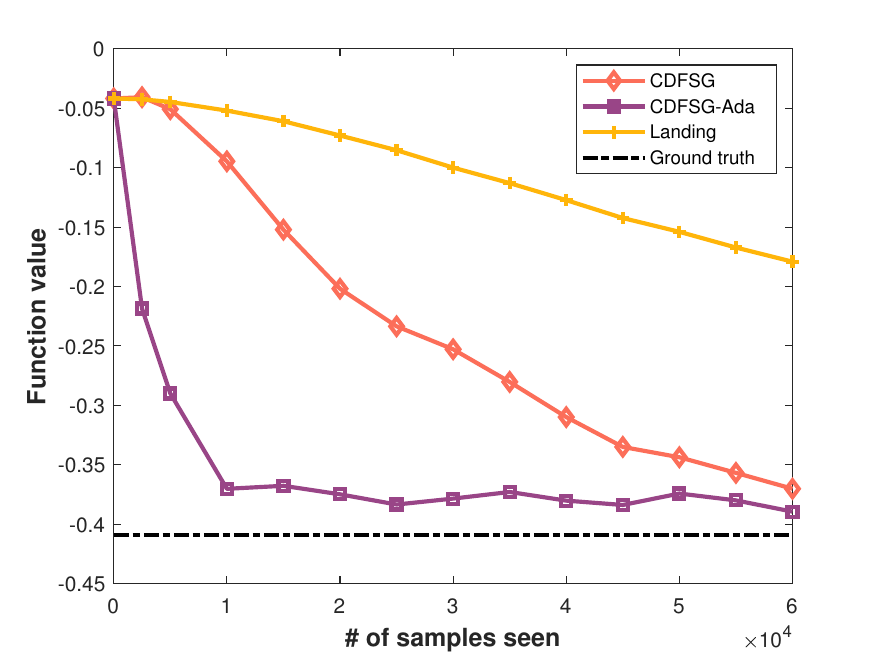}}
    \subfigure[n-MNIST, $p=5$]{\includegraphics[width=0.22\hsize, height=0.2\hsize]{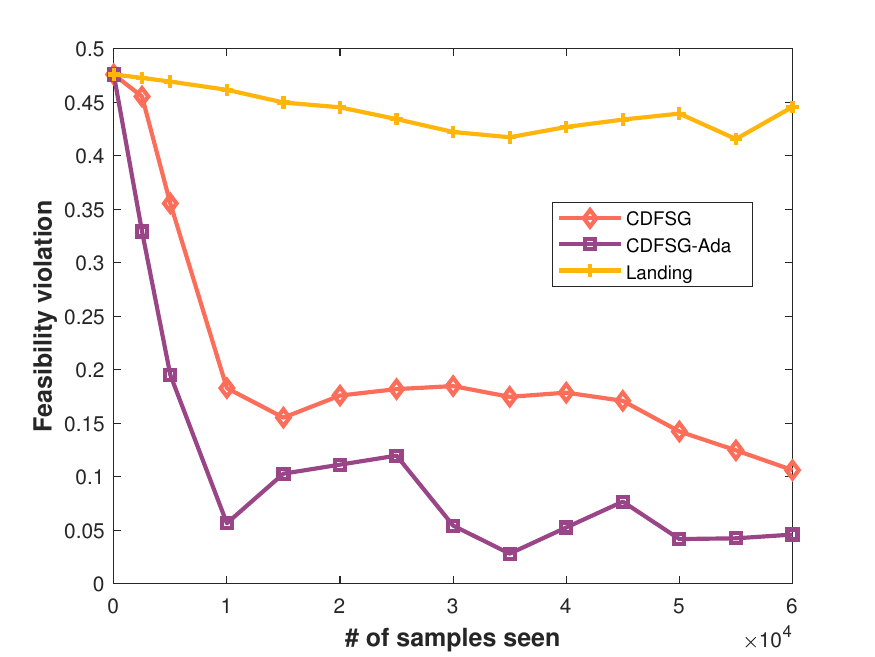}}
    \subfigure[n-MNIST, $p=10$]{\includegraphics[width=0.22\hsize, height=0.2\hsize]{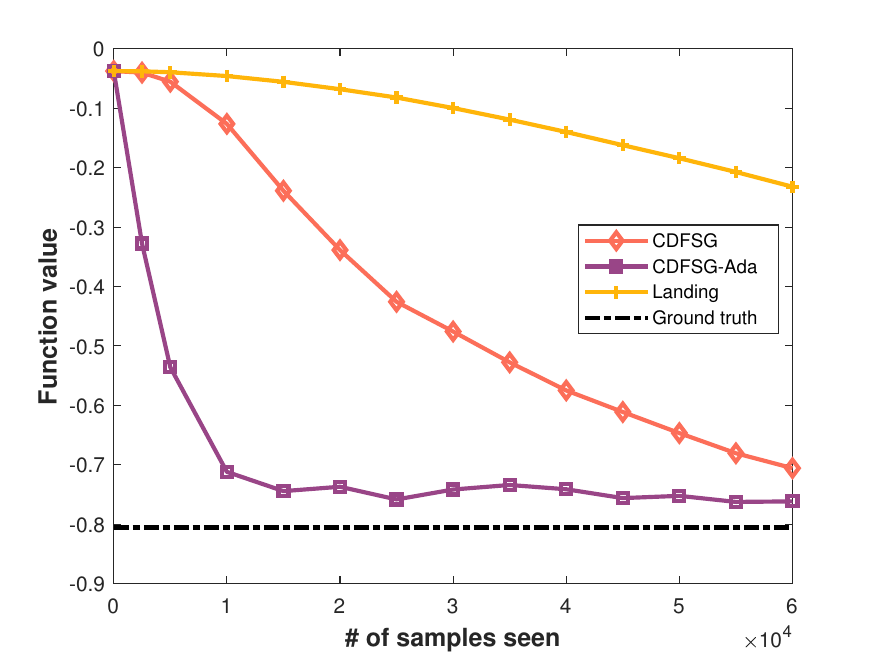}}
     \subfigure[n-MNIST, $p=10$]{\includegraphics[width=0.22\hsize, height=0.2\hsize]{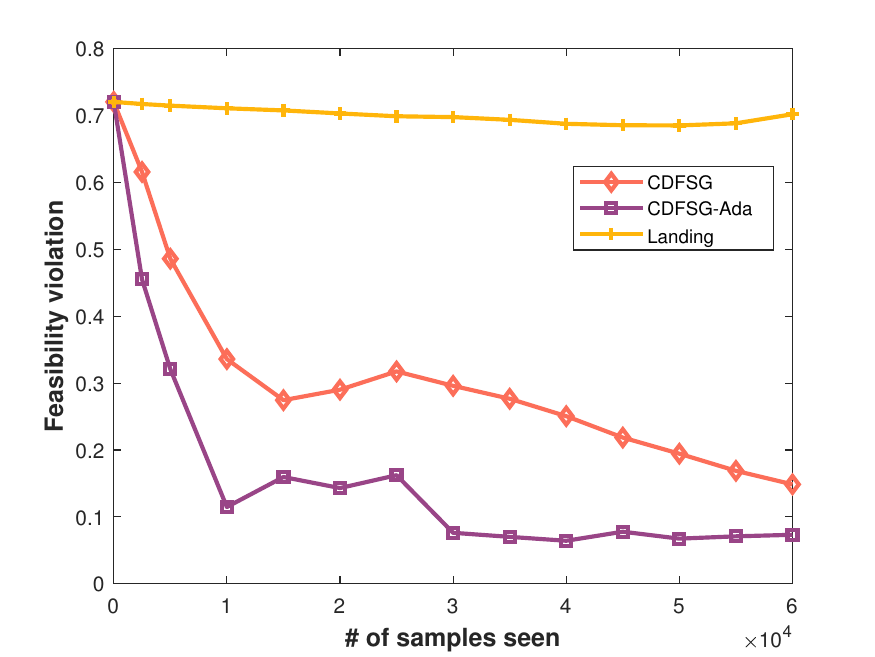}}
    \caption{Function value and feasibility violation variations  of CDFSG, CDFSG-Ada, and Landing with batch size fixed at $100$ on different datasets for CCA with different $p$.}
    \label{fig:fafe}
\end{figure}

\section{Conclusion}
In this paper, we focus on the stochastic optimization problem over the expectation-formulated generalized Stiefel manifold, described by \ref{sogse}, which has important real-world applications in various areas. The presence of stochasticity within the nonconvex constraints brings significant challenges in designing efficient optimization algorithms for solving \ref{sogse}.  To address these challenges, we  propose a constraint dissolving penalty function named \ref{cdfcp} with a novel sixth-order penalty term. The unconstrained minimization of \ref{cdfcp} is proved to be globally equivalent to \ref{sogse}. 

However, the nested expectation structure of \ref{cdfcp} precludes the direct application of classical stochastic gradient descent methods. To overcome this limitation, we employ inner function tracking techniques to develop a stochastic gradient algorithm and its accelerated variant with adaptive step-size strategy. These algorithms achieve a sample complexity of $\mathcal{O}(\varepsilon^{-4})$ for finding an $\varepsilon$-stationary point of \ref{cdfcp}.
Numerical experiments on the GCCA problem illustrate that our proposed algorithms achieve superior performance compared to existing efficient algorithms.  

Several aspects of this work deserve further studies, including investigating  the local convergence rate of our proposed algorithms and designing higher-order approaches by our proposed penalty function. In addition, the extension of our methodologies to nonsmooth optimization problems is of particularly interest as well.

\paragraph{Acknowledgement}
The work of Linshuo Jiang and Xin Liu was supported by the National Key R\&D Program of China (2023YFA1009300). The work of Xin Liu was supported in part by the National Natural Science Foundation of China (12125108, 12226008, 11991021, 11991020, 12021001, 12288201), Key Research Program of Frontier Sciences, Chinese Academy of Sciences (ZDBS-LY-7022), and CAS-Croucher Funding Scheme for Joint Laboratories ``CAS AMSS-PolyU Joint Laboratory of Applied Mathematics: Nonlinear Optimization Theory, Algorithms and Applications''.

\bibliography{SOGSEv0.1}

\begin{thebibliography}{10}

\bibitem{abrudan_steepest_2008}
Traian~E Abrudan, Jan Eriksson, and Visa Koivunen.
\newblock Steepest descent algorithms for optimization under unitary matrix
  constraint.
\newblock {\em IEEE Transactions on Signal Processing}, 56(3):1134--1147, 2008.

\bibitem{absil_trust_2007}
P-A Absil, Christopher~G Baker, and Kyle~A Gallivan.
\newblock Trust-region methods on riemannian manifolds.
\newblock {\em Foundations of Computational Mathematics}, 7:303--330, 2007.

\bibitem{absil_optimization_2008}
P.-A. Absil, R.~Mahony, and R.~Sepulchre.
\newblock {\em Optimization {Algorithms} on {Matrix} {Manifolds}:}.
\newblock Princeton University Press, December 2008.

\bibitem{adler_newton_2002}
Roy~L Adler, Jean-Pierre Dedieu, Joseph~Y Margulies, Marco Martens, and Mike
  Shub.
\newblock Newton's method on riemannian manifolds and a geometric model for the
  human spine.
\newblock {\em IMA Journal of Numerical Analysis}, 22(3):359--390, 2002.

\bibitem{arora_cca_2017}
Raman Arora, Teodor~Vanislavov Marinov, Poorya Mianjy, and Nati Srebro.
\newblock Stochastic approximation for canonical correlation analysis.
\newblock {\em Advances in Neural Information Processing Systems}, 30, 2017.

\bibitem{baker_riemannian_2008}
Christopher~G Baker.
\newblock {\em Riemannian manifold trust-region methods with applications to
  eigenproblems}.
\newblock The Florida State University, 2008.

\bibitem{basu2017learning}
Saikat Basu, Manohar Karki, Sangram Ganguly, Robert DiBiano, Supratik
  Mukhopadhyay, Shreekant Gayaka, Rajgopal Kannan, and Ramakrishna Nemani.
\newblock Learning sparse feature representations using probabilistic quadtrees
  and deep belief nets.
\newblock {\em Neural Processing Letters}, 45:855--867, 2017.

\bibitem{benaim2005stochastic}
Michel Bena{\"\i}m, Josef Hofbauer, and Sylvain Sorin.
\newblock Stochastic approximations and differential inclusions.
\newblock {\em SIAM Journal on Control and Optimization}, 44(1):328--348, 2005.

\bibitem{bendory_nonconvex_2016}
Tamir Bendory and Yonina Eldar.
\newblock Non-convex phase retrieval from stft measurements.
\newblock {\em IEEE Transactions on Information Theory}, PP, 07 2016.

\bibitem{bolte2023subgradient}
J{\'e}r{\^o}me Bolte, Tam Le, and Edouard Pauwels.
\newblock Subgradient sampling for nonsmooth nonconvex minimization.
\newblock {\em SIAM Journal on Optimization}, 33(4):2542--2569, 2023.

\bibitem{Boob2022stochastic}
Digvijay Boob, Qi~Deng, and Guanghui Lan.
\newblock Stochastic first-order methods for convex and nonconvex functional
  constrained optimization.
\newblock {\em Mathematical Programming}, pages 1--65, 2022.

\bibitem{boumal2023introduction}
Nicolas Boumal.
\newblock {\em An introduction to optimization on smooth manifolds}.
\newblock Cambridge University Press, 2023.

\bibitem{boumal_manopt_2014}
Nicolas Boumal, Bamdev Mishra, P-A Absil, and Rodolphe Sepulchre.
\newblock Manopt, a matlab toolbox for optimization on manifolds.
\newblock {\em The Journal of Machine Learning Research}, 15(1):1455--1459,
  2014.

\bibitem{chen_sparse_2013}
Mengjie Chen, Chao Gao, Zhao Ren, and Harrison~H. Zhou.
\newblock Sparse {CCA} via {Precision} {Adjusted} {Iterative} {Thresholding},
  November 2013.
\newblock arXiv:1311.6186 [math, stat].

\bibitem{chen2021solving}
Tianyi Chen, Yuejiao Sun, and Wotao Yin.
\newblock Solving stochastic compositional optimization is nearly as easy as
  solving stochastic optimization.
\newblock {\em IEEE Transactions on Signal Processing}, 69:4937--4948, 2021.

\bibitem{chen2018convergence}
Xiangyi Chen, Sijia Liu, Ruoyu Sun, and Mingyi Hong.
\newblock On the convergence of a class of adam-type algorithms for non-convex
  optimization.
\newblock {\em arXiv preprint arXiv:1808.02941}, 2018.

\bibitem{gao_orthogonalization-free_2022}
Bin Gao, Guanghui Hu, Yang Kuang, and Xin Liu.
\newblock An {Orthogonalization}-{Free} {Parallelizable} {Framework} for
  {All}-{Electron} {Calculations} in {Density} {Functional} {Theory}.
\newblock {\em SIAM Journal on Scientific Computing}, 44(3):B723--B745, June
  2022.

\bibitem{gao_sparse_2021}
Sheng Gao and Zongming Ma.
\newblock Sparse {GCA} and {Thresholded} {Gradient} {Descent}, July 2021.
\newblock arXiv:2107.00371 [cs, stat].

\bibitem{ge2016efficient}
Rong Ge, Chi Jin, Praneeth Netrapalli, Aaron Sidford, et~al.
\newblock Efficient algorithms for large-scale generalized eigenvector
  computation and canonical correlation analysis.
\newblock In {\em International Conference on Machine Learning}, pages
  2741--2750. PMLR, 2016.

\bibitem{ghadimi2020single}
Saeed Ghadimi, Andrzej Ruszczynski, and Mengdi Wang.
\newblock A single timescale stochastic approximation method for nested
  stochastic optimization.
\newblock {\em SIAM Journal on Optimization}, 30(1):960--979, 2020.

\bibitem{hardoon_canonical_2004}
David~R. Hardoon, Sandor Szedmak, and John Shawe-Taylor.
\newblock Canonical {Correlation} {Analysis}: {An} {Overview} with
  {Application} to {Learning} {Methods}.
\newblock {\em Neural Computation}, 16(12):2639--2664, December 2004.

\bibitem{hotelling_cca_1936}
Hotelling Harold.
\newblock Relations between two sets of variables.
\newblock {\em Biometrika}, 28(3):321--377, 1936.

\bibitem{horst1961generalized}
Paul Horst.
\newblock {\em Generalized canonical correlations and their application to
  experimental data}.
\newblock Number~14. Journal of clinical psychology, 1961.

\bibitem{hu2020brief}
Jiang Hu, Xin Liu, Zai-Wen Wen, and Ya-Xiang Yuan.
\newblock A brief introduction to manifold optimization.
\newblock {\em Journal of the Operations Research Society of China},
  8:199--248, 2020.

\bibitem{huang_broyden_2015}
Wen Huang, Kyle~A Gallivan, and P-A Absil.
\newblock A broyden class of quasi-newton methods for riemannian optimization.
\newblock {\em SIAM Journal on Optimization}, 25(3):1660--1685, 2015.

\bibitem{huber1992robust}
Peter~J Huber.
\newblock Robust estimation of a location parameter.
\newblock In {\em Breakthroughs in statistics: Methodology and distribution},
  pages 492--518. Springer, 1992.

\bibitem{iannazzo_r_2018}
Bruno Iannazzo and Margherita Porcelli.
\newblock The riemannian barzilai--borwein method with nonmonotone line search
  and the matrix geometric mean computation.
\newblock {\em IMA Journal of Numerical Analysis}, 38(1):495--517, 2018.

\bibitem{kasai_riemannian_2018}
Hiroyuki Kasai, Hiroyuki Sato, and Bamdev Mishra.
\newblock Riemannian stochastic recursive gradient algorithm.
\newblock In {\em International conference on machine learning}, pages
  2516--2524. PMLR, 2018.

\bibitem{kettenring_canonical_1971}
J.~R. Kettenring.
\newblock Canonical analysis of several sets of variables.
\newblock {\em Biometrika}, 58(3):433--451, 1971.

\bibitem{kingma2014adam}
Diederik Kingma and Jimmy Ba.
\newblock Adam: A method for stochastic optimization.
\newblock {\em arXiv preprint arXiv:1412.6980}, 2014.

\bibitem{le2024nonsmooth}
Tam Le.
\newblock Nonsmooth nonconvex stochastic heavy ball.
\newblock {\em Journal of Optimization Theory and Applications}, pages 1--21,
  2024.

\bibitem{lecun1998mnist}
Yann LeCun.
\newblock The mnist database of handwritten digits.
\newblock {\em http://yann. lecun. com/exdb/mnist/}, 1998.

\bibitem{li2022stochastic}
Zichong Li, Pin-Yu Chen, Sijia Liu, Songtao Lu, and Yangyang Xu.
\newblock Stochastic inexact augmented lagrangian method for nonconvex
  expectation constrained optimization.
\newblock {\em arXiv preprint arXiv:2212.09513}, 2022.

\bibitem{luo_linear_2011}
Dijun Luo, Chris Ding, and Heng Huang.
\newblock Linear discriminant analysis: New formulations and overfit analysis.
\newblock In {\em Proceedings of the AAAI Conference on Artificial
  Intelligence}, volume~25, pages 417--422, 2011.

\bibitem{ma2019proximally}
Runchao Ma, Qihang Lin, and Tianbao Yang.
\newblock Proximally constrained methods for weakly convex optimization with
  weakly convex constraints.
\newblock {\em arXiv preprint arXiv:1908.01871}, 2019.

\bibitem{ma2015finding}
Zhuang Ma, Yichao Lu, and Dean Foster.
\newblock Finding linear structure in large datasets with scalable canonical
  correlation analysis.
\newblock In {\em International conference on machine learning}, pages
  169--178. PMLR, 2015.

\bibitem{meng2021online}
Zihang Meng, Rudrasis Chakraborty, and Vikas Singh.
\newblock An online riemannian pca for stochastic canonical correlation
  analysis.
\newblock {\em Advances in Neural Information Processing Systems},
  34:14056--14068, 2021.

\bibitem{nocedal_numerical_1999}
Jorge Nocedal and Stephen~J Wright.
\newblock {\em Numerical optimization}.
\newblock Springer, 1999.

\bibitem{sato_riemannian_2022}
Hiroyuki Sato.
\newblock Riemannian conjugate gradient methods: General framework and specific
  algorithms with convergence analyses.
\newblock {\em SIAM Journal on Optimization}, 32(4):2690--2717, 2022.

\bibitem{sato_cholesky_2019}
Hiroyuki Sato and Kensuke Aihara.
\newblock Cholesky {QR}-based retraction on the generalized {Stiefel} manifold.
\newblock {\em Computational Optimization and Applications}, 72(2):293--308,
  March 2019.

\bibitem{sato_new_2015}
Hiroyuki Sato and Toshihiro Iwai.
\newblock A new, globally convergent riemannian conjugate gradient method.
\newblock {\em Optimization}, 64(4):1011--1031, 2015.

\bibitem{shustin_preconditioned_2021}
Boris Shustin and Haim Avron.
\newblock Preconditioned {Riemannian} {Optimization} on the {Generalized}
  {Stiefel} {Manifold}, June 2021.
\newblock arXiv:1902.01635 [cs, math].

\bibitem{snoek_mediamill_2006}
Cees~GM Snoek, Marcel Worring, Jan~C Van~Gemert, Jan-Mark Geusebroek, and
  Arnold~WM Smeulders.
\newblock The challenge problem for automated detection of 101 semantic
  concepts in multimedia.
\newblock In {\em Proceedings of the 14th ACM international conference on
  Multimedia}, pages 421--430, 2006.

\bibitem{steel1951minimum}
Robert~GD Steel.
\newblock Minimum generalized variance for a set of linear functions.
\newblock {\em The Annals of Mathematical Statistics}, pages 456--460, 1951.

\bibitem{sorensen_generalized_2021}
Mikael Sørensen, Charilaos~I. Kanatsoulis, and Nicholas~D. Sidiropoulos.
\newblock Generalized {Canonical} {Correlation} {Analysis}: {A} {Subspace}
  {Intersection} {Approach}.
\newblock {\em IEEE Transactions on Signal Processing}, 69:2452--2467, 2021.
\newblock arXiv:2003.11205 [cs, eess, stat].

\bibitem{vary2024optimization}
Simon Vary, Pierre Ablin, Bin Gao, and P-A Absil.
\newblock Optimization without retraction on the random generalized stiefel
  manifold.
\newblock {\em arXiv preprint arXiv:2405.01702}, 2024.

\bibitem{wang2017stochastic}
Mengdi Wang, Ethan~X Fang, and Han Liu.
\newblock Stochastic compositional gradient descent: algorithms for minimizing
  compositions of expected-value functions.
\newblock {\em Mathematical Programming}, 161:419--449, 2017.

\bibitem{wang2017accelerating}
Mengdi Wang, Ji~Liu, and Ethan~X Fang.
\newblock Accelerating stochastic composition optimization.
\newblock {\em Journal of Machine Learning Research}, 18(105):1--23, 2017.

\bibitem{xiao_constraint_2022}
Nachuan Xiao, Xin Liu, and Kim-Chuan Toh.
\newblock Constraint {Dissolving} {Approaches} for {Riemannian} {Optimization},
  May 2022.
\newblock arXiv:2203.10319 [math].

\bibitem{zhang2019stochastic}
Junyu Zhang and Lin Xiao.
\newblock A stochastic composite gradient method with incremental variance
  reduction.
\newblock {\em Advances in Neural Information Processing Systems}, 32, 2019.

\end{thebibliography}
\end{document}